\newtheorem*{previousthmA}{Theorem A}
\newtheorem*{previousthmB}{Theorem B}
\newtheorem*{previousthmC}{Theorem C}
\newtheorem*{previousthmD}{Theorem D}
\newtheorem*{previousthmE}{Theorem E}
\theoremstyle{plain} 
\newtheorem{theorem}{\sc Theorem}[section] 
\newtheorem{lemma}[theorem]{\sc Lemma}
\newtheorem{corollary}[theorem]{\sc Corollary}
\theoremstyle{definition} 
\newtheorem{remark}[theorem]{\sc Remark}
\keywords{}
\subjclass{}
\title[]{The inclusion relation between Sobolev and modulation spaces}
\author{Masaharu Kobayashi and Mitsuru Sugimoto}
\address{Masaharu Kobayashi \\
Department of Mathematics, 
Tokyo University of Science, 
Kagurazaka 1-3, Shinjuku-ku, Tokyo 162-8601, Japan}
\email{kobayashi@jan.rikadai.jp }
\address{Mitsuru Sugimoto \\
Graduate School of Mathematics \\
Nagoya University, \\
Furocho, Chikusa-ku, Nagoya 464-8602, Japan}
\email{sugimoto@math.nagoya-u.ac.jp}
\keywords{}
\subjclass{}
\date{\today}
\begin{document}
\maketitle

\begin{abstract}
The inclusion relations between the $L^p$-Sobolev spaces and
the modulation spaces is determined explicitly.
As an application, mapping properties of unimodular Fourier multiplier
$e^{i|D|^\alpha}$ between $L^p$-Sobolev spaces and modulation spaces
are discussed.
\end{abstract}


\section{Introduction}

The modulation spaces $M^{p,q}_s$ are one of the function spaces
introduced by Feichtinger \cite{Feichtinger} in 1980's 
to measure the decaying and regularity property
of a function or distribution in a
way different from $L^p$-Sobolev spaces $L^p_s$ or  Besov spaces $B^{p,q}_s$.
The precise definitions of these function spaces will be given in
Section \ref{Preliminaries}, but the main idea of modulation spaces is to
consider the space variable and the
variable of its Fourier transform simultaneously,
while they are treated independently in $L^p$-Sobolev spaces and Besov spaces.

Because of this special nature, modulation spaces are now considered
to be suitable spaces in the analysis of 
pseudo-differential operators
after a series of important works \cite{Cordero-Grochenig}, \cite{Grochenig 2006},
\cite{Grochenig-Heil}, \cite{Hei-Ramanathan-Topiwala}, \cite{Tachizawa},
\cite{Toft} 
and so on.
On the other hand, modulation spaces have also remarkable applications
in the analysis of partial differential equations.
For example, the Schr\"odinger and wave propagators, which are not bounded
on neither $L^p$ nor $B^{p,q}_s$, are bounded on $M_s^{p,q}$
(\cite{Benyi et all}).
Modulation spaces are also used as a regularity class
of initial data of the Cauchy problem for nonlinear evolution equations,
and in this way the existence of the solution is shown under very low
regularity assumption for initial data
(see \cite{Wang Huang}, \cite{Wang-Hudzik}, \cite{WZG}).

In the last several years,
many basic properties of modulation spaces are established.
In particular, the inclusion relation between Besov spaces and modulation
spaces has been completely determined.
Let us define the indices $\nu_1(p,q)$ and $\nu_2(p,q)$ 
for $1\leq p,q \leq \infty$ in the following way:
\begin{align*}
\nu_1(p,q)=
\begin{cases}
0 & {\rm if~} (1/p,1/q) \in I^*_1 ~:~ \min(1/p,1/p^\prime) \geq 1/q, \\
1/p+1/q -1 & {\rm if~} (1/p,1/q) \in I^*_2 ~:~  \min(1/q,1/2) \geq 1/p^\prime, \\
-1/p +1/q  & {\rm if~} (1/p,1/q) \in I^*_3  ~:~\min (1/q,1/2) \geq 1/p,
\end{cases}\\
\nu_2(p,q)=
\begin{cases}
0 & {\rm if~} (1/p,1/q) \in I_1 ~:~  \max(1/p,1/p^\prime) \leq 1/q, \\
1/p+1/q -1 & {\rm if~} (1/p,1/q) \in I_2 ~:~  \max(1/q,1/2) \leq 1/p^\prime, \\
-1/p +1/q  & {\rm if~} (1/p,1/q) \in I_3  ~:~\max(1/q,1/2) \leq 1/p,
\end{cases}
\end{align*}
where $1/p+1/p^\prime=1 = 1/q +1/q^\prime.$
We remark $\nu_2(p,q)=-\nu_1(p',q')$.

$$
\unitlength 0.1in
\begin{picture}( 48.2500, 22.0000)( -0.2500,-25.1500)
%
\special{pn 8}%
\special{pa 400 2400}%
\special{pa 400 600}%
\special{fp}%
\special{sh 1}%
\special{pa 400 600}%
\special{pa 380 668}%
\special{pa 400 654}%
\special{pa 420 668}%
\special{pa 400 600}%
\special{fp}%
\special{pa 400 2400}%
\special{pa 2200 2400}%
\special{fp}%
\special{sh 1}%
\special{pa 2200 2400}%
\special{pa 2134 2380}%
\special{pa 2148 2400}%
\special{pa 2134 2420}%
\special{pa 2200 2400}%
\special{fp}%
\special{pa 3000 2400}%
\special{pa 3000 600}%
\special{fp}%
\special{sh 1}%
\special{pa 3000 600}%
\special{pa 2980 668}%
\special{pa 3000 654}%
\special{pa 3020 668}%
\special{pa 3000 600}%
\special{fp}%
\special{pa 3000 2400}%
\special{pa 4800 2400}%
\special{fp}%
\special{sh 1}%
\special{pa 4800 2400}%
\special{pa 4734 2380}%
\special{pa 4748 2400}%
\special{pa 4734 2420}%
\special{pa 4800 2400}%
\special{fp}%
%
\special{pn 8}%
\special{pa 4600 2400}%
\special{pa 4600 800}%
\special{fp}%
\special{pa 4600 800}%
\special{pa 3000 800}%
\special{fp}%
\special{pa 2000 800}%
\special{pa 2000 2400}%
\special{fp}%
\special{pa 2000 800}%
\special{pa 400 800}%
\special{fp}%
\special{pa 400 800}%
\special{pa 1200 1600}%
\special{fp}%
\special{pa 1200 1600}%
\special{pa 2000 800}%
\special{fp}%
\special{pa 1200 1600}%
\special{pa 1200 2400}%
\special{fp}%
%
\special{pn 8}%
\special{pa 3000 2400}%
\special{pa 3800 1600}%
\special{fp}%
\special{pa 3800 1600}%
\special{pa 3800 800}%
\special{fp}%
\special{pa 3800 1600}%
\special{pa 4600 2400}%
\special{fp}%
\put(38.0000,-26.0000){\makebox(0,0){$1/2$}}%
\put(46.0000,-26.0000){\makebox(0,0){$1$}}%
\put(28.0000,-26.0000){\makebox(0,0){$0$}}%
\put(28.0000,-16.0000){\makebox(0,0){$1/2$}}%
\put(28.0000,-8.0000){\makebox(0,0){$1$}}%
\put(28.0000,-4.0000){\makebox(0,0){$1/q$}}%
\put(50.0000,-26.0000){\makebox(0,0){$1/p$}}%
\put(2.0000,-4.0000){\makebox(0,0){$1/q$}}%
\put(2.0000,-8.0000){\makebox(0,0){$1$}}%
\put(2.0000,-16.0000){\makebox(0,0){$1/2$}}%
\put(2.0000,-26.0000){\makebox(0,0){$0$}}%
\put(12.0000,-26.0000){\makebox(0,0){$1/2$}}%
\put(20.0000,-26.0000){\makebox(0,0){$1$}}%
\put(24.0000,-26.0000){\makebox(0,0){$1/p$}}%
\put(12.0000,-12.0000){\makebox(0,0){$I_1$}}%
\put(8.0000,-20.0000){\makebox(0,0){$I_2$}}%
\put(16.0000,-20.0000){\makebox(0,0){$I_3$}}%
\put(37.9000,-20.0000){\makebox(0,0){$I^*_1$}}%
\put(41.9000,-14.0000){\makebox(0,0){$I_2^*$}}%
\put(33.9000,-14.0000){\makebox(0,0){$I^*_3$}}%
\end{picture}%
$$
\vspace{0.5cm}

\noindent
Then the following result is known:
\begin{theorem}[Sugimoto-Tomita \cite{Sugimoto-Tomita}, Toft \cite{Toft}]
\label{BpqMpq}
Let $1 \leq p,q \leq \infty$ and $s \in {\mathbf R}$.
Then we have
\begin{enumerate}
\item[$(1)$] $B^{p,q}_s({\mathbf R}^n) \hookrightarrow M^{p,q}({\mathbf R}^n)$
if and only if $s \geq n \nu_1(p,q);$

\item[$(2)$]  $M^{p,q}({\mathbf R}^n) \hookrightarrow B^{p,q}_s({\mathbf R}^n)$
if and only if $s \leq n \nu_2(p,q)$.

\end{enumerate}

\end{theorem}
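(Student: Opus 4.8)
The plan is to treat the two inclusions as dual to one another and to localise the modulation norm scale by scale. Since $(M^{p,q})^{*}=M^{p',q'}$ and $(B^{p,q}_s)^{*}=B^{p',q'}_{-s}$ for $p,q<\infty$, the inclusion $B^{p,q}_s\hookrightarrow M^{p,q}$ is equivalent to $M^{p',q'}\hookrightarrow B^{p',q'}_{-s}$; combined with the stated identity $\nu_2(p',q')=-\nu_1(p,q)$ this shows that part $(2)$ for $(p',q')$ is exactly the dual of part $(1)$ for $(p,q)$. I would therefore prove only $(1)$ and deduce $(2)$ by duality, handling the non-reflexive endpoints $p,q\in\{1,\infty\}$ by a separate limiting argument.

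For the sufficiency in $(1)$ the heart of the matter is a single-scale estimate: if $g$ has $\widehat g$ supported in a dyadic shell $\{|\xi|\sim R\}$ with $R=2^{j}$, then
\[
\|g\|_{M^{p,q}}\lesssim R^{\,n\nu_1(p,q)}\,\|g\|_{L^p}.
\]
Such a shell meets only $N\sim R^{n}$ of the unit frequency cubes, so $\|g\|_{M^{p,q}}$ is an $\ell^q$ sum of $N$ terms $\|\square_k g\|_{L^p}$. I would read off the three exponents from the elementary embedding $L^p\hookrightarrow M^{p,\max(p,p')}$ together with Hölder's inequality applied to these $N$ terms: in $I^*_1$ one has $q\ge\max(p,p')$, so monotonicity of $\ell^q$ gives $\|g\|_{M^{p,q}}\le\|g\|_{M^{p,\max(p,p')}}\lesssim\|g\|_{L^p}$ (hence $\nu_1=0$); in $I^*_2$ ($p\le2$, $q\le p'$) Hölder gives $\|g\|_{M^{p,q}}\le N^{1/q-1/p'}\|g\|_{M^{p,p'}}\lesssim R^{\,n(1/q-1/p')}\|g\|_{L^p}$; and in $I^*_3$ ($p\ge2$, $q\le p$) it gives $\|g\|_{M^{p,q}}\le N^{1/q-1/p}\|g\|_{M^{p,p}}\lesssim R^{\,n(1/q-1/p)}\|g\|_{L^p}$. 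Grouping the cubes of the modulation norm by dyadic scale, using $\square_k f=\square_k(\Delta_{j-1}+\Delta_j+\Delta_{j+1})f$ when $|k|\sim2^{j}$, and summing the single-scale bound in $\ell^{q}$ over the scales $j$ against the weight $2^{\,jn\nu_1}$ then yields $\|f\|_{M^{p,q}}\lesssim\|f\|_{B^{p,q}_{n\nu_1}}$.

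For the necessity I would exhibit, in each region, a one-parameter family $g_R$ with $\widehat{g_R}$ supported in $\{|\xi|\sim R\}$ that saturates the single-scale estimate as $R\to\infty$; since $\|g_R\|_{B^{p,q}_s}\sim R^{s}\|g_R\|_{L^p}$ for such functions, the ratio $\|g_R\|_{M^{p,q}}/\|g_R\|_{B^{p,q}_s}\sim R^{\,n\nu_1-s}$ blows up as soon as $s<n\nu_1$. In $I^*_2$ I would take a spatially concentrated packet $g_R=R^{n}h(R\,\cdot)$ with $\widehat h$ a fixed bump on a shell, so that $\|g_R\|_{L^p}\sim R^{n/p'}$ while each of the $\sim R^{n}$ cube pieces has $\|\square_k g_R\|_{L^p}\sim1$, giving $\|g_R\|_{M^{p,q}}\sim R^{n/q}$ and ratio $R^{\,n(1/q-1/p')}$. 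In $I^*_3$ I would instead take a diagonal wave-packet sum $g_R=\sum_{\ell}e^{ik_\ell\cdot x}\psi(x-x_\ell)$ with the $\sim R^{n}$ frequencies $k_\ell$ filling the shell and the spatial centres $x_\ell$ so far apart that the supports are disjoint; then $\|g_R\|_{L^p}\sim R^{n/p}$ and $\|g_R\|_{M^{p,q}}\sim R^{n/q}$, giving ratio $R^{\,n(1/q-1/p)}$. The region $I^*_1$ needs only $s\ge0$, which a single packet $e^{ik_0\cdot x}\psi$ already forces.

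I expect the necessity to be the main obstacle, since it is precisely the assertion that the exponent $n\nu_1$ cannot be lowered. The two extremal families are genuinely different in character—one concentrated at a point in space and spread over the shell in frequency, the other spread diagonally in both variables—so locating them and verifying their norms (the disjoint-support computation in $I^*_3$ and the scaling of the concentrated packet in $I^*_2$) is where the real work lies. The sufficiency is comparatively soft once the embedding $L^p\hookrightarrow M^{p,\max(p,p')}$ and its sharpness are in hand, and the endpoint indices $p,q\in\{1,\infty\}$ require the duality step to be replaced by direct estimates.
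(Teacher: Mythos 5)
There is no ``paper's own proof'' to compare against here: Theorem \ref{BpqMpq} is quoted as known, with the proof residing in Sugimoto--Tomita \cite{Sugimoto-Tomita} and Toft \cite{Toft}, so your proposal can only be measured against those cited arguments. Measured that way, your outline is essentially correct and close in spirit to the originals. The single-scale estimate $\|g\|_{M^{p,q}}\lesssim R^{n\nu_1(p,q)}\|g\|_{L^p}$ for shell-supported $g$, obtained from $L^p\hookrightarrow M^{p,\max(p,p')}$ (Lemma \ref{inclusion 1}) plus H\"older over the $\sim R^n$ occupied unit cubes, and its $\ell^q$-summation over scales against the weight $2^{jn\nu_1}$, is the mechanism behind the sufficiency part, and your three exponents check out in all three regions. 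Your two extremal families are also the right ones for necessity, and they are morally the same tests that power the dilation-based necessity proof in \cite{Sugimoto-Tomita} (Lemma \ref{DILATION} of this paper): the concentrated packet $R^n h(R\,\cdot)$ is literally a dilation $U_R h$, and the spatially separated lattice of wave packets is the standard extremizer in $I^*_3$. The norm computations you sketch ($\|g_R\|_{L^p}\sim R^{n/p'}$ resp.\ $R^{n/p}$, $\|g_R\|_{M^{p,q}}\sim R^{n/q}$, $\|g_R\|_{B^{p,q}_s}\sim R^s\|g_R\|_{L^p}$) are correct, provided you separate the frequencies $k_\ell$ by a few units so that $\varphi(D-k_\ell)$ isolates single packets, and control the error in freezing $\widehat h(\xi/R)$ on unit cubes (it is $O(R^{-1})$ in $L^p$).

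The one step I would not let stand as written is the duality reduction of part $(2)$ to part $(1)$. The identification $(M^{p,q})^{*}=M^{p',q'}$ holds only for $p,q<\infty$ (at the endpoints one must pass to the completion $\mathcal{M}^{p,q}$, exactly as this paper is careful to do in its Section 4), and recovering the ``only if'' direction of $(2)$ by dualizing twice also needs a density argument; so your scheme genuinely leaves the cases $p=1$ or $q=1$ of part $(2)$, and all endpoint necessity statements, to an unspecified ``limiting argument.'' The gap is real but is cheaply filled by your own machinery run symmetrically, with no duality at all: for shell-supported $g$ one has $\|g\|_{L^p}\lesssim R^{-n\nu_2(p,q)}\|g\|_{M^{p,q}}$ from the other half of Lemma \ref{inclusion 1}, namely $M^{p,\min(p,p')}\hookrightarrow L^p$, together with H\"older on $N\sim R^n$ terms (e.g.\ $\|a\|_{\ell^{p'}}\le N^{1/p'-1/q}\|a\|_{\ell^q}$ in $I_2$, where $p\ge2$ and $q\ge p'$), and the same two families give the necessity of $s\le n\nu_2(p,q)$ directly: the concentrated packet yields the ratio $R^{s-n\nu_2}$ in $I_2$, the diagonal lattice sum yields it in $I_3$, and a single packet forces $s\le0$ in $I_1$, all valid for $p,q\in\{1,\infty\}$ as well. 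Replace the duality step by this mirrored argument and the proposal becomes a complete proof.
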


As for the inclusion relation between $L^p$-Sobolev spaces and
modulation spaces, the following result (see also \cite{Toft 155})
is immediately obtained from
Theorem \ref{BpqMpq} if we notice the inclusion
property $L^p_{s+\varepsilon}\hookrightarrow
B^{p,q}_s\hookrightarrow L^p_{s-\varepsilon}$ for $\varepsilon>0$
(see, \cite[p.97]{Triebel II}):

\begin{corollary}
\label{LpMpq}
Let $1 \leq p,q \leq \infty$ and $s \in {\mathbf R}$.
Then we have
\begin{enumerate}
\item[$(1)$]  $L^{p}_s({\mathbf R}^n) \hookrightarrow M^{p,q}({\mathbf R}^n)$
if $s > n \nu_1(p,q)$.
Conversely,
if $L^{p}_s({\mathbf R}^n) \hookrightarrow M^{p,q}({\mathbf R}^n)$,
then $s \geq n \nu_1(p,q);$

\item[$(2)$]  $M^{p,q}({\mathbf R}^n) \hookrightarrow L^{p}_s({\mathbf R}^n)$
if $s < n \nu_2(p,q)$.
Conversely, if
$M^{p,q}({\mathbf R}^n) \hookrightarrow L^{p}_s({\mathbf R}^n)$,
then $s \leq n \nu_2(p,q)$. 
\end{enumerate}
\end{corollary}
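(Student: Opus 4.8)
The plan is to deduce all four statements from Theorem~\ref{BpqMpq} by sandwiching the Sobolev spaces between Besov spaces via $L^p_{\sigma+\varepsilon}\hookrightarrow B^{p,q}_\sigma \hookrightarrow L^p_{\sigma-\varepsilon}$ for every $\varepsilon>0$, and then, in the necessary directions, passing to the limit $\varepsilon\to 0$.

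For the sufficiency in $(1)$, given $s>n\nu_1(p,q)$ I would set $\varepsilon = s - n\nu_1(p,q)>0$ and $\sigma = n\nu_1(p,q)$, so that $s=\sigma+\varepsilon$. Then the left half of the sandwich gives $L^p_s = L^p_{\sigma+\varepsilon}\hookrightarrow B^{p,q}_\sigma$, while Theorem~\ref{BpqMpq}$(1)$ gives $B^{p,q}_{n\nu_1(p,q)}\hookrightarrow M^{p,q}$ (the endpoint $\sigma = n\nu_1(p,q)$ being admissible there). Composing yields $L^p_s\hookrightarrow M^{p,q}$. The sufficiency in $(2)$ is entirely parallel: given $s<n\nu_2(p,q)$ I would put $\varepsilon = n\nu_2(p,q)-s>0$ and $\sigma = n\nu_2(p,q)=s+\varepsilon$, use Theorem~\ref{BpqMpq}$(2)$ for $M^{p,q}\hookrightarrow B^{p,q}_{n\nu_2(p,q)}$, and then the right half of the sandwich $B^{p,q}_{\sigma}\hookrightarrow L^p_{\sigma-\varepsilon}=L^p_s$.

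For the necessity in $(1)$, I would assume $L^p_s\hookrightarrow M^{p,q}$. For any $\varepsilon>0$, applying the sandwich with $\sigma = s+\varepsilon$ gives $B^{p,q}_{s+\varepsilon}\hookrightarrow L^p_{s}\hookrightarrow M^{p,q}$, whence by the ``only if'' part of Theorem~\ref{BpqMpq}$(1)$ one obtains $s+\varepsilon\geq n\nu_1(p,q)$; letting $\varepsilon\to 0$ gives $s\geq n\nu_1(p,q)$. The necessity in $(2)$ runs the same way: from $M^{p,q}\hookrightarrow L^p_s$ together with $L^p_s\hookrightarrow B^{p,q}_{s-\varepsilon}$ (the sandwich with $\sigma = s-\varepsilon$) one gets $M^{p,q}\hookrightarrow B^{p,q}_{s-\varepsilon}$, so the ``only if'' part of Theorem~\ref{BpqMpq}$(2)$ forces $s-\varepsilon\leq n\nu_2(p,q)$ for all $\varepsilon>0$, hence $s\leq n\nu_2(p,q)$.

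The argument is essentially bookkeeping, so I do not expect a genuine obstacle. The only delicate point, worth stating explicitly, is that the $\varepsilon$-loss in the Besov sandwich is irreversible: it lets me reach every $s$ lying strictly on the correct side of the threshold, but it can never recover the endpoint $s=n\nu_1(p,q)$ (resp.\ $s=n\nu_2(p,q)$). This is precisely why the sufficient and necessary conditions in the corollary fail to meet, and closing that endpoint gap lies beyond the reach of this soft method; it requires the sharper analysis carried out in the main body of the paper.
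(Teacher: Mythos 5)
Your argument is correct and is exactly the paper's own proof: the paper obtains Corollary \ref{LpMpq} ``immediately'' from Theorem \ref{BpqMpq} via the same sandwich $L^p_{\sigma+\varepsilon}\hookrightarrow B^{p,q}_{\sigma}\hookrightarrow L^p_{\sigma-\varepsilon}$ (citing Triebel), with the necessary directions following by letting $\varepsilon\to 0$ just as you do. Your closing remark that the irreversible $\varepsilon$-loss is what leaves the endpoint cases $s=n\nu_1(p,q)$, $s=n\nu_2(p,q)$ open also matches the paper, which resolves precisely that gap in Theorems \ref{MAIN THEOREM'} and \ref{MAIN THEOREM}.
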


But in Corollary \ref{LpMpq},
there still remains a question whether the critical case
$s=n\nu_1(p,q)$ or $s=n\nu_2(p,q)$ is sufficient or not for the inclusion.
The objective of this paper is to answer this basic question
and complete the picture of inclusion relations between the $L^p$-Sobolev
spaces and the modulation spaces.
The following theorems are our main results:

\begin{theorem}\label{MAIN THEOREM'}
Let $1 \leq p,q \leq \infty$ and $s \in {\mathbf R}.$
Then $ L^p_s({\mathbf R}^n) \hookrightarrow M^{p,q}({\mathbf R}^n)$
if and only if one of the following conditions is satisfied$:$
\begin{enumerate}
\item[$(1)$]  $q \geq  p> 1$ and $s \geq n \nu_1(p,q);$

\item[$(2)$]  $p>q$ and $s> n \nu_1(p,q);$

\item[$(3)$]  $p=1, q=\infty$, and $s \geq n \nu_1(1,\infty);$

\item[$(4)$]  $p=1, q \not=\infty$ and $s> n \nu_1(1,q)$.

\end{enumerate}
\end{theorem}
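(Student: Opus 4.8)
By Corollary~\ref{LpMpq}, the inclusion already holds for $s>n\nu_1(p,q)$ and fails for $s<n\nu_1(p,q)$, so the entire content of the theorem is the behaviour at the critical exponent $s=n\nu_1(p,q)$; conditions (1),(3) assert that this value is admissible, while (2),(4) assert that it is not. Fix a partition of unity $\sum_{k\in\mathbf Z^n}\phi_k\equiv1$ with $\phi_k(\xi)=\phi(\xi-k)$ and $\operatorname{supp}\phi\subset[-1,1]^n$, so that $\|f\|_{M^{p,q}}\approx(\sum_k\|\phi_k(D)f\|_{L^p}^q)^{1/q}$. Writing $g=\langle D\rangle^s f$ and using $\phi_k(\xi)\langle\xi\rangle^{-s}=\langle k\rangle^{-s}m_k(\xi)$, where $\{m_k\}$ are symbols supported on fixed cubes with $k$-uniform derivative bounds, a uniform multiplier estimate reduces the critical inclusion to the weighted cube-sum inequality
$$\Big(\sum_{k\in\mathbf Z^n}\langle k\rangle^{-sq}\,\|\phi_k(D)g\|_{L^p}^q\Big)^{1/q}\lesssim\|g\|_{L^p},\qquad s=n\nu_1(p,q).$$

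The endpoint (3) is immediate: $\|f\|_{M^{1,\infty}}=\sup_k\|\phi_k(D)f\|_{L^1}\lesssim\|f\|_{L^1}$, because the convolution kernels of $\phi_k(D)$ have $k$-uniform $L^1$-norm. For $1<p<\infty$ I would next settle the range $q\ge\max(p,2)$, which a computation from the definition of $\nu_1$ shows contains all of $I_1^*$ and the part of $I_3^*$ relevant to (1), namely $q=p\ge2$. There the square-function characterization of $L^p_s$ gives $L^p_s\hookrightarrow B^{p,q}_s$ (valid precisely when $q\ge\max(p,2)$), and Theorem~\ref{BpqMpq}(1) gives $B^{p,q}_s\hookrightarrow M^{p,q}$ at $s=n\nu_1$; composing yields the critical inclusion. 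Concretely this amounts to grouping the cubes into dyadic shells $\{|k|\sim2^j\}$, applying the Plancherel--Polya--Nikolskii annulus estimate $(\sum_{|k|\sim2^j}\|\phi_k(D)h\|_{L^p}^q)^{1/q}\lesssim2^{jn\nu_1}\|h\|_{L^p}$ underlying Theorem~\ref{BpqMpq}, and summing in $j$.

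The crux, and what I expect to be the main obstacle, is the complementary range $1<p\le q<2$, which lies in $I_2^*$ with $\nu_1=1/p+1/q-1>0$ and where $q<\max(p,2)$, so the Besov embedding is lost and the square-function structure of $L^p_s$ must be retained. Since $q\ge p$, Minkowski's inequality moves the $\ell^q$-sum inside the $L^p$-norm, reducing matters to the vector-valued bound $\|(\sum_k\langle k\rangle^{-sq}|\phi_k(D)g|^q)^{1/q}\|_{L^p}\lesssim\|g\|_{L^p}$, that is, the $L^p\to L^p(\ell^q)$ boundedness of the $\ell^q$-valued multiplier $(\langle k\rangle^{-s}\phi_k)_k$. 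Because this multiplier is organized on unit cubes rather than dyadically, it is not of H\"ormander type when $s$ is small, so classical Littlewood--Paley theory does not apply and a dedicated estimate is needed, exploiting the bounded overlap of the $\phi_k$, the $k$-uniform $L^p$-boundedness of $\phi_k(D)$, and crucially the strict positivity $s>0$, which makes the shell weights $\langle k\rangle^{-s}$ summable. By complex interpolation in $q$ against the already-settled value $q=2$, everything reduces to the diagonal endpoint $q=p$, i.e. to $L^p_{n(2/p-1)}\hookrightarrow M^{p,p}$ for $1<p<2$, which I would prove directly; that the corresponding inequality diverges at $p=1$ (where $\sum_k\langle k\rangle^{-n}=\infty$) is exactly what forces the exclusion $p>1$ in (1) and the strictness in (4).

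For (2) and (4) I must exhibit $f\in L^p_{n\nu_1}\setminus M^{p,q}$. The natural candidate is a lacunary superposition $f=\sum_j c_j h_j$ in which $h_j$ has frequency support in the shell $\{|\xi|\sim2^j\}$ and saturates the annulus estimate above, the $h_j$ are separated in space so that the two norms decouple, and $c_j=2^{-jn\nu_1}j^{-\beta}$. For $1<p$ the Littlewood--Paley characterization then gives $\|f\|_{M^{p,q}}^q\approx\sum_j j^{-\beta q}$ and $\|f\|_{L^p_{n\nu_1}}^p\approx\sum_j j^{-\beta p}$, so any $\beta\in(1/p,\,1/q]$ makes the second series converge while the first diverges; such $\beta$ exist precisely when $p>q$, which is case (2). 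The endpoint $p=1$ (case 4) requires a separate, more delicate computation, since the square-function identity is unavailable there and $\|f\|_{L^1_{n\nu_1}}$ must be estimated directly through $\|\langle D\rangle^{n\nu_1}h_j\|_{L^1}$; the same threshold mechanism produces the divergence, consistent with the failure recorded in the previous paragraph.
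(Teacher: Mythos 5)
Your reduction to the critical exponent, the treatment of endpoint (3), and the Besov-embedding route for $q\ge\max(p,2)$ are all sound, and the interpolation-in-$q$ reduction of the remaining sufficiency range $1<p\le q<2$ to the diagonal endpoint $L^p_{n(2/p-1)}\hookrightarrow M^{p,p}$, $1<p<2$, is consistent (the critical line $s=n(1/p+1/q-1)$ is affine in $1/q$). But that diagonal endpoint, which you defer with ``which I would prove directly,'' is the actual crux of the theorem, and the mechanism you sketch for it cannot work: at $s=n(2/p-1)$ one has $sp=n(2-p)<n$, so $\sum_{k\in\mathbf Z^n}\langle k\rangle^{-sp}$ diverges for \emph{every} $p\in(1,2)$, not just at $p=1$; bounded overlap plus the $k$-uniform $L^p$-boundedness of $\phi_k(D)$ therefore loses a logarithm throughout the range (even at $p=2$, $s=0$, the weights are not summable --- it is Plancherel orthogonality that saves the estimate). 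Your diagnosis that ``$\sum_k\langle k\rangle^{-n}=\infty$ at $p=1$ is exactly what forces the exclusion $p>1$'' misattributes the dichotomy. What actually rescues $p>1$ in the paper is a genuinely nontrivial input: the paper proves Theorem \ref{MAIN THEOREM'} by dualizing Theorem \ref{MAIN THEOREM}, whose critical sufficiency (Lemma \ref{S-LEMMA 1}) is obtained by complex interpolation between $L^2=M^{2,2}$ and the \emph{local Hardy space} endpoint $h^1\hookrightarrow M^{1,q}_{-n/q}$ of Kobayashi--Miyachi--Tomita (Lemma \ref{LOCAY HARDY h1}), using $(h^1,L^2)_{[\theta]}=L^p$; replacing $L^1$ by $h^1$ is precisely what restores the endpoint your weight-summation heuristic cannot reach. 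Without this (or an equivalent orthogonality input) your reduction has nothing to stand on at $q=p$.

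There is a second structural gap at case (4). For $p=1\le q$ the lacunary threshold mechanism provably cannot produce the critical failure: if both norms decouple over the dyadic pieces as in your construction, you would need $\sum_j j^{-\beta}<\infty$ and $\sum_j j^{-\beta q}=\infty$, i.e.\ $\beta>1$ and $\beta q\le 1$, impossible for $q\ge 1$ (this is just $\ell^1\hookrightarrow\ell^q$). The failure of $L^1_{n/q}\hookrightarrow M^{1,q}$ is a \emph{within-shell} logarithmic phenomenon, not an across-shell lacunary one, and the paper again imports it from Kobayashi--Miyachi--Tomita: $L^1\hookrightarrow M^{1,q}_s$ only if $s<-n/q$, the second half of Lemma \ref{LOCAY HARDY h1} (applied in the paper, after dualizing through $\mathcal M^{\infty,q}$, to the $p=\infty$ case of Theorem \ref{MAIN THEOREM}). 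By contrast, your counterexample scheme for case (2) ($p>q$) is sound in outline and comparable to the paper's necessity arguments --- which instead dualize and discretize: Lemmas \ref{Pre N-LEMMA 1} and \ref{Pre N-LEMMA 2} reduce $M^{p,q}_s\hookrightarrow L^p$ to sequence-space inequalities via lattices of modulated or dilated bumps, with a logarithmic test sequence in Lemma \ref{N-LEMMA 2} --- though you would still need to exhibit region-specific saturating $h_j$ (they differ in $I_2^*$ and $I_3^*$) and control the tails under spatial separation. In short: the two places where your proposal says ``prove directly'' or ``same mechanism'' are exactly the two places where the paper has to invoke the Hardy-space results of \cite{Kobayashi Miyachi Tomita}, and as written the proposal does not close them.
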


\begin{theorem}\label{MAIN THEOREM}
Let $1 \leq p,q \leq \infty$ and $s \in {\mathbf R}.$
Then $M^{p,q}({\mathbf R}^n) \hookrightarrow L^p_s({\mathbf R}^n)$
if and only if one of the following conditions is satisfied$:$
\begin{enumerate}
\item[$(1)$]  $q \leq p< \infty$ and $s \leq n \nu_2(p,q);$

\item[$(2)$]  $p<q$ and $s< n \nu_2(p,q);$
 
\item[$(3)$]  $p=\infty, q=1$, and $s \leq n \nu_2(\infty,1);$

\item[$(4)$]  $p=\infty, q \not=1$, and $s< n \nu_2(\infty,q)$.

\end{enumerate}
\end{theorem}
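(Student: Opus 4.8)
The plan is to deduce Theorem~\ref{MAIN THEOREM} from Theorem~\ref{MAIN THEOREM'} by duality, the bridge being the three facts $(M^{p,q})^{*}=M^{p',q'}$ for $1\le p,q<\infty$, $(L^{p}_{s})^{*}=L^{p'}_{-s}$ for $1\le p<\infty$, and the identity $\nu_2(p,q)=-\nu_1(p',q')$ already recorded. Concretely, for $f,h\in\mathcal S({\mathbf R}^n)$ one has the pairing bound $|\langle f,h\rangle|\le\|f\|_{L^{p}_{s}}\,\|h\|_{L^{p'}_{-s}}$. Hence, if $1\le p,q<\infty$ and $L^{p'}_{-s}\hookrightarrow M^{p',q'}$, then the unit ball of $L^{p'}_{-s}$ is contained in a fixed multiple of the unit ball of $M^{p',q'}$, and taking the supremum of $|\langle f,h\rangle|$ over these balls yields $\|f\|_{L^{p}_{s}}\le C\|f\|_{(M^{p',q'})^{*}}=C\|f\|_{M^{p,q}}$, i.e.\ $M^{p,q}\hookrightarrow L^{p}_{s}$. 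Reading the same chain backwards (now using $(M^{p,q})^{*}=M^{p',q'}$ and the pairing bound) shows conversely that $M^{p,q}\hookrightarrow L^{p}_{s}$ forces $L^{p'}_{-s}\hookrightarrow M^{p',q'}$. Thus, \emph{for $1\le p,q<\infty$}, the embedding $M^{p,q}\hookrightarrow L^{p}_{s}$ is equivalent to $L^{p'}_{-s}\hookrightarrow M^{p',q'}$.

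It then remains to run the index bookkeeping. Feeding the exponents $(p',q')$ and level $-s$ into Theorem~\ref{MAIN THEOREM'} and using $q'\ge p'\Leftrightarrow q\le p$, $p'>q'\Leftrightarrow p<q$, $p'=1\Leftrightarrow p=\infty$, $q'=\infty\Leftrightarrow q=1$, together with $-s\ge n\nu_1(p',q')\Leftrightarrow s\le n\nu_2(p,q)$ (and likewise for the strict inequalities), carries conditions $(1)$--$(4)$ of Theorem~\ref{MAIN THEOREM'} \emph{exactly} onto conditions $(1)$--$(4)$ of Theorem~\ref{MAIN THEOREM}, preserving the distinction between the critical ($\le$) and strict ($<$) cases. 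Since $q\le p<\infty$ in case $(1)$ already forces $q<\infty$, and $p<q<\infty$ covers the finite part of case $(2)$, the duality above settles all of Theorem~\ref{MAIN THEOREM} in the range $1\le p,q<\infty$.

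The cases outside this range --- namely $q=\infty$ inside $(2)$, and $p=\infty$ in $(3)$ and $(4)$ --- must be treated by hand, because there the duality $(M^{p,q})^{*}=M^{p',q'}$ or $(L^{p}_{s})^{*}=L^{p'}_{-s}$ breaks down. Here I would argue as follows. The subcritical parts $s<n\nu_2(p,q)$ of $(2)$, $(3)$ and $(4)$ are already supplied by Corollary~\ref{LpMpq}, so only critical regularity $s=n\nu_2(p,q)$ is at stake. The single positive critical endpoint is case $(3)$, where $\nu_2(\infty,1)=0$ and the required embedding is the classical $M^{\infty,1}({\mathbf R}^n)\hookrightarrow L^{\infty}({\mathbf R}^n)$. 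For the negative critical endpoints --- the failure of $M^{p,\infty}\hookrightarrow L^{p}_{n\nu_2(p,\infty)}$ inside $(2)$ and of $M^{\infty,q}\hookrightarrow L^{\infty}_{n\nu_2(\infty,q)}$ with $q\ne1$ in $(4)$ --- I would exhibit explicit counterexamples, e.g.\ suitably weighted sums of time--frequency shifts of a fixed Gaussian window whose frequencies run over a lattice, arranged so that the modulation norm (an $\ell^{q}$ average of $L^{p}$ pieces over the lattice) stays bounded while the $L^{p}_{s}$ norm at $s=n\nu_2$ diverges; the very failure of $\mathcal S$ to be dense in these endpoint spaces is what leaves room for such examples.

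The main obstacle I anticipate is precisely this endpoint analysis. The interior reduction is essentially formal once the two duality identities are granted, but at $p=\infty$ or $q=\infty$ no abstract transfer principle is available, and the critical embeddings and non-embeddings must be established directly; calibrating the coefficients and lattice spacing in the counterexamples so that they are sharp exactly at $s=n\nu_2(p,q)$ is the delicate point.
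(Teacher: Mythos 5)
Your proposal has a structural flaw that undermines it entirely: it is circular relative to this paper. You deduce Theorem~\ref{MAIN THEOREM} from Theorem~\ref{MAIN THEOREM'} by duality, but the paper never proves Theorem~\ref{MAIN THEOREM'} independently --- it is explicitly introduced as ``just the dual statement of Theorem~\ref{MAIN THEOREM}'', obtained by exactly the duality transfer you describe (via $(\mathcal{M}^{p,q}_s)^{*}=\mathcal{M}^{p',q'}_{-s}$, $(L^p_s)^{*}=L^{p'}_{-s}$ and $\nu_2(p,q)=-\nu_1(p',q')$). Since that transfer is an equivalence, invoking Theorem~\ref{MAIN THEOREM'} is invoking the statement to be proven; your ``interior reduction'' is index bookkeeping that moves the problem from $(p,q)$ to $(p',q')$ without proving anything. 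The entire substance of the theorem lies precisely in the two critical-line ingredients that your plan leaves unsupplied. For sufficiency at the critical regularity in case $(1)$ with $2<p<\infty$, $p'\le q\le p$, the paper uses Lemma~\ref{S-LEMMA 1}, i.e.\ complex interpolation between $L^2=M^{2,2}$ and the Kobayashi--Miyachi--Tomita embedding $h^1\hookrightarrow M^{1,q}_{-n/q}$ (Lemma~\ref{LOCAY HARDY h1}); your proposal contains no substitute for this (the only positive endpoint you verify directly is the easy $M^{\infty,1}\hookrightarrow L^{\infty}$).

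For necessity, the failure of the embedding exactly at $s=n\nu_2(p,q)$ when $p<q$ or ($p=\infty$, $q\ne 1$) is the delicate point, and you only gesture at it (``suitably weighted sums of time--frequency shifts \dots arranged so that \dots the $L^p_s$ norm diverges''). The paper does construct such examples, but the work is nontrivial: Lemmas~\ref{Pre N-LEMMA 1} and \ref{Pre N-LEMMA 2} discretize the embeddings into sharp sequence-space inequalities using lattice sums of modulated bumps, and Lemmas~\ref{N-LEMMA 1} and \ref{N-LEMMA 2} defeat the critical case with logarithmically corrected power sequences such as $c_k=|k|^{-n/p}(\log|k|)^{-(1+\varepsilon)/p}$ --- the calibration you yourself flag as ``the delicate point'' but do not carry out. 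Note also that even the paper does not attack all endpoint ranges directly: for $2\le p$ it dualizes $\mathcal{M}^{p,q}\hookrightarrow M^{p,q}\hookrightarrow L^p_s$ to an embedding $L^{p'}\hookrightarrow M^{p',q'}_{-s}$ and applies Lemma~\ref{N-LEMMA 2} there, and for $p=\infty$ it reduces to $L^1\hookrightarrow M^{1,q'}_s$ and the strict-inequality part of Lemma~\ref{LOCAY HARDY h1}; so partial duality is indeed the right tool at $p=\infty$ (where you claimed duality ``breaks down''), via the one-sided adjoint trick through $\mathcal{M}^{\infty,q}$ rather than a full dual-pair equivalence. To repair your proposal you would have to replace the citation of Theorem~\ref{MAIN THEOREM'} with independent proofs of the analogues of Lemmas~\ref{S-LEMMA 1}, \ref{N-LEMMA 1} and \ref{N-LEMMA 2}; as written, the proposal proves only that Theorems~\ref{MAIN THEOREM'} and \ref{MAIN THEOREM} are equivalent, which the paper already observes in one line.
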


It should be mentioned that
Kobayashi-Miyachi-Tomita \cite{Kobayashi Miyachi Tomita}
determines the inclusion relation between modulation spaces $M_s^{p,q}$
and local Hardy spaces $h^p$ for $0<p\le 1$.
Our main results extend this result to the case $p>1$
since we have $h^p=L^p$ then.
As a matter of fact, the proof of Theorems \ref{MAIN THEOREM'}
and \ref{MAIN THEOREM} 
heavily depends on the results and arguments established in 
\cite{Kobayashi Miyachi Tomita}.


As an application of our main theorems, we also consider 
mapping properties of unimodular Fourier multiplier
$e^{i|D|^\alpha}, \alpha \geq 0$ , which is a generalization
of wave ($\alpha=1$) and Schr\"odinger ($\alpha=2$) propagators.
See Corollaries \ref{cor5.2} and \ref{MAIN THEOREM 4} in Section 5.
As Theorem A and Theorem B there say, the operator
$e^{i|D|^\alpha}$ ($0\leq\alpha\leq2$) is bounded on modulation
spaces while not on $L^p$-Sobolev spaces.
Theorems \ref{MAIN THEOREM'} and \ref{MAIN THEOREM} help us to
understand what happen if we consider the operator
between $L^p$-Sobolev spaces and modulation spaces.

We explain the organization of this paper.
After the next preliminary section
devoted to the definitions and basic properties
of function spaces treated in this paper,
we give a proof of Theorem \ref{MAIN THEOREM} in Sections 3 and 4.
We remark that Theorem \ref{MAIN THEOREM'} is just the dual statement of
Theorem \ref{MAIN THEOREM}.
In Section 5, we consider mapping properties of unimodular
Fourier multipliers between $L^p$-Sobolev spaces and modulation spaces,
as well as those of invertible pseudo-differential operators.

\section{Preliminaries} \label{Preliminaries}
\subsection{Basic notation}
The following notation will be used throughout this article.
We write ${\mathcal{S}}({\mathbf{R}}^n)$ to denote
the Schwartz space of all
complex-valued rapidly decreasing infinitely differentiable
functions on ${\mathbf{R}}^n$  and 
${\mathcal{S}}^{\prime}({\mathbf{R}}^n)$
to denote the space of tempered distributions on
${\mathbf R}^n$, i.e.,
the topological dual of
${\mathcal{S}}({\mathbf{R}}^n)$.
The Fourier transform is defined by
$\widehat{f}(\xi)  = \int_{{\mathbf R}^n} f(x) 
e^{- i x \cdot \xi}dx$ and the
inverse Fourier transform by ${f}^\vee(x) =(2 \pi)^{-n} \widehat{f}(-x)$.  
We define
$$
||f||_{L^p}= \Big( \int_{{\mathbf{R}}^n}|f(x)|^p dx \Big)^{1/p}
$$
for $1 \leq p < \infty$
and $||f||_{L^{\infty}}= {\rm ess.}\sup_{x \in
{\mathbf{R}}^n}|f(x)|$.
We also define the $L^p$-Sobolev norm  $\| \cdot \|_{L^p_s}$ by 
$$
\| f \|_{L^p_s} = \| ( \langle \cdot \rangle^{s} \widehat{f}(\cdot) )^\vee \|_{L^p}\quad
{\rm with} \quad
\langle \cdot \rangle = (1+|\cdot|^2)^{1/2}.
$$
Let $(X,\| \cdot \|_X)$ and $(Y, \| \cdot \|_Y)$ be two Banach spaces,
which include ${\mathcal S}({\mathbf R}^n)$,
respectively.
We say that an operator $T$ from $X$ to $Y$
is bounded
if there exists a constant $C>0$
such that $\| Tf \|_Y \leq C \| f \|_X$
for all $f \in {\mathcal S}({\mathbf R}^n)$, and we 
set
$$
\| T \|_{X \to Y} = \sup \{ \| Tf \|_Y ~|~ f \in {\mathcal S}({\mathbf R}^n),~ \|f \|_X = 1 \}.
$$
We use the notation $I \lesssim J$ if $I$
is bounded by a constant times $J$,
and we denote $I \approx J$ if
$I \lesssim J$ and $J \lesssim I$.

\subsection{Modulation spaces}
We recall the modulation spaces.
Let $1 \leq p,q \leq  \infty, s \in {\mathbf R} $ and 
$\varphi \in {\mathcal S}({\mathbf R}^n)$
be such that
\begin{equation}
{\rm supp~} \varphi \subset [-1,1]^n
\quad {\rm and~} \quad \sum_{k \in {\mathbf Z}^n} 
\varphi (\xi -k)=1~{\rm for~all~}\xi \in {\mathbf R}^n. \label{window function}
\end{equation}
 Then  the modulation space
$M^{p,q}_s({\mathbf{R}}^n)$ consists of  all tempered
distributions $f
\in {\mathcal{S}}^{\prime}({\mathbf{R}}^n)$ such that the norm
$$
||f||_{M^{p,q}_s} = \Big( \sum_{k \in {\mathbf{Z}}^n}
 \langle k \rangle^{sq} 
 \Big(
\int_{{\mathbf{R}}^n} \big| \varphi(D-k)f (x) \big|^p dx
\Big)^{q/p} \Big)^{1/q} \label{q-n1}
$$
is finite,
with obvious modifications if $p$ or $q= \infty$. 
Here we denote $\varphi(D-k)f(x)= (\varphi(\cdot -k) \widehat{f}(\cdot))^\vee(x)$.

We simply write $M^{p,q}({\mathbf R}^n)$ 
instead of $M^{p,q}_0({\mathbf R}^n)$ when $s=0$.
The space $M^{p,q}_s({\mathbf R}^n)$ is a Banach space which
 is independent of the choice of $\varphi \in {\mathcal S}({\mathbf R}^n)$
satisfying \eqref{window function}
(\cite[Theorem 6.1]{Feichtinger}).
If $1 \leq p,q < \infty$, 
then
${\mathcal S}({\mathbf R}^n)$
is dense in $M^{p,q}_s({\mathbf R}^n)$
(\cite[Theorem 6.1]{Feichtinger}).
If $1 \leq p_1 \leq p_2 \leq \infty$, $1 \leq q_1 \leq q_2 \leq \infty$
and $s_1 \geq s_2$
then $M^{p_1,q_1}_{s_1} ({\mathbf R}^n) \hookrightarrow
M^{p_2,q_2}_{s_2}({\mathbf R}^n)$(\cite[Proposition 6.5]{Feichtinger}).
Let us define by ${\mathcal M}^{p,q}_s({\mathbf R}^n)$
the completion of ${\mathcal S}({\mathbf R}^n)$
under the norm $\| \cdot \|_{M^{p,q}_s}$.
The dual of ${\mathcal M}^{p,q}_s({\mathbf R}^n)$
can be identified with 
${\mathcal M}^{p^\prime,q^\prime}_{-s}({\mathbf R}^n)$,
where
$1/p+1/p^\prime =1 = 1/q + 1/q^\prime$.
If $1 \leq p,q < \infty$, then ${\mathcal M}^{p,q}_s({\mathbf R}^n) = M^{p,q}_s({\mathbf R}^n)$
 (\cite[Lemma 2.2]{Ben-Gro-Hei-Oko}).
Moreover, the complex interpolation theory for these spaces reads as follows:
Let $0< \theta <1$ and $1 \leq  p_1,p_2,q_1, q_2  \leq \infty, s_1,s_2 \in {\mathbf R}$.
Set
$1/p=(1- \theta)/p_1+ \theta/p_2$, $
1/q = (1- \theta)/q_1 + \theta /q_2$ and
$s=(1- \theta)s_1 + \theta s_2$,
then $({\mathcal M}^{p_1,q_1}_{s_1}, {\mathcal M}^{p_2,q_2}_{s_2})_{[\theta]} ={\mathcal M}^{p,q}_s$
(\cite[Theorem 6.1]{Feichtinger},  \cite[Theorem 2.3]{Wang Huang}).

We recall the following lemmas.

\begin{lemma}[{\cite[Proposition 6.7]{Feichtinger}}]\label{inclusion 1}
Let $1 \leq  p \leq \infty$,
$1/p +1/p^\prime =1$ and 
$s \in {\mathbf R}$.
Then 
$$M^{p, \min ( p,p^\prime )}_s ({\mathbf R}^n)
\hookrightarrow
L^p_s ({\mathbf R}^n)
\hookrightarrow 
M^{p, \max ( p,p^\prime )}_s
({\mathbf R}^n).$$  

\end{lemma}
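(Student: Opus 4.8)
The plan is to reduce everything to the unweighted case $s=0$, then to a single family of inclusions, which I attack by interpolating three elementary endpoint estimates and invoking duality.

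First I would dispose of the weight. Writing $\Lambda^s f = (\langle\cdot\rangle^s\widehat f)^\vee$, one has $\|f\|_{L^p_s}=\|\Lambda^s f\|_{L^p}$ by definition, while the lifting property of modulation spaces gives $\|f\|_{M^{p,q}_s}\approx\|\Lambda^s f\|_{M^{p,q}}$; the latter is routine because $\langle\xi\rangle^s\approx\langle k\rangle^s$ on $\mathrm{supp}\,\varphi(\cdot-k)$ and $\langle k\rangle^{-s}\langle\cdot\rangle^s$ is a uniformly (in $k$) bounded Fourier multiplier on the relevant frequency cube. Since $\Lambda^s$ is an isometry $L^p_s\to L^p$ and $M^{p,q}_s\to M^{p,q}$, it suffices to treat $s=0$. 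Now, since $\min(p,p')$ and $\max(p,p')$ are conjugate exponents of each other, i.e. $(\max(p,p'))'=\min(p,p')$, and since the excerpt records $(\mathcal M^{p,q})^*=\mathcal M^{p',q'}$ together with $(L^p)^*=L^{p'}$, transposing $L^p\hookrightarrow M^{p,\max(p,p')}$ yields $\mathcal M^{p',\min(p,p')}\hookrightarrow L^{p'}$, which after relabelling $p'\mapsto p$ is exactly the second inclusion. Hence it is enough to prove the single family $L^p\hookrightarrow M^{p,\max(p,p')}$ for all $1\le p\le\infty$ and read off $M^{p,\min(p,p')}\hookrightarrow L^p$ by transposition.

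The inclusion $L^p\hookrightarrow M^{p,\max(p,p')}$ I would obtain by complex interpolation from three endpoints. Write $f_k=\varphi(D-k)f=f*\psi_k$ with $\psi_k(x)=e^{ik\cdot x}\varphi^\vee(x)$, so that $\|\psi_k\|_{L^1}=\|\varphi^\vee\|_{L^1}$ is independent of $k$. For $p=1$ and $p=\infty$, Young's inequality gives $\|f_k\|_{L^p}\le\|\varphi^\vee\|_{L^1}\|f\|_{L^p}$ uniformly in $k$, that is $L^1\hookrightarrow M^{1,\infty}$ and $L^\infty\hookrightarrow M^{\infty,\infty}$. For $p=2$, the bounded overlap of the supports of $\varphi(\cdot-k)$ together with $\sum_k\varphi(\cdot-k)=1$ forces $\sum_k|\varphi(\xi-k)|^2\approx 1$, so Plancherel gives $\sum_k\|f_k\|_{L^2}^2=\int(\sum_k|\varphi(\xi-k)|^2)|\widehat f(\xi)|^2\,d\xi\approx\|f\|_{L^2}^2$, i.e. $L^2=M^{2,2}$. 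Interpolating the couple $(L^1,L^2)$ against $(\mathcal M^{1,\infty},\mathcal M^{2,2})$ covers $1\le p\le2$, where $\theta=2/p'$ makes the second index equal to $p'$; interpolating $(L^2,L^\infty)$ against $(\mathcal M^{2,2},\mathcal M^{\infty,\infty})$ covers $2\le p\le\infty$, where the second index becomes $p$. In both ranges the resulting second index is precisely $\max(p,p')$, by the interpolation theorem quoted in the preliminaries.

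The step I expect to be most delicate is the endpoint bookkeeping at $q=\infty$: the interpolation identity in the excerpt is stated for the Schwartz-closure spaces $\mathcal M^{p,q}_s$, and $\mathcal M\ne M$ precisely when an index is $\infty$, so $L^1$ need not embed into $\mathcal M^{1,\infty}$ as a whole. I would circumvent this by viewing the inclusion as the assertion that the identity map satisfies the norm bound $\|f\|_{M^{p,\max(p,p')}}\lesssim\|f\|_{L^p}$ for $f\in\mathcal S(\mathbf R^n)$ only, which is all that the excerpt's definition of a bounded inclusion requires. On the dense common subspace $\mathcal S$ the endpoint bounds above hold verbatim, so Calderón's complex interpolation applied to the identity operator between the couples $(L^{p_0},L^{p_1})$ and $(\mathcal M^{p_0,q_0},\mathcal M^{p_1,q_1})$ transfers them to the intermediate norm inequality on $\mathcal S$. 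The transposition in the second paragraph must likewise be phrased through the pairings of $\mathcal M^{p,q}$ with $\mathcal M^{p',q'}$ and of $L^p$ with $L^{p'}$, again only as a norm inequality on $\mathcal S$, so as to avoid any reflexivity or density gap at the $\infty$-endpoints.
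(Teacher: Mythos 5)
The paper itself offers no proof of this lemma: it is imported wholesale as Feichtinger's Proposition 6.7, so your argument is necessarily a different route, and it is in fact the classical one. Your overall scheme is sound: the lifting reduction to $s=0$ via $\Lambda^s$ is standard and correctly justified; the reduction of both inclusions to the single family $L^p\hookrightarrow M^{p,\max(p,p')}$ by transposition through $({\mathcal M}^{p,q})^*={\mathcal M}^{p',q'}$ is legitimate (using $(\max(p,p'))'=\min(p,p')$ and reading everything as norm inequalities on $\mathcal S$); and the three endpoints are right, including the bookkeeping $\theta=2/p'$ for the couple $(L^1,L^2)$ against $({\mathcal M}^{1,\infty},{\mathcal M}^{2,2})$ and the lower bound $\sum_k|\varphi(\xi-k)|^2\gtrsim 1$ by Cauchy--Schwarz and bounded overlap.

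The only soft spot is the one you flag yourself: interpolating $(L^2,L^\infty)$ against $({\mathcal M}^{2,2},{\mathcal M}^{\infty,\infty})$, where the identity genuinely fails to map $L^\infty$ into the Schwartz closure ${\mathcal M}^{\infty,\infty}$. Your proposed repair --- treat the identity as a densely defined operator and run Calder\'on's theorem purely as a norm inequality on $\mathcal S$ --- is correct in substance (the three-lines argument only ever uses endpoint bounds on analytic families with values in $X_0\cap X_1$, and $(L^2,L^\infty)_{[\theta]}=L^p$ isometrically), but as written it is asserted rather than carried out, and it is the most fragile joint of the proof. It is worth knowing that the issue can be avoided entirely: on $1\le p\le 2$ interpolate $L^1\hookrightarrow{\mathcal M}^{1,\infty}$ (legitimate as stated, since boundedness of the inclusion plus density of $\mathcal S$ in $L^1$ already places all of $L^1$ inside the closure space ${\mathcal M}^{1,\infty}$ --- so your caution at the $p=1$ endpoint is actually unnecessary) with $L^2={\mathcal M}^{2,2}$ to get $L^p\hookrightarrow M^{p,p'}$, and interpolate $M^{1,1}\hookrightarrow L^1$ (immediate from $f=\sum_k\varphi(D-k)f$ and the triangle inequality; here ${\mathcal M}=M$ since both indices are finite) with $M^{2,2}=L^2$ to get $M^{p,p}\hookrightarrow L^p$; your transposition step then delivers both inclusions on $2\le p\le\infty$, and no endpoint couple involves $L^\infty$ or any closure/density subtlety.
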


%
%

Let $U_\lambda :f(x) \mapsto f(\lambda x)$
be the dilation operator.
Then the following dilation property of
$M^{p,q}$ is known.

\begin{lemma}[{\cite[Theorem 3.1]{Sugimoto-Tomita}}]\label{DILATION}
Let $1 \leq p,q \leq \infty$.
We have, for $C_1,C_2>0,$
\begin{align*}
\| U_\lambda f \|_{M^{p,q}}
\leq C_1 \lambda^{n \mu_1(p,q)}
\| f \|_{M^{p,q}}, \quad \forall \lambda \geq 1, \forall f \in M^{p,q}({\mathbf R}^n),\\
\| U_\lambda f \|_{M^{p,q}} \geq C_2 \lambda^{n \mu_2(p,q)} \| f \|_{M^{p,q}}, \quad \forall \lambda \geq 1, 
\forall f \in M^{p,q}({\mathbf R}^n),
\end{align*}
where 
\begin{align*}
\mu_1(p,q)=
\begin{cases}
-1/p & {\rm if~} (1/p,1/q) \in I^*_1 ~:~ \min(1/p,1/p^\prime) \geq 1/q, \\
1/q -1 & {\rm if~} (1/p,1/q) \in I^*_2 ~:~  \min(1/q,1/2) \geq 1/p^\prime, \\
-2/p +1/q  & {\rm if~} (1/p,1/q) \in I^*_3  ~:~\min (1/q,1/2) \geq 1/p,
\end{cases}\\
\mu_2(p,q)=
\begin{cases}
-1/p & {\rm if~} (1/p,1/q) \in I_1 ~:~ \max(1/p,1/p^\prime) \leq 1/q, \\
1/q -1 & {\rm if~} (1/p,1/q) \in I_2 ~:~  \max(1/q,1/2) \leq 1/p^\prime, \\
-2/p +1/q  & {\rm if~} (1/p,1/q) \in I_3  ~:~\max (1/q,1/2) \leq 1/p.
\end{cases}
\end{align*}

\end{lemma}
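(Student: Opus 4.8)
The plan is to prove the two estimates simultaneously by reducing everything to a single operator‑norm bound and then invoking complex interpolation. Write $\square_k=\varphi(D-k)$, so that $\|f\|_{M^{p,q}}=\big\|(\|\square_k f\|_{L^p})_k\big\|_{\ell^q}$. First I would observe that the two inequalities are not independent: since $U_\lambda$ is invertible with $U_\lambda^{-1}=U_{1/\lambda}$, the lower bound for $\lambda\ge1$ is equivalent, through $f=U_{1/\lambda}(U_\lambda f)$, to the upper operator‑norm bound $\|U_\sigma\|_{M^{p,q}\to M^{p,q}}\lesssim\sigma^{n\mu_2(p,q)}$ for the contraction $\sigma=1/\lambda\le1$. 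Moreover, a direct check using $\mu_1=\max(-1/p,\,1/q-1,\,-2/p+1/q)$ and $\mu_2=\min(-1/p,\,1/q-1,\,-2/p+1/q)$ gives the identity $\mu_2(p,q)=-1-\mu_1(p',q')$, so by the duality $(M^{p,q})^*=M^{p',q'}$ together with $U_\lambda^*=\lambda^{-n}U_{1/\lambda}$ this contraction estimate follows from the expansion estimate for the conjugate exponents. Thus it suffices to prove
\[
\|U_\lambda\|_{M^{p,q}\to M^{p,q}}\lesssim\lambda^{n\mu_1(p,q)}\qquad(\lambda\ge1).
\]
Since $\mu_1$ is the maximum of three affine functions of $(1/p,1/q)$, it is piecewise affine on the triangulation of the square whose vertices are the four corners together with $(1/p,1/q)=(1/2,1/2)$ and $(1/2,1)$; by the complex interpolation $(\mathcal M^{p_0,q_0},\mathcal M^{p_1,q_1})_{[\theta]}=\mathcal M^{p,q}$ quoted above and the multiplicativity of operator norms under interpolation, it is enough to establish the displayed bound at these vertices, i.e. at $(p,q)\in\{(\infty,\infty),(1,\infty),(\infty,1),(1,1),(2,2),(2,1)\}$.

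The computational engine is the operator identity
\[
\square_k U_\lambda f=U_\lambda\big[\varphi(\lambda D-k)f\big],
\]
obtained at once from $\widehat{U_\lambda f}(\xi)=\lambda^{-n}\widehat f(\xi/\lambda)$. Here $\varphi(\lambda D-k)$ is a Fourier multiplier supported in the cube of side $2/\lambda$ centred at $k/\lambda$, and a change of variables shows its convolution kernel has $L^1$ norm $\|\check\varphi\|_{L^1}$, independent of $k$ and $\lambda$; hence $\|\varphi(\lambda D-k)\|_{L^p\to L^p}\lesssim1$ uniformly. Combining this with $\|U_\lambda h\|_{L^p}=\lambda^{-n/p}\|h\|_{L^p}$ yields $\|\square_k U_\lambda f\|_{L^p}\lesssim\lambda^{-n/p}\|\widetilde\square_{\ell(k)}f\|_{L^p}$, where $\ell(k)$ is the integer nearest $k/\lambda$ and $\widetilde\square_\ell$ is a fixed fattening of $\square_\ell$. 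The essential bookkeeping fact is that for each $\ell$ there are $\sim\lambda^n$ indices $k$ with $\ell(k)=\ell$: a single frequency block of $f$ is spread over $\sim\lambda^n$ blocks of $U_\lambda f$.

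With this, the four corners are handled by the crude estimate: taking $\ell^q$ in $k$ and summing the multiplicities $\sim\lambda^n$ gives $\|U_\lambda f\|_{M^{p,q}}\lesssim\lambda^{n(1/q-1/p)}\|f\|_{M^{p,q}}$, and one checks that $1/q-1/p$ equals $\mu_1(p,q)$ exactly at each of the four corners, so the bound is sharp there. The genuinely delicate vertices lie on the line $p=2$, where the crude triangle inequality overcounts and must be replaced by $L^2$‑orthogonality: the pieces $\varphi(\lambda D-k)f$ with $\ell(k)=\ell$ have essentially disjoint frequency supports, so Plancherel gives $\sum_{\ell(k)=\ell}\|\varphi(\lambda D-k)f\|_{L^2}^2\approx\|\widetilde\square_\ell f\|_{L^2}^2$ rather than a factor $\lambda^n$ larger. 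Comparing $\ell^q$ with $\ell^2$ over the $\sim\lambda^n$ terms in each group — trivially when $q\ge2$, and by Hölder with the gain $\lambda^{n(1-q/2)}$ when $q\le2$ — then produces $\|U_\lambda\|_{M^{2,q}\to M^{2,q}}\lesssim\lambda^{-n/2}$ for $q\ge2$ and $\lesssim\lambda^{n(1/q-1)}$ for $q\le2$, which is exactly $\lambda^{n\mu_1(2,q)}$ and covers $(2,2)$ and $(2,1)$.

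The main obstacle is precisely this $p=2$ orthogonality step: it is the only place where one must beat the trivial summation, and the appearance of the three distinct exponents $-1/p$, $1/q-1$, $-2/p+1/q$ is forced by how the $\ell^q$‑versus‑$\ell^2$ comparison of the $\lambda^n$ sub‑blocks interacts with the $L^2$‑scaling $\lambda^{-n/2}$. Once the vertex bounds are in hand, interpolation over the triangulation delivers $\|U_\lambda\|_{M^{p,q}\to M^{p,q}}\lesssim\lambda^{n\mu_1}$ for all $(p,q)$ and $\lambda\ge1$, and the reduction of the first paragraph converts this into both inequalities of the lemma. The remaining points are routine: the uniformity of all implied constants in $k$ and $\lambda$, the extension from $\mathcal S$ to the boundary spaces with $p$ or $q=\infty$ (where the crude bound already holds for every $f\in M^{p,q}$ and interpolation is applied only at interior indices, for which $\mathcal M^{p,q}=M^{p,q}$), and the verification that the triangulation vertices carry the affine pieces of $\mu_1$.
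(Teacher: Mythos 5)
Note first that the paper you are checking against contains no internal proof of this lemma: it is imported verbatim from Sugimoto--Tomita \cite{Sugimoto-Tomita}, Theorem 3.1, so the only meaningful comparison is with that cited source. Your proposal is correct in strategy and essentially complete, and its skeleton matches the cited proof: sharp bounds at distinguished exponents, complex interpolation of the ${\mathcal M}^{p,q}$ spaces over a partition on which $\mu_1$ is affine (your identification $\mu_1=\max(-1/p,\,1/q-1,\,-2/p+1/q)$, $\mu_2=\min(\cdots)$ checks out, as does $\mu_2(p,q)=-1-\mu_1(p',q')$), and a duality--inversion step via $U_\lambda^*=\lambda^{-n}U_{1/\lambda}$ reducing the contraction (hence lower) bound to the expansion bound at conjugate exponents. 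Where you genuinely differ is in the computational engine: you run everything through the discrete frequency-uniform decomposition, using the commutation $\varphi(D-k)U_\lambda=U_\lambda\varphi(\lambda D-k)$, the uniform $L^1$-kernel bound $\|\varphi(\lambda D-k)\|_{L^p\to L^p}\lesssim 1$, the multiplicity count $\#\{k:\ell(k)=\ell\}\sim\lambda^n$, and Plancherel on the line $p=2$; Sugimoto--Tomita instead carry out the analogous bookkeeping on the continuous short-time Fourier transform with dilated windows. Your discrete version is more elementary and makes the source of the three exponents transparent (crude counting sharp on the edges $p=\infty$ and $q=\infty$; $\ell^q$-versus-$\ell^2$ comparison of $\sim\lambda^n$ sub-blocks on $p=2$), at the cost of the block-overlap bookkeeping; the STFT route avoids the counting but hides the combinatorics in window-change constants.

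Two loose ends that you dismiss as routine do need explicit repair, though both are repairable in a standard way and do not invalidate the argument. First, the duality $(M^{p,q})^*=M^{p',q'}$ you invoke is literally false when an index is infinite; the contraction bound should instead be obtained by pairing $U_\sigma f$ against Schwartz $g$ normalized in ${\mathcal M}^{p',q'}$, which suffices because your expansion bound is applied only to the Schwartz function $U_{1/\sigma}g$ and because $\|h\|_{M^{p,q}}$ is recovered by such pairings for all $1\le p,q\le\infty$. Second, the quoted interpolation identity concerns ${\mathcal M}^{p,q}$, not $M^{p,q}$, so the interpolated conclusion is only automatic at points with $p,q<\infty$; this is harmless precisely because every $(p,q)$ with an infinite index lies on one of the two edges where your crude bound $\lambda^{n(1/q-1/p)}$ already coincides with $\lambda^{n\mu_1(p,q)}$ (namely $\mu_1(p,\infty)=-1/p$ and $\mu_1(\infty,q)=1/q$), a verification you gesture at and which indeed holds. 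With these two points written out, your proof is a sound, self-contained alternative to the citation.
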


\subsection{Besov spaces}

We recall the Besov spaces.
Let $1 \leq p,q \leq \infty$ and $s \in {\mathbf R}$.
Suppose that $\psi_0,\psi \in {\mathcal S}({\mathbf R}^n)$
satisfy ${\rm supp~}\psi_0 \subset \{ \xi~|~|\xi| \leq 2 \}$,
${\rm supp~} \psi \subset \{ \xi~|~1/2 \leq |\xi| \leq 2 \}$
and $\psi_0(\xi) + \sum^\infty_{j=1} \psi (\xi/2^j)=1$
for all $\xi \in {\mathbf R}^n$.
Set $\psi_j(\cdot) = \psi(\cdot /2^j)$ if $j \geq 1$.
Then the Besov space $B^{p,q}_s({\mathbf R}^n)$
consists of all $f \in {\mathcal S}^\prime({\mathbf R}^n)$
such that
$$
\| f \|_{B^{p,q}_s}
= \left( \sum^\infty_{j=0} 2^{jsq} \| (\widehat{f} (\cdot) \psi_j(\cdot))^\vee \|_{L^p}^q
\right)^{1/q}< \infty,
$$
with usual modification again if $q=\infty$.

The dual of $B^{p,q}_s({\mathbf R}^n)$
can be identified with 
$B^{p^\prime,q^\prime}_{-s}({\mathbf R}^n)$,
where
$1/p+1/p^\prime =1 = 1/q + 1/q^\prime$.

%
%
%
%
%
%

\subsection{Local Hardy spaces}

We  recall the local Hardy spaces.
Let $0<p<\infty$, and let $\Psi \in {\mathcal S}({\mathbf R}^n)$
be such that $\int_{{\mathbf R}^n}\Psi(x)\, dx \neq 0$.
Then the local Hardy space $h^p({\mathbf R}^n)$
consists of all $f \in {\mathcal S}^\prime({\mathbf R}^n)$
such that
$$
\| f  \|_{h^p}= \Big\| \sup_{0<t<1}|\Psi_{t}*f| \Big\|_{L^p}
<\infty,
$$
where $\Psi_t(x)=t^{-n}\Psi(x/t)$.
We remark that
$h^1({\mathbf R}^n) \hookrightarrow L^1({\mathbf R}^n)$
(\cite[Theorem 2]{Goldberg}),
$h^p({\mathbf R}^n)=L^p({\mathbf R}^n)$ if $1<p<\infty$
(\cite[p.30]{Goldberg}),
and the definition of $h^p({\mathbf R}^n)$ is independent
of the choice of $\Psi \in {\mathcal S}({\mathbf R}^n)$
with $\int_{{\mathbf R}^n}\Psi(x)\, dx \neq 0$
(\cite[Theorem 1]{Goldberg}).
The complex interpolation theory
for these spaces reads as follows:
Let $1 \leq p_1,p_2 < \infty$ and
$0< \theta <1$.
Set $1/p=(1- \theta)/p_1 + \theta/p_2$,
then $(h^{p_1},h^{p_2})_{[\theta]} =h^p$ (\cite[p.45]{Triebel II}).

\begin{lemma}[{\cite{Kobayashi Miyachi Tomita}}] \label{LOCAY HARDY h1}
Let $1 \leq q \leq \infty$ and $s \in {\mathbf R}$.
Then $h^1({\mathbf R}^n) \hookrightarrow M^{1,q}_s({\mathbf R}^n)$
if and only if $s \leq -n/q.$
However, in the case $q \not=\infty$,
$L^1({\mathbf R}^n) \hookrightarrow M^{1,q}_s({\mathbf R}^n)$
only if $s< -n/q$.
\end{lemma}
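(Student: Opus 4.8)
The plan is to prove both directions by first reducing to the critical exponent and then treating the endpoints $q=1,\infty$ separately before interpolating. Since $M^{1,q}_{s_1}\hookrightarrow M^{1,q}_{s_2}$ whenever $s_1\ge s_2$, the sufficiency of $s\le-n/q$ reduces to the single critical inclusion $h^1(\mathbf{R}^n)\hookrightarrow M^{1,q}_{-n/q}(\mathbf{R}^n)$. For $q=\infty$ this is immediate: from $\varphi(D-k)f=(\varphi(\cdot-k))^\vee*f=e^{ik\cdot}\,\varphi^\vee*f$ we get $\|\varphi(D-k)f\|_{L^1}\le\|\varphi^\vee\|_{L^1}\|f\|_{L^1}$ uniformly in $k$, so $\|f\|_{M^{1,\infty}}\lesssim\|f\|_{L^1}\lesssim\|f\|_{h^1}$ by Goldberg's embedding $h^1\hookrightarrow L^1$. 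The remaining value $q=1$, i.e. $h^1\hookrightarrow M^{1,1}_{-n}$, is the technical core; granting it, the complex interpolation theory for modulation spaces recalled above, applied to the couple $({\mathcal M}^{1,\infty}_0,{\mathcal M}^{1,1}_{-n})$ (into the first of which $h^1$ embeds by density of ${\mathcal S}$), produces $h^1\hookrightarrow M^{1,q}_{-n/q}$ for every $1<q<\infty$, since these spaces lie exactly on the line $s=-n/q$.

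For the endpoint $h^1\hookrightarrow M^{1,1}_{-n}$ I would use the atomic decomposition of $h^1$ to reduce to a uniform bound $\sum_k\langle k\rangle^{-n}\|\varphi(D-k)a\|_{L^1}\lesssim1$ over local $h^1$-atoms $a$. Since $\|\varphi(D-k)a\|_{L^1}$ is invariant under translating $a$, one may assume $a$ is supported in a cube $Q$ of side $\ell$ centered at the origin with $\|a\|_\infty\le|Q|^{-1}$. For a \emph{small} atom ($\ell<1$) the vanishing moment $\int a=0$ lets us subtract the value of the kernel $e^{ik\cdot}\varphi^\vee$ at the center and estimate by the mean value theorem, gaining a factor $\ell(1+|k|)$ together with the rapid decay of $\varphi^\vee$; this renders the weighted sum convergent and bounded uniformly in $\ell$ and $a$. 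The genuinely delicate case is the \emph{large} atom ($\ell\ge1$), where there is no cancellation: the crude bound $\|\varphi(D-k)a\|_{L^1}\lesssim\|a\|_{L^1}\le1$ is not summable against $\langle k\rangle^{-n}$, so one must extract decay in $k$ from the frequency concentration of $\hat a$, for instance through Plancherel on each unit frequency cube combined with a Bernstein/Nikol'skii estimate converting band-limited $L^2$ mass into $L^1$. \textbf{Making this decay quantitatively match the weight $\langle k\rangle^{-n}$ at the endpoint is the main obstacle of the whole argument}, and it is precisely here that the methods of Kobayashi--Miyachi--Tomita are invoked.

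For the necessity of $s\le-n/q$, suppose $h^1\hookrightarrow M^{1,q}_s$ and test against the family $f_N=\chi\cdot\prod_{j=1}^n D_N(x_j)$, where $\chi\in{\mathcal S}$ has $\widehat{\chi}$ a fixed bump and $D_N$ is the one-dimensional Dirichlet kernel, so that $\widehat{f_N}$ essentially equals $1$ on the cube $[-N,N]^n$. On the one hand $\varphi(D-k)f_N$ has $L^1$-norm $\gtrsim1$ for the $\approx N^n$ indices $k$ with $|k|\lesssim N$, whence $\|f_N\|_{M^{1,q}_s}\gtrsim N^{s+n/q}$ whenever $sq+n>0$; on the other hand $\|f_N\|_{h^1}\lesssim(\log N)^n$, coming from the $L^1$-norm of the Dirichlet kernel over one period. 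Letting $N\to\infty$ forces $s+n/q\le0$, and the same test with $q=\infty$ gives $s\le0$.

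Finally, to see that $L^1$ genuinely requires the strict inequality $s<-n/q$ when $q\ne\infty$, it suffices to exhibit one $f\in L^1$ with $\|f\|_{M^{1,q}_{-n/q}}=\infty$. I would take $f=\sum_{j\ge1}c_j\,\psi_j$ with $\psi_j(x)=2^{jn}\psi(2^jx)$ a non-negative bump of unit $L^1$-mass concentrated at scale $2^{-j}$ and \emph{without} cancellation. A direct computation gives $\|\varphi(D-k)f\|_{L^1}\approx\sum_{j\ge m}c_j=:S_m$ for $|k|\approx2^m$, so that $\|f\|_{M^{1,q}_{-n/q}}^q\approx\sum_m S_m^q$ while $\|f\|_{L^1}\le\sum_j c_j$; choosing tails $S_m=(\log(m+2))^{-1}$ (hence $c_j=S_j-S_{j+1}>0$ summable) makes $\sum_m S_m^q$ diverge for every $q<\infty$ yet keeps $f\in L^1$. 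This superposition of non-cancelling small bumps is exactly the configuration excluded by the atomic vanishing-moment condition that made the $h^1$ endpoint work, which explains the dichotomy between $h^1$ and $L^1$ at the critical index.
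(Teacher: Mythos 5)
You should first note that the paper does not prove this lemma at all: it is quoted verbatim from Kobayashi--Miyachi--Tomita \cite{Kobayashi Miyachi Tomita}, so your attempt can only be judged on its own merits and against that paper's methods. Your scaffolding is sound: the reduction to $s=-n/q$, the $q=\infty$ case via $\|\varphi(D-k)f\|_{L^1}\le\|\varphi^\vee\|_{L^1}\|f\|_{L^1}$ and Goldberg's $h^1\hookrightarrow L^1$, the complex interpolation of the couple $({\mathcal M}^{1,\infty},{\mathcal M}^{1,1}_{-n})$ (legitimate, since boundedness of the inclusion plus density of ${\mathcal S}$ in $h^1$ puts the image inside the Schwartz-closures), and the closing $L^1$ counterexample with tails $S_m=(\log(m+2))^{-1}$ — which is correct, and is the same log-tuning device the present paper uses in the proof of Lemma \ref{N-LEMMA 2}. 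But the technical core $h^1\hookrightarrow M^{1,1}_{-n}$ is not established, and your small-atom claim is actually false as stated: cancellation plus the mean value theorem gives $\|\varphi(D-k)a\|_{L^1}\lesssim\min\bigl(1,\ell\langle k\rangle\bigr)$ and nothing better pointwise in $k$ (a small atom can concentrate $\widehat a$ near any prescribed frequency), and $\sum_{k}\langle k\rangle^{-n}\min\bigl(1,\ell\langle k\rangle\bigr)$ \emph{diverges}, because $\sum_{k\in\mathbf{Z}^n}\langle k\rangle^{-n}=\infty$ and the tail $|k|>\ell^{-1}$ still diverges logarithmically. What rescues that range is precisely the Plancherel mechanism you reserve for large atoms: for $\ell\le1$ one has $\|\varphi(D-k)a\|_{L^1}\lesssim\|\varphi(D-k)a\|_{L^2}$ by unit-scale spatial concentration, and then Cauchy--Schwarz in $k$ with $\sum_k\|\varphi(\cdot-k)\widehat a\|_{L^2}^2\lesssim\|a\|_{L^2}^2\le\ell^{-n}$ and $\sum_{|k|>\ell^{-1}}\langle k\rangle^{-2n}\lesssim\ell^{n}$ gives an $O(1)$ contribution. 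Ironically, the large-atom case, which you call the main obstacle and explicitly defer to Kobayashi--Miyachi--Tomita, follows from the \emph{same} two lines: $\|\varphi(D-k)a\|_{L^1}\lesssim\ell^{n/2}\|\varphi(D-k)a\|_{L^2}$ (weighted Cauchy--Schwarz around $Q$, with routine tail control), then the $\ell^2_k$ summation against $\langle k\rangle^{-n}\in\ell^2$ and $\|a\|_{L^2}\le\ell^{-n/2}$. So as written the endpoint argument is both incorrect where you claim it closes and outsourced where it is easy; the fix is available with tools you already display, but you have not carried it out.

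The necessity argument also has an unproven crux: testing with $f_N=\chi\cdot\prod_j D_N(x_j)$ requires $\|f_N\|_{h^1}\lesssim(\log N)^n$, and you justify only the $L^1$ norm, whereas $\|\cdot\|_{h^1}$ is the \emph{larger} norm and demands a maximal-function estimate for the truncated Dirichlet product — plausible, but it is exactly the point of that test and cannot be asserted. A cheaper certified test is a single dilated bump, in the spirit of the dilation arguments the paper uses elsewhere (Lemma \ref{DILATION}): take $g$ with $\widehat g\gtrsim1$ near the origin and $f_\lambda=g(\lambda\cdot)$; then a direct computation gives $\|f_\lambda\|_{h^1}\approx\lambda^{-n}\log\lambda$, while $\|\varphi(D-k)f_\lambda\|_{L^1}\ge\|\varphi(\cdot-k)\widehat{f_\lambda}\|_{L^\infty}\gtrsim\lambda^{-n}$ for $|k|\lesssim\lambda$ yields $\|f_\lambda\|_{M^{1,q}_s}\gtrsim\lambda^{-n+s+n/q}$ when $s+n/q>0$; letting $\lambda\to\infty$ forces $s\le-n/q$ (and $s\le0$ for $q=\infty$). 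With that substitution and the repaired atomic estimate above, your outline becomes a complete proof; as submitted, it has two genuine gaps at precisely the two points where the lemma is nontrivial.
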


\section{Sufficient conditions}

We prove the {\it if} part of Theorem \ref{MAIN THEOREM}.
First we remark the following fact:
\begin{lemma}\label{S-LEMMA 1}
Let $1<p \leq 2$,
$p \leq q \leq p^\prime$
and $s \leq -n(1/p+1/q-1)$.
Then $L^p({\mathbf R}^n) \hookrightarrow M^{p,q}_s({\mathbf R}^n)$.
\end{lemma}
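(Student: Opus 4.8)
The plan is to realize the target $M^{p,q}_s$ as a complex interpolation space whose two endpoints I can reach separately, and then to use monotonicity in $s$ to pass from the critical exponent to all smaller $s$. Since $M^{p,q}_{s_0}\hookrightarrow M^{p,q}_s$ whenever $s\le s_0$ (Section~\ref{Preliminaries}), it suffices to treat the critical value $s=-n(1/p+1/q-1)$; the general case follows at once. The hypotheses $1<p\le 2$, $p\le q\le p'$ describe, in the $(1/p,1/q)$-plane, a triangle bounded by the edges $q=p'$ and $q=p$ (together with the excluded edge $p=1$), with apex at $(1/2,1/2)$. So I will first establish the two edges and then fill the triangle by interpolating in $q$ with $p$ fixed.

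On the edge $q=p'$ the critical exponent is $s=-n(1/p+1/p'-1)=0$, and the required inclusion $L^p\hookrightarrow M^{p,p'}$ is exactly the right-hand embedding of Lemma~\ref{inclusion 1} (since $\max(p,p')=p'$ for $p\le2$). The edge $q=p$ is the genuinely delicate one: I must show $L^p\hookrightarrow M^{p,p}_{-n(2/p-1)}$ for $1<p\le2$. I expect this to be the main obstacle, because the naive uniform-in-$k$ bound $\|\varphi(D-k)f\|_{L^p}\lesssim\|f\|_{L^p}$ is far too lossy to make the weighted $\ell^p$-sum $\sum_k\langle k\rangle^{-n(2-p)}\|\varphi(D-k)f\|_{L^p}^p$ converge (it would need $p<1$), and because the corresponding endpoint at $p=1$, namely $L^1\hookrightarrow M^{1,1}_{-n}$, is \emph{false} by Lemma~\ref{LOCAY HARDY h1}. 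Some genuine cancellation/orthogonality is therefore needed.

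To circumvent this I would argue on the local Hardy scale rather than the $L^p$ scale. At $p=2$ the embedding $L^2=M^{2,2}$ is just Plancherel, while at $p=1$ Lemma~\ref{LOCAY HARDY h1} with $q=1$ supplies the \emph{true} endpoint $h^1\hookrightarrow M^{1,1}_{-n}$. Interpolating the inclusion map between these endpoints, using $(h^1,h^2)_{[\theta]}=h^p$ and $(\mathcal M^{1,1}_{-n},\mathcal M^{2,2}_0)_{[\theta]}=\mathcal M^{p,p}_{-n(2/p-1)}$ with $1/p=1-\theta/2$, yields $h^p\hookrightarrow M^{p,p}_{-n(2/p-1)}$ for $1<p<2$; since $h^p=L^p$ for $1<p<\infty$, this is the desired diagonal edge ($p=2$ having been handled directly). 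The arithmetic $2/p-1=1-\theta$ confirms that the interpolated weight is exactly the critical one, and all modulation indices in play are finite, so $\mathcal M=M$ throughout.

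Finally, with both edges in hand I fix $p$ and interpolate in $q$: complex interpolation of the inclusions $L^p\hookrightarrow M^{p,p}_{-n(2/p-1)}$ and $L^p\hookrightarrow M^{p,p'}_0$, together with $(L^p,L^p)_{[\theta]}=L^p$ and $(\mathcal M^{p,p}_{-n(2/p-1)},\mathcal M^{p,p'}_0)_{[\theta]}=\mathcal M^{p,q}_s$ where $1/q=(1-\theta)/p+\theta/p'$ and $s=-(1-\theta)n(2/p-1)$, gives $L^p\hookrightarrow M^{p,q}_s$ with $s=-n(1/p+1/q-1)$ for every $p\le q\le p'$ (one checks $1/p+1/q-1=(1-\theta)(2/p-1)$). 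Monotonicity in $s$ then completes the proof. The only nonroutine ingredient is the diagonal endpoint, which is precisely where the results of Kobayashi--Miyachi--Tomita enter.
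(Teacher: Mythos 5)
Your proof is correct and follows essentially the same route as the paper: both arguments rest on the endpoints $h^1(\mathbf{R}^n)\hookrightarrow M^{1,q}_{-n/q}(\mathbf{R}^n)$ from Lemma \ref{LOCAY HARDY h1} and $L^2(\mathbf{R}^n)=M^{2,2}(\mathbf{R}^n)$, combined via complex interpolation on the $h^p$- and $\mathcal{M}^{p,q}_s$-scales (legitimate here since all indices involved are finite, so $\mathcal{M}^{p,q}_s=M^{p,q}_s$ throughout). The only difference is organizational: the paper interpolates in a single step against the whole family $h^1\hookrightarrow M^{1,q_1}_{-n/q_1}$, $1\le q_1\le\infty$, whereas you use only the $q_1=1$ instance to get the diagonal edge, take the edge $q=p'$ from Lemma \ref{inclusion 1}, and then interpolate a second time in $q$ with $p$ fixed --- the arithmetic in both versions lands on the same critical exponent $s=-n(1/p+1/q-1)$.
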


\begin{proof}[\sc Proof of Lemma \ref{S-LEMMA 1}]
We note that $L^2({\mathbf R}^n)=M^{2,2}({\mathbf R}^n)$
and, by Lemma \ref{LOCAY HARDY h1}, 
$$
h^1({\mathbf R}^n) \hookrightarrow M^{1,q}_{-n/q}({\mathbf R}^n)
$$
for $1 \leq q \leq \infty$.
The complex interpolation method yields
$$
L^p({\mathbf R}^n) \hookrightarrow M^{p,q}_{-n(1/p+1/q-1)}({\mathbf R}^n),
$$
which gives the desired result.
\end{proof}

\begin{proof}[\sc Proof of Theorem \ref{MAIN THEOREM} (``if'' part)]

Suppose $q \leq p$ and $s \leq n \nu_2(p,q)$.
If $q \leq \min(p,p^\prime)$,
then $s \leq n \nu_2(p,q)=0$ and we have
$$
M^{p,q}({\mathbf R}^n) \hookrightarrow 
M^{p,\min(p,p^\prime)}_s ({\mathbf R}^n) \hookrightarrow L^p_s({\mathbf R}^n)
$$
by Lemma \ref{inclusion 1}.
If $2 <p< \infty$ and $p^\prime \leq q \leq p$,
then $s \leq n \nu_2(p,q)=n(1/p+1/q-1)$ and
we have $M^{p,q}({\mathbf R}^n) \hookrightarrow L^p_s({\mathbf R}^n)$ again
by the dual statement of Lemma \ref{S-LEMMA 1}.
Thus we have the sufficiency of conditions (1) and (3).
Conditions (2) and (4) are sufficient by Corollary \ref{LpMpq}.
\end{proof}

\section{Necessary conditions}

We prove the {\it only if} part of Theorem \ref{MAIN THEOREM}.
For the purpose, we prepare lemmas \ref{N-LEMMA 1}--\ref{Pre N-LEMMA 2}
whose proofs are repetitions of arguments
in \cite{Kobayashi Miyachi Tomita}:

\begin{lemma}\label{N-LEMMA 1}
Let $1 \leq p,q \leq \infty$, $p<q$ and $s \in {\mathbf R}$.
If $M^{p,q}_s({\mathbf R}^n) \hookrightarrow L^p({\mathbf R}^n)$,
then
$
s> n(1/p -1/q).
$

\end{lemma}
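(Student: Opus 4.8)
The plan is to argue by contradiction through an explicit family of test functions, following the template of \cite{Kobayashi Miyachi Tomita}. Suppose $M^{p,q}_s({\mathbf R}^n)\hookrightarrow L^p({\mathbf R}^n)$, so that $\|f\|_{L^p}\lesssim\|f\|_{M^{p,q}_s}$ for all $f\in{\mathcal S}({\mathbf R}^n)$, and fix a Schwartz function $\phi$ whose Fourier transform is supported in a small cube, so that each frequency block is essentially isolated. For a large radius $R$ and coefficients $(c_k)_{|k|\le R}$, I would test this inequality on
\[
f_R(x)=\sum_{|k|\le R}c_k\,\phi(x-a_k)\,e^{ik\cdot x},
\]
where the centers $a_k$ (for instance $a_k=L_0k$ with $L_0$ a large fixed constant) are chosen well separated so that the spatial bumps are effectively disjoint.

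The two norms should then decouple into weighted sequence norms. Since translation and modulation are $L^p$-isometries and the bumps $\phi(\cdot-a_k)$ barely overlap, I expect $\|f_R\|_{L^p}\approx\|(c_k)\|_{\ell^p}$; since $\widehat\phi$ is frequency-localized, the operator $\varphi(D-k)$ isolates the $k$-th term up to finitely many harmless neighbours, giving $\|f_R\|_{M^{p,q}_s}\approx\|(\langle k\rangle^s c_k)\|_{\ell^q}$. Granting these, the assumed embedding forces the weighted sequence inclusion $\|(c_k)\|_{\ell^p}\lesssim\|(\langle k\rangle^s c_k)\|_{\ell^q}$ with a constant uniform in $R$ and in $(c_k)$.

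It then remains to defeat this sequence inequality when $s\le n(1/p-1/q)$. Here I would take $c_k=\langle k\rangle^{-s-n/q}$ for $|k|\le R$. The right-hand side becomes $\big(\sum_{|k|\le R}\langle k\rangle^{-n}\big)^{1/q}\approx(\log R)^{1/q}$, while the left-hand side equals $\big(\sum_{|k|\le R}\langle k\rangle^{-(s+n/q)p}\big)^{1/p}$, which is $\approx R^{n(1/p-1/q)-s}$ when $s<n(1/p-1/q)$ and $\approx(\log R)^{1/p}$ at the endpoint $s=n(1/p-1/q)$. In either case the ratio of the two sides tends to $\infty$ as $R\to\infty$ (the endpoint uses $p<q$, the growth being like $(\log R)^{1/p-1/q}$), contradicting the uniform bound. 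Hence $s>n(1/p-1/q)$. The delicate point, and the one genuine obstacle, is the verification of the two norm equivalences with constants independent of $R$: this needs $\phi$ to be simultaneously localized in space and frequency, which I would arrange by taking $\widehat\phi$ compactly supported and the separation $L_0$ a large fixed constant, and then estimating the Schwartz tails of the off-diagonal interactions uniformly in $R$.
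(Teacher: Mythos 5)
Your proposal is correct and is essentially the paper's own argument (its Lemma \ref{Pre N-LEMMA 1} followed by the deduction in Lemma \ref{N-LEMMA 1}): the same lattice of modulated bumps reducing the embedding to the weighted sequence inequality $\|(c_k)\|_{\ell^p}\lesssim\|(\langle k\rangle^s c_k)\|_{\ell^q}$, then a contradiction at and below the critical exponent. Two points of comparison are worth recording. First, you localize $\phi$ in frequency (compact $\widehat\phi$) and pay for it in space, which is exactly why your ``delicate point'' --- the uniform-in-$R$ lower bound $\|f_R\|_{L^p}\gtrsim\|(c_k)\|_{\ell^p}$ despite overlapping Schwartz tails --- arises; it is fixable by the large-separation almost-orthogonality argument you sketch, but the paper makes the issue vanish by the opposite choice: it takes $\eta$ compactly supported \emph{in space} on the unit lattice, so the bumps are literally disjoint and $\|f\|_{L^p}=\|\eta\|_{L^p}\|(c_\ell)\|_{\ell^p}$ is an exact identity, while the frequency overlap is absorbed into the kernel bound $(1+|x-\ell|)^{-N}(1+|k-\ell|)^{-N}$ and Young's inequality. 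Note also that only one-sided bounds are needed (lower on $L^p$, upper on $M^{p,q}_s$), so your claimed two-sided equivalences are stronger than necessary. Second, your endgame substitutes the explicit truncated sequence $c_k=\langle k\rangle^{-s-n/q}$, $|k|\le R$, and exploits the $(\log R)^{1/p-1/q}$ divergence at the endpoint, whereas the paper substitutes $c_k=(1+|k|)^{-s}|d_k|^{1/p}$ and invokes $\ell^{q/p}$--$\ell^{(q/p)'}$ duality, so that $\{(1+|k|)^{-sp}\}\in\ell^{(q/p)'}$ with $(q/p)'<\infty$ forces $sp(q/p)'>n$, i.e.\ $s>n(1/p-1/q)$ with strictness automatic; your test sequence is in effect the extremizer for that duality, and both versions use $p<q$ at the same place. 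Your computations, including the $q=\infty$ case, check out.
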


\begin{lemma}\label{Pre N-LEMMA 1}
Let $1 \leq  p,q \leq \infty$ and $s \in {\mathbf R}$.
If $M^{p,q}_s({\mathbf R}^n) \hookrightarrow L^p({\mathbf R}^n)$,
then 
\[
\|\{c_k\}\|_{\ell^p}
\lesssim \|\{(1+|k|)^{s}\, c_{k}\}\|_{\ell^q}
\]
for all finitely supported sequences $\{c_k\}_{k \in {\mathbf Z}^n}$
$($that is, $c_k=0$ except for a finite number of $k$'s$).$
\end{lemma}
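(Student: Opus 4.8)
The plan is to feed an explicit trial function into the hypothesis $M^{p,q}_s(\mathbf R^n)\hookrightarrow L^p(\mathbf R^n)$, which by definition means $\|f\|_{L^p}\lesssim\|f\|_{M^{p,q}_s}$ for every $f\in\mathcal S(\mathbf R^n)$. Given a finitely supported sequence $\{c_k\}$, I would synthesize a function whose frequency mass sits in the unit cell centred at $k$ with weight $c_k$, but whose \emph{physical} mass is pushed to well-separated locations, so that the $\ell^q$-weighted modulation norm bounds $f$ from above while the plain $\ell^p$ norm bounds it from below. Concretely, fix $g\in\mathcal S(\mathbf R^n)$ with $\operatorname{supp}\widehat g\subset[-1/4,1/4]^n$ and $g\not\equiv0$, pick a large $R>0$, and set
\[
g_k(x)=e^{ik\cdot x}\,g(x-Rk),\qquad f=\sum_k c_k\,g_k .
\]
Since $\widehat{g_k}(\xi)=e^{iR|k|^2}e^{-iRk\cdot\xi}\widehat g(\xi-k)$ is supported in $k+[-1/4,1/4]^n$, the physical shift does not disturb the frequency localisation.

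For the upper bound I would compute $\|f\|_{M^{p,q}_s}$ directly from the definition, using a window $\varphi$ as in \eqref{window function}. Because $\varphi(\cdot-j)$ is supported in $j+[-1,1]^n$ and $\widehat{g_k}$ in $k+[-1/4,1/4]^n$, only the boundedly many indices with $|k-j|_\infty\le1$ contribute to $\varphi(D-j)f$; moreover each $\varphi(D-j)g_k$ is a fixed Schwartz bump, merely modulated and translated, so its $L^p$ norm is $\lesssim1$ uniformly. Hence $\|\varphi(D-j)f\|_{L^p}\lesssim\sum_{|k-j|_\infty\le1}|c_k|$, and summing the $q$-th powers against $\langle j\rangle^{sq}$, using $\langle j\rangle\approx\langle k\rangle\approx 1+|k|$ on each neighbour block together with the bounded cardinality of the blocks, gives $\|f\|_{M^{p,q}_s}\lesssim\|\{(1+|k|)^s c_k\}\|_{\ell^q}$. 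This step is robust and independent of $R$.

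The harder half, and the main obstacle, is the lower bound $\|f\|_{L^p}\gtrsim\|\{c_k\}\|_{\ell^p}$, where the physical separation is essential. If one used the unshifted atoms $e^{ik\cdot x}g(x)$, then $f=g\cdot P$ with $P=\sum_k c_k e^{ik\cdot x}$ and $\|f\|_{L^p}\approx\|P\|_{L^p(\mathbf T^n)}$, which for $p<2$ can be far smaller than $\|\{c_k\}\|_{\ell^p}$ (the Dirichlet kernel), so only a weaker $\|P\|_{L^p}$-bound would survive. With the shift by $Rk$, the atoms concentrate near the well-separated points $Rk$; restricting to the cube of side $R/2$ about each $Rk$ and controlling the tails of the other atoms through the rapid decay of $g$, an almost-orthogonality (Schur-type) estimate shows that for $R$ large the cross terms are a small fraction of the diagonal contribution $\big(\sum_k|c_k|^p\|g\|_{L^p}^p\big)^{1/p}$, uniformly over the finite support. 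This yields $\|f\|_{L^p}\gtrsim\|\{c_k\}\|_{\ell^p}$ for every $1\le p\le\infty$.

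Combining the two estimates with the inclusion hypothesis gives $\|\{c_k\}\|_{\ell^p}\lesssim\|f\|_{L^p}\lesssim\|f\|_{M^{p,q}_s}\lesssim\|\{(1+|k|)^s c_k\}\|_{\ell^q}$, which is the assertion. The only delicate point is quantifying the tail interaction in the lower bound; everything else is bookkeeping with the definition of the modulation norm, in the spirit of the arguments of \cite{Kobayashi Miyachi Tomita}.
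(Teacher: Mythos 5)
Your proof is correct, but it mirrors the paper's construction rather than reproducing it. The paper tests the embedding with $f(x)=\sum_{\ell}c_{\ell}e^{i\ell\cdot x}\eta(x-\ell)$ where $\eta$ is compactly supported in \emph{physical} space, $\mathrm{supp}\,\eta\subset[-1/2,1/2]^n$: the spatial supports are then exactly disjoint, so $\|f\|_{L^p}=\|\eta\|_{L^p}\|\{c_{\ell}\}\|_{\ell^p}$ is an identity and no separation parameter is needed; the price is paid on the modulation side, where $\widehat{\eta}$ has unbounded support, and the frequency tails are controlled by the integration-by-parts kernel estimate \eqref{(3.2.2)}, namely $\bigl|\int e^{i(x-\ell)\cdot\xi}\varphi(\xi-k)\widehat{\eta}(\xi-\ell)\,d\xi\bigr|\lesssim(1+|x-\ell|)^{-N}(1+|k-\ell|)^{-N}$, followed by Young's inequality in $\ell^q$. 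You make the opposite choice --- compact support of $\widehat{g}$ in frequency --- so your modulation-norm upper bound is immediate (only boundedly many windows see each atom, and the weight transfers since $\langle j\rangle\approx\langle k\rangle$ there), but the $L^p$ lower bound is no longer exact and you need the separation parameter $R$ plus a Schur-type tail estimate. That step, which you rightly flag as the delicate one, does go through uniformly in $\{c_k\}$: on the cube $Q_k$ of side $R/2$ centred at $Rk$ the off-diagonal contribution is, by discrete Young with kernel $R^{n/p-N}|k-m|^{-N}$, at most $CR^{n/p-N}\|\{c_k\}\|_{\ell^p}$ in $\ell^p(k)$, so a fixed large $R$ depending only on $n$ and $g$ makes it a small fraction of the diagonal term $\|g\|_{L^p(Q_0)}\|\{c_k\}\|_{\ell^p}$, and the pointwise inequality $a_k\le(a_k-b_k)_{+}+b_k$ with Minkowski finishes. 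Your diagnosis of why the unshifted frequency-localized choice fails for $p<2$ (Dirichlet-kernel cancellation) is exactly the obstruction the paper's disjoint-support choice circumvents with no free parameter at all. In sum: the paper's route buys an exact $L^p$ computation at the cost of the kernel estimate and a Young-inequality bookkeeping step; yours buys a trivial modulation bound at the cost of the almost-orthogonality argument, and both yield the lemma for all $1\le p,q\le\infty$.
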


\begin{proof}[\sc Proof of Lemma \ref{Pre N-LEMMA 1}]

Let $\eta \in {\mathcal S} ({\mathbf R}^n) \setminus\{0\}$ be such that
$\mathrm{supp}\, \eta \subset [-1/2,1/2]^n$.
For a finitely supported sequence $\{c_\ell\}_{\ell \in {\mathbf Z}^n}$,
we set
\[
f(x)=\sum_{\ell \in {\mathbf Z}^n}
c_{\ell}\, e^{ i \ell\cdot x}\,
\eta(x-\ell).
\]
Let $\varphi \in {\mathcal S} ({\mathbf R}^n)$ be satisfying
\eqref{window function}.
Since
\[
\widehat{f}(\xi)
=\sum_{\ell \in {\mathbf Z}^n}
c_{\ell}\, e^{ i|\ell|^2}\,
e^{- i \ell\cdot\xi}\, \widehat{\eta}(\xi-\ell),
\]
we see that
\begin{equation}\label{(3.2.1)}
\varphi(D-k)f(x)
= \frac{1}{(2 \pi)^n}
\sum_{\ell \in {\mathbf Z}^n}
c_{\ell}\, e^{ i|\ell|^2}
\int_{{\mathbf R}^n}e^{ i(x-\ell)\cdot\xi}\,
\varphi(\xi-k)\, \widehat{\eta}(\xi-\ell)\, d\xi.
\end{equation}
Using
\[
\int_{{\mathbf R}^n}
(1+|x-y|)^{-M}\, (1+|y|)^{-M}\, dy
\lesssim (1+|x|)^{-M},
\]
where $M>n$, and
\begin{align*}
&(x-\ell)^{\alpha}
\int_{{\mathbf R}^n}e^{ i(x-\ell)\cdot\xi}\,
\varphi(\xi-k)\, \widehat{\eta}(\xi-\ell)\,
d\xi
\\
&=\sum_{\alpha_1+\alpha_2=\alpha}
C_{\alpha_1,\alpha_2}
\int_{{\mathbf R}^n}e^{ i(x-\ell)\cdot\xi}\,
(\partial^{\alpha_1}\varphi)(\xi-k)\,
(\partial^{\alpha_2}
\widehat{\eta})(\xi-\ell)\, d\xi,
\end{align*}
we have
\begin{multline}\label{(3.2.2)}
\left| \int_{{\mathbf R}^n}e^{ i(x-\ell)\cdot\xi}\,
\varphi(\xi-k)\, \widehat{\eta}(\xi-\ell)\,
d\xi \right| 
\le C_N(1+|x-\ell|)^{-N}(1+|k-\ell|)^{-N}
\end{multline}
for all $N \ge 1$.
Let $N$ be 
a sufficiently large integer.
Then, by \eqref{(3.2.1)} and \eqref{(3.2.2)},
\[
|\varphi(D-k)f(x)|
\lesssim \sum_{\ell \in {\mathbf Z}^n}
\frac{|c_{\ell}|}{(1+|x-\ell|)^{N}(1+|k-\ell|)^{N}},
\]
which provides
\begin{align*}
\|\varphi(D-k)f\|_{L^p}
&\lesssim \sum_{\ell \in {\mathbf Z}^n}
\frac{|c_{\ell}|}{(1+|k-\ell|)^{N}}\|(1+|\cdot-\ell|)^{-N}\|_{L^p}
\\
&\approx \sum_{\ell \in {\mathbf Z}^n}
\frac{|c_{\ell}|}{(1+|k-\ell|)^{N}}.
\end{align*}
Then, since
\begin{align*}
\|f\|_{M^{p,q}_s}
&=\left\|\left\{(1+|k|)^{s}\|\varphi(D-k)f\|_{L^p}\right\} \right\|_{\ell^q} \\
& \lesssim
\left\|\left\{(1+|k|)^{s}\sum_{\ell \in {\mathbf Z}^n}
\frac{|c_{\ell}|}{(1+|k-\ell|)^{N}}\right\} \right\|_{\ell^q}
\\
& \lesssim
\left\|\left\{\sum_{\ell \in {\mathbf Z}^n}
\frac{(1+|\ell|)^{s}|c_{\ell}|}{(1+|k-\ell|)^{N-|s|}}\right\} \right\|_{\ell^q},
\end{align*}
we have by Young's inequality
\begin{equation}\label{(3.2.3)}
\|f\|_{M^{p,q}_s} 
\lesssim \| \{ (1+|\ell|)^s c_\ell \} \|_{\ell^q}.
\end{equation}
On the other hand,
since $\mathrm{supp}\, \eta(\cdot-\ell) \subset \ell+[-1/2,1/2]^n$
for all $\ell \in {\mathbf R}^n$,
we see that
\begin{equation}\label{(3.2.5)}
\begin{split}
\|f\|_{L^p}
&=\left( \int_{{\mathbf R}^n}\left| \sum_{\ell \in {\mathbf Z}^n}
c_{\ell}\, e^{ i \ell\cdot x}\,
\eta(x-\ell)\right|^p dx \right)^{1/p}
\\
&=\left( \int_{{\mathbf R}^n}\sum_{\ell \in {\mathbf Z}^n}
\left| c_{\ell}\, e^{ i \ell\cdot x}\,
\eta(x-\ell)\right|^p dx \right)^{1/p}
=\|\eta\|_{L^p}\|\{c_{\ell}\}\|_{\ell^p}
\end{split}
\end{equation}
for $p\not=\infty$.
We have easily the same conclusion for $p=\infty$.
By our assumption
$M^{p,q}_s({\mathbf R}^n) \hookrightarrow L^p ({\mathbf R}^n)$
and \eqref{(3.2.3)}--\eqref{(3.2.5)},
we have
\[
\|\{c_{\ell}\}\|_{\ell^p}
\lesssim \|f\|_{L^p} 
\lesssim \|f\|_{M^{p,q}_s}
\lesssim \|\{(1+|\ell|)^{s}\, c_{\ell}\}\|_{\ell^q}.
\]
The proof is complete.

\end{proof}

\begin{proof}[\sc Proof of Lemma \ref{N-LEMMA 1}]
Suppose 
$M^{p,q}_s ({\mathbf R}^n) \hookrightarrow L^p ({\mathbf R}^n)$.
By Lemma \ref{Pre N-LEMMA 1}, we have
\[
\left( \sum_{k \in {\mathbf Z}^n}|c_{k}|^p\right)^{1/p}
\lesssim \|\{(1+|k|)^{s}\, c_{k}\}\|_{\ell^q}
\]
for all finitely supported sequences $\{c_k\}_{k \in {\mathbf Z}^n}$.
Setting $c_k=(1+|k|)^{-s}\, |d_k|^{1/p}$,
we see that it is equivalent to
\[
\sum_{k \in {\mathbf Z}^n}(1+|k|)^{-sp}\, |d_k|
\lesssim \|\{d_{k}\}\|_{\ell^{q/p}}
\]
for all finitely supported sequences $\{d_k\}_{k \in {\mathbf Z}^n}$.
Hence we have
\[
\|\{(1+|k|)^{-s p}\}\|_{\ell^{(q/p)'}}
=\sup \left| \sum_{k \in {\mathbf Z}^n}(1+|k|)^{-s p}\, d_k\right|
\lesssim 1,
\]
where the supremum is taken over all
finitely supported sequences $\{d_k\}_{k \in {\mathbf Z}^n}$
such that $\|\{d_k\}\|_{\ell^{q/p}}=1$.
Note here that $(q/p)'<\infty$ from the assumption $p<q$.
Hence $p,q,s$ must satisfy $sp(q/p)'>n$,
that is, $s>n(1/p-1/q)$.

\end{proof}

\begin{lemma}\label{N-LEMMA 2}
Let $1 \leq  q<p<\infty$
and $s \in {\mathbf R}$.
If $L^p({\mathbf R}^n) \hookrightarrow M^{p,q}_s({\mathbf R}^n)$,
then $s<-n (1/p +1/q -1)$.

\end{lemma}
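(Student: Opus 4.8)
The plan is to disprove the inclusion at the critical index and then invoke monotonicity. Since $M^{p,q}_s(\mathbf{R}^n)$ decreases as $s$ increases, it suffices to prove that the endpoint inclusion $L^p(\mathbf{R}^n)\hookrightarrow M^{p,q}_{s_0}(\mathbf{R}^n)$ \emph{fails}, where $s_0=-n(1/p+1/q-1)$; the stated strict inequality then follows, because $L^p\hookrightarrow M^{p,q}_s$ with $s\ge s_0$ would give $L^p\hookrightarrow M^{p,q}_{s_0}$. To exhibit the failure I would superpose frequency bumps living on widely separated \emph{scales} and centered at widely separated \emph{points}, exploiting the fact that $\|\cdot\|_{M^{p,q}_s}$ is invariant under translations while $\|\cdot\|_{L^p}$ is not.

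Concretely, fix $\Phi\in\mathcal{S}(\mathbf{R}^n)$ with $\widehat{\Phi}\ge 0$, $\widehat{\Phi}\not\equiv 0$ and $\mathrm{supp}\,\widehat{\Phi}\subset\{1\le|\xi|\le 2\}$, and for a rapidly increasing lacunary sequence $\Lambda_j\to\infty$ set $g_j(x)=\Lambda_j^{\,n}\Phi(\Lambda_j x)$, so that $\widehat{g_j}(\xi)=\widehat{\Phi}(\xi/\Lambda_j)$ is supported in the annulus $\{\Lambda_j\le|\xi|\le 2\Lambda_j\}$. A direct computation gives $\|g_j\|_{L^p}\approx\Lambda_j^{\,n/p'}$. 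On the modulation side, only the $\approx\Lambda_j^{\,n}$ lattice points $k$ with $|k|\approx\Lambda_j$ contribute, each with weight $\langle k\rangle^{s_0}\approx\Lambda_j^{\,s_0}$ and with $\|\varphi(D-k)g_j\|_{L^p}\approx|\widehat{\Phi}(k/\Lambda_j)|$; summing and using the Riemann-sum estimate $\sum_k|\widehat{\Phi}(k/\Lambda_j)|^q\approx\Lambda_j^{\,n}$ yields $\|g_j\|_{M^{p,q}_{s_0}}\approx\Lambda_j^{\,s_0+n/q}=\Lambda_j^{\,n/p'}$. Thus at the critical index the two norms of a single bump balance, which is precisely what singles out the threshold $s_0$.

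Next I would superpose. For $J\in\mathbf{N}$, scalars $a_j$, and centers $x_j$, put $f=\sum_{j=1}^{J}a_j\,g_j(\cdot-x_j)$ and write $b_j=a_j\Lambda_j^{\,n/p'}$. Because the annuli are pairwise disjoint, for each $k$ at most one term survives in $\varphi(D-k)f$, and since $\|\varphi(D-k)[g_j(\cdot-x_j)]\|_{L^p}=\|\varphi(D-k)g_j\|_{L^p}$ by translation invariance, the modulation norm decouples and $\|f\|_{M^{p,q}_{s_0}}\gtrsim\|\{b_j\}\|_{\ell^q}$. On the other hand each $g_j(\cdot-x_j)$ concentrates near $x_j$ at scale $\Lambda_j^{-1}$ with Schwartz tails, so choosing the centers $x_j$ sufficiently separated (depending on $J$ and the $\Lambda_j$) makes the spatial supports essentially disjoint and gives $\|f\|_{L^p}\lesssim\|\{b_j\}\|_{\ell^p}$. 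If $L^p\hookrightarrow M^{p,q}_{s_0}$ held, combining these would force $\|\{b_j\}\|_{\ell^q}\lesssim\|\{b_j\}\|_{\ell^p}$ uniformly over all finitely supported sequences; taking $b_j=1$ for $j\le J$ gives $J^{1/q}\lesssim J^{1/p}$, impossible as $J\to\infty$ since $q<p$. This contradiction proves that the endpoint inclusion fails.

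The single-bump asymptotics and the lower bound for $\|f\|_{M^{p,q}_{s_0}}$ are immediate from the disjointness of the annuli together with translation invariance. I expect the main work to be the upper bound $\|f\|_{L^p}\lesssim\|\{b_j\}\|_{\ell^p}$: one must quantify the decay of $g_j(\cdot-x_j)=\Lambda_j^{\,n}\Phi(\Lambda_j(\cdot-x_j))$ and choose the separations $|x_j-x_{j'}|$ large enough that the cross terms in $\|\sum_j a_j g_j(\cdot-x_j)\|_{L^p}^p$ are negligible against the diagonal $\sum_j|b_j|^p$. Since only a family of counterexamples with unbounded ratio is needed, the separations may be chosen afresh for each $J$, which keeps this step elementary; the boundary case $q=1$ causes no difficulty. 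I would deliberately avoid routing the argument through the duality $L^p\hookrightarrow M^{p,q}_s\Leftrightarrow M^{p',q'}_{-s}\hookrightarrow L^{p'}$ together with Lemma \ref{N-LEMMA 1}, since that only yields the weaker bound $s<n(1/p-1/q)$ and in particular misses the sharp threshold $s_0$ in the range $p\le 2$ that this lemma is designed to cover.
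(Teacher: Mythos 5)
Your proposal is correct, but it takes a genuinely different route from the paper's. The paper first extracts from the hypothesis a general discrete necessary condition (Lemma~\ref{Pre N-LEMMA 2}), by testing on a single function built from bumps $a(|\ell|(x-\ell))$ whose width $\sim|\ell|^{-1}$ is tied to the lattice position $\ell$; this yields the annular-block inequality
$\bigl\{\sum_{k\neq0}|k|^{(n(1/p-1)+s)q}\bigl(\sum_{|k|/2\le|\ell|\le2|k|}|c_\ell|^p\bigr)^{q/p}\bigr\}^{1/q}\lesssim\|\{c_k\}\|_{\ell^p}$,
which is then refuted at $s\ge s_0=-n(1/p+1/q-1)$ by the logarithmically tuned sequence $c_k=|k|^{-n/p}(\log|k|)^{-(1+\varepsilon)/p}$ with $(1+\varepsilon)q/p<1$: the $\ell^p$ norm converges while the left-hand side diverges. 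You instead decouple scales completely: disjoint lacunary annuli make the $\ell^q$ sum over $k$ split exactly across the $J$ bumps, spatial separation (chosen afresh for each $J$) makes the $L^p$ norm split, and endpoint failure reduces to the failure of $\|\{b_j\}\|_{\ell^q}\lesssim\|\{b_j\}\|_{\ell^p}$ for $q<p$, with monotonicity in $s$ upgrading endpoint failure to the strict inequality. Your route is more elementary at the endpoint (no log-tuned sequence; the obstruction is visibly $\ell^p\not\subset\ell^q$ over $J$ decoupled scales, with constants uniform in $J$), while the paper's route buys the stronger, reusable discrete inequality that powers its whole necessity analysis. Two points of hygiene: your claim $\|\varphi(D-k)g_j\|_{L^p}\approx|\widehat{\Phi}(k/\Lambda_j)|$ is not two-sided for every $k$ (the window samples a $\Lambda_j^{-1}$-neighborhood, so the upper bound can fail where $\widehat{\Phi}$ vanishes), but you only need the lower bound on $\{k:\widehat{\Phi}(k/\Lambda_j)\ge\delta\}$, a set of $\approx\Lambda_j^n$ points on which the perturbative error $O(\Lambda_j^{-1})$ is harmless once $\Lambda_j\ge\Lambda_0(\Phi,\varphi,\delta)$, uniformly in $j$; and your closing remark is accurate --- dualizing to $M^{p',q'}_{-s}\hookrightarrow L^{p'}$ and applying Lemma~\ref{N-LEMMA 1} only yields $s<n(1/p-1/q)$, which is strictly weaker than $s<s_0$ precisely when $p<2$, the range where Lemma~\ref{N-LEMMA 2} is sharp.
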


\begin{lemma}\label{Pre N-LEMMA 2}
Let $1 \leq p,q <\infty$ and $s \in {\mathbf R}$.
If $L^p({\mathbf R}^n) \hookrightarrow M^{p,q}_s({\mathbf R}^n)$,
then 
$$
\left\{ \sum_{k \neq 0}|k|^{(n(1/p-1)+s)q}
\left( \sum_{|k|/2 \leq |\ell| \le 2|k|}
|c_\ell|^p \right)^{q/p} \right\}^{1/q}
\lesssim \left( \sum_{k \neq 0}
|c_{k}|^p\right)^{1/p}
$$
for all finitely supported sequences
$\{c_k\}_{k \in {\mathbf Z}^n\setminus\{0\}}$.

\end{lemma}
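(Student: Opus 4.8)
The plan is to establish the sequence inequality by testing the assumed embedding $L^p(\mathbf R^n)\hookrightarrow M^{p,q}_s(\mathbf R^n)$, which gives $\|f\|_{M^{p,q}_s}\lesssim\|f\|_{L^p}$, against an explicit family of functions manufactured from $\{c_k\}$, exactly in the spirit of the proof of Lemma \ref{Pre N-LEMMA 1}. The factor-$2$ annulus $|k|/2\le|\ell|\le 2|k|$ and the weight $|k|^{n(1/p-1)}=|k|^{-n/p'}$ both indicate that the construction should be organized by dyadic \emph{frequency shells}. I fix a Schwartz $\Phi$ whose Fourier transform is supported in the unit shell $\{1/2\le|\xi|\le 2\}$ and, for each $\ell\neq0$, I use a copy $g_\ell$ that is $L^p$-normalized (so $\|g_\ell\|_{L^p}\approx1$), has $\widehat{g_\ell}$ supported in the shell $\{|\ell|/2\le|\xi|\le 2|\ell|\}$, and hence has spatial width $\approx|\ell|^{-1}$. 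The packets belonging to one shell are then translated to distinct positions, of which there is just enough room ($\approx|\ell|^{n}$ translates in a fixed cube) to keep their supports essentially disjoint. The test function is $f=\sum_{\ell\neq0}c_\ell\,g_\ell$.

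Two estimates must then be proved. The first is the upper bound $\|f\|_{L^p}\lesssim\|\{c_k\}\|_{\ell^p}$: within each shell the packets have disjoint supports and a common $L^p$-profile, so the $L^p$-norm decouples into an $\ell^p$-sum of the $c_\ell$, just as the elementary computation \eqref{(3.2.5)} does in Lemma \ref{Pre N-LEMMA 1}; summing the shells costs nothing because the shells occupy disjoint frequency bands. The second is the lower bound for the modulation norm. Since $\widehat{g_\ell}$ fills the whole shell $\{|\xi|\approx|\ell|\}$, every packet with $|\ell|\approx|k|$ reaches the window $\varphi(\cdot-k)$, so a single window simultaneously feels the entire shell around $k$; the aim is to show
\[
\|\varphi(D-k)f\|_{L^p}\gtrsim |k|^{n(1/p-1)}\Big(\sum_{|k|/2\le|\ell|\le 2|k|}|c_\ell|^p\Big)^{1/p},
\]
the prefactor $|k|^{n(1/p-1)}$ being produced by the $L^p$-scaling of the band-limited profile, the same scaling that underlies Lemma \ref{DILATION}. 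Raising to the $q$-th power, multiplying by $|k|^{sq}$ and summing over $k$ then reproduces the left-hand side of the assertion, and $\|f\|_{M^{p,q}_s}\lesssim\|f\|_{L^p}$ finishes the proof.

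The main obstacle is the displayed lower bound. The $L^p$ upper bound only requires disjointness and is routine, but here I must show that a single window $\varphi(D-k)$ recovers the \emph{full} $\ell^p$-mass of the shell around $k$, whereas after windowing the individual packets are spread to width $\approx1$ and overlap, carrying the oscillatory phases $e^{i k\cdot x_\ell}$. Isolating the diagonal term and proving that the off-diagonal interactions cannot degrade the clean $\ell^p$-structure (equivalently, a sharp Nikol'skii-type lower bound for functions band-limited to a shell of radius $|k|$) is the delicate point. This is precisely where the kernel-decay and almost-orthogonality estimates of \cite{Kobayashi Miyachi Tomita}, the analogues of \eqref{(3.2.1)}--\eqref{(3.2.2)}, are needed, and it is what forces the dyadic-shell bookkeeping in place of the integer-translate bookkeeping used for Lemma \ref{Pre N-LEMMA 1}.
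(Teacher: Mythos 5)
Your construction breaks at exactly the point you yourself flag as delicate, and it breaks irreparably: the lower bound
\[
\|\varphi(D-k)f\|_{L^p}\gtrsim |k|^{n(1/p-1)}\Bigl(\sum_{|k|/2\le|\ell|\le 2|k|}|c_\ell|^p\Bigr)^{1/p}
\]
is \emph{false} for packets of one shell packed into a fixed cube. If the $\approx|k|^n$ packets band-limited to the shell $\{|\xi|\approx|k|\}$ sit at positions separated by only $\approx|k|^{-1}$ inside a cube of side $O(1)$, then after applying $\varphi(D-k)$ each of them is smeared to spatial width $\approx1$ about essentially the same point: writing $\varphi(D-k)f=e^{ik\cdot x}h$, the function $h$ is band-limited to a unit cube, hence varies on scale $1$, and on a fixed cube it is governed (up to rapidly decaying tails) by $O(1)$ effective samples. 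A single unit-size frequency window therefore sees only the coherent sum $\bigl|\sum_\ell c_\ell A_\ell e^{-ik\cdot x_\ell}\bigr|$ of the nearly identical smeared profiles, not the incoherent quantity $(\sum_\ell|c_\ell|^p)^{1/p}$; since the number of coefficients $\approx|k|^n$ vastly exceeds that $O(1)$ effective dimension, there are unit $\ell^p$-vectors of coefficients for which the left side is $O(|k|^{-M})$ while the claimed right side is $\approx|k|^{n(1/p-1)}$. This is the uncertainty principle, and no almost-orthogonality estimate in the style of \eqref{(3.2.1)}--\eqref{(3.2.2)} can rescue it: those estimates require the centers to be $1$-separated after windowing. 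A secondary gap is your cross-shell step for the upper bound (``summing the shells costs nothing because the shells occupy disjoint frequency bands''): disjoint Fourier supports do not give $\ell^p$-decoupling of $L^p$-norms for $p\neq2$, and the Littlewood--Paley substitute fails outright at $p=1$, which the lemma includes.

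The repair is to give each coefficient its own $1$-separated spatial site, and this is what the paper does --- moreover in a way that avoids band-limitation and almost-orthogonality altogether. It takes a \emph{compactly supported} bump $a$ with $\mathrm{supp}\,a\subset[-\delta/8,\delta/8]^n$ and $|\widehat a\,|\ge C$ on $|\xi|\le2$, and sets $f(x)=\sum_{\ell\neq0}c_\ell\,|\ell|^{n/p}a(|\ell|(x-\ell))$, the packet carrying $c_\ell$ sitting at the lattice point $\ell$ with width $|\ell|^{-1}$. So in the paper the index $\ell$ of the annulus $|k|/2\le|\ell|\le2|k|$ is a \emph{spatial position}, not a frequency shell: the matching with the window $k$ comes from $|\widehat a(k/|\ell|)|\ge C$ when $1/2\le|k|/|\ell|\le2$. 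Exact spatial disjointness gives $\|f\|_{L^p}=\|a\|_{L^p}\|\{c_\ell\}\|_{\ell^p}$ with no Fourier-side decoupling needed, and the modulation norm is bounded below via the equivalent norm $\|f*(M_k\Psi)\|_{L^p}$ with a spatially compactly supported window $\Psi\equiv1$ near $0$ (Fact 1), so that on the cube at $m$ the convolution evaluates \emph{exactly} to $c_m|m|^{n/p}|m|^{-n}\widehat a(k/|m|)$ --- no off-diagonal terms ever arise. Your outline could be salvaged by moving the shell-$|k|$ packets to the $1$-separated sites $\ell$ themselves (cross-shell terms then vanish by the disjointness of the supports of $\widehat{g_\ell}$, and within a shell the smeared packets stay $1$-separated), but as written, with all translates of a shell confined to a fixed cube, the key estimate fails.
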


\begin{proof}[\sc Proof of Lemma \ref{Pre N-LEMMA 2}]
Let $0< \delta <1$ and $a \in {\mathcal S}({\mathbf R}^n)$
be such that
\begin{align*}
{\rm supp~}a \subset [- \delta/8,\delta/8]^n, ~~ 
\| a \|_{L^\infty} \leq 1,~~{\rm and}~~
|\widehat{a}(\xi)| \geq C>0 ~{\rm on}~ |\xi| \leq 2
\end{align*}
(see, for example,  \cite[Lemma 4.3]{Kobayashi Miyachi Tomita}).
For a finitely supported sequence $\{ c_n \}_{\ell \in {\mathbf Z}^n \setminus \{ 0 \}}$,
we define $f \in {\mathcal S}({\mathbf R}^n)$ by
$$
f(x) = \sum_{\ell \not=0} c_\ell |\ell|^{n/p} a(|\ell|(x- \ell)).
$$
We first estimate $\| f \|_{L^p}$.
Since
$$
{\rm supp~}a(|\ell|(\cdot - \ell)) \subset \ell + [- \delta/(8|\ell|), \delta/(8|\ell|)]^n,
$$
we have
\begin{align*}
\| f \|^p_{L^p}
&= \int_{{\mathbf R}^n} \Big| \sum_{\ell \not= 0}
c_\ell |\ell |^{n/p} a(|\ell|(x- \ell)) \Big|^p dx \\
&= \int_{{\mathbf R}^n} \sum_{\ell \not= 0} |c_\ell|^p |\ell|^n 
|a(|\ell| (x-\ell))|^p dx 
= \| a \|^p_{L^p} \sum_{\ell \not= 0} |c_\ell|^p.
\end{align*}
Next, we estimate $\| f \|_{M^{p,q}_s}$.
We note the following facts:
\begin{enumerate}
\item[\bf Fact 1.] Let $\Psi \in {\mathcal S}({\mathbf R}^n)$ be such that
$
\Psi =1$ on $[-\delta/4, \delta/4]^n$, 
${\rm supp~} \Psi \subset [-3 \delta/8, 3 \delta/8]^n,$ and
$|\widehat{\Psi}| \geq C>0$ on $[-2,2]^n.
$
Then we have
$$
\| f \|_{M^{p,q}_s} \approx 
\left( \sum_{k \in {\mathbf Z}^n}  (1+|k|)^{sq} \| f*(M_k \Psi) \|^q_{L^p} \right)^{1/q},
$$
where $M_k \Psi (x)= e^{ik \cdot x} \Psi(x)$.

\item[\bf Fact 2.]
For all $\ell \not=0 $, we have
$$
{\rm supp~} a(|\ell| (\cdot - \ell))
\subset \ell + [- \delta /8|\ell|, \delta / 8 |\ell|]^n
\subset \ell + [-\delta/8, \delta/8]^n.
$$

\item[\bf Fact 3.]
For all $ x \in m +[- \delta/8, \delta/8]^n, m \in {\mathbf Z}^n$, we have
$$
{\rm supp~} \Psi(x- \cdot)
\subset x+ [-3 \delta /8, 3 \delta/8]^n
\subset m+[-\delta/2,\delta/2]^n.
$$

\end{enumerate}
From these facts, we have
\begin{align*}
&\|(M_k\Psi)*f\|_{L^p}^p \\
&\ge \sum_{m \in {\mathbf Z}^n}\int_{m+[- \frac{\delta}{8},\frac{\delta}{8}]^n}
|(M_k\Psi)*f(x)|^p\, dx
\\
&=\sum_{m \in {\mathbf Z}^n}\int_{m+[- \frac{\delta}{8},\frac{\delta}{8}]^n}
\left| \int_{{\mathbf R}^n}e^{i k\cdot(x-y)}\,
\Psi(x-y)\, \sum_{\ell \neq 0}
c_\ell\, |\ell|^{\frac{n}{p}}\, a(|\ell|(y-\ell))\,
dy \right|^p dx
\\
&=\sum_{m \neq 0}\int_{m+[- \frac{\delta}{8},\frac{\delta}{8}]^n}
\left| \int_{{\mathbf R}^n}e^{- i k\cdot y}\,
\Psi(x-y)\, c_m\, |m|^{\frac{n}{p}}\,
a(|m|(y-m))\, dy \right|^p dx.
\end{align*}
If $x \in m+[-\delta/8,\delta/8]^n$ and 
$y \in \mathrm{supp}\, a(|m|(\cdot-m))$,
then 
$$x-y \in (m+[-\delta/8,\delta/8]^n)
-(m+[-\delta/8,\delta/8]^n)=[-\delta/4,\delta/4]^n,$$
and so $\Psi(x-y)=1$.
Hence,
\begin{align*}
&\sum_{m \neq 0}\int_{m+[- \frac{\delta}{8},\frac{\delta}{8}]^n}
\left| \int_{{\mathbf R}^n}e^{- i k\cdot y}\,
\Psi(x-y)\, c_m\, |m|^{\frac{n}{p}}\,
a(|m|(y-m))\, dy \right|^p dx
\\
&=\sum_{m \neq 0}\int_{m+[- \frac{\delta}{8},\frac{\delta}{8}]^n}
\left| \int_{{\mathbf R}^n}e^{- i k\cdot y}\,
c_m\, |m|^{\frac{n}{p}}\,
a(|m|(y-m))\, dy \right|^p dx
\\
&=\sum_{m \neq 0}\int_{m+[- \frac{\delta}{8},\frac{\delta}{8}]^n}
\left| c_m\, |m|^{\frac{n}{p}}\, |m|^{-n}\,
\widehat{a} \left(\frac{k}{|m|}\right) \right|^p dx
\\
&=\left( \frac{\delta}{4} \right)^{n}\sum_{m \neq 0}
|c_m|^p\, |m|^{n-pn}\,
\left| \widehat{a} \left( \frac{k}{|m|} \right) \right|^p.
\end{align*}
Moreover, using $|\widehat{a}(\xi)| \geq C>0$ 
for all  $1/2 \leq |\xi| \leq 2$,
we obtain 
\begin{align*}
\|(M_k\Psi)*f\|_{L^p}^p 
&\ge (\delta/4)^{n}\sum_{m \neq 0}
|c_m|^p\, |m|^{n-pn}\,
|\widehat{a}(k/|m|)|^p
\\
&\ge (\delta/4)^{n}\sum_{|k|/2 \leq |m| \le 2|k|}
|c_m|^p\, |m|^{n-pn}\,
|\widehat{a}(k/|m|)|^p
\\
&\gtrsim \sum_{|k|/2 \leq  |m| \le 2|k|}
|c_m|^p\, |m|^{n-pn}
\gtrsim |k|^{n-pn}
\sum_{|k|/2 \leq |m| \le 2|k|}|c_m|^p
\end{align*}
for all $k \not= 0$. Then
\begin{equation*}
\begin{split}
\|f\|_{M^{p,q}_s}
&\approx \left( \sum_{k \in {\mathbf Z}^n}(1+|k|)^{sq}
\|(M_k\Psi)*f\|_{L^p}^q \right)^{1/q}
\\
&\gtrsim \left\{ \sum_{k \neq 0}(1+|k|)^{sq}
\left( |k|^{n-pn} \sum_{|k|/2 \leq |m| \le 2|k|}
|c_m|^p \right)^{q/p} \right\}^{1/q}
\\
&\gtrsim \left\{ \sum_{k \neq 0}|k|^{(n(1/p-1)+s)q}
\left( \sum_{|k|/2 \leq |m| \le 2|k|}
|c_m|^p \right)^{q/p} \right\}^{1/q}.
\end{split}
\end{equation*}
Therefore, by our assumption 
$L^p({\mathbf R}^n) \hookrightarrow M^{p,q}_s({\mathbf R}^n)$,
we have
\begin{multline*}
\left\{ \sum_{k \neq 0}|k|^{(n(1/p-1)+s)q}
\left( \sum_{|k|/2 \leq |m| \le 2|k|}
|c_m|^p \right)^{q/p} \right\}^{1/q} \\
\lesssim \|f\|_{M^{p,q}_s} 
\lesssim \|f\|_{L^p} 
 \lesssim \left(\sum_{\ell \neq 0}
|c_{\ell}|^p \right)^{1/p}.
\end{multline*}

\end{proof}

\begin{proof}[\sc Proof of Lemma \ref{N-LEMMA 2}]
Suppose that $s \ge -n(1/p+1/q-1)$ contrary to our claim.
Noting that $q/p<1$ from the assumption $q<p$, take $\varepsilon >0$ such that
$(1+\varepsilon)q/p<1$
and define 
$\{c_k\}_{k \in {\mathbf Z}^n\setminus\{0\}}$
by
$$
c_k=
\begin{cases}
|k|^{-n/p}\left(\log |k|\right)^{-(1+\varepsilon)/p}
&if \quad |k| \ge N, \\
0 &if \quad |k|<N,
\end{cases}
$$
where $N$ is a sufficiently large.
Note also that
$\{|k|^{-n/r}(\log |k|)^{-\alpha/r}\}_{|k| \ge N}
\in \ell^r$ if $\alpha>1$,
and $\{|k|^{-n/r}(\log |k|)^{-\alpha/r}\}_{|k| \ge N}
\notin \ell^r$ if $\alpha \le 1$,
where $r<\infty$ 
(see, for example, \cite[Remark 4.3]{Sugimoto-Tomita-2}).
Thus 
\begin{equation*}
\left(\sum_{k \neq 0}|c_k|^p \right)^{1/p}
=\left\{\sum_{|k| \ge N}
\left( |k|^{-n/p}\left(\log |k|\right)^{-(1+\varepsilon)/p}
\right)^p\right\}^{1/p}<\infty.
\end{equation*}
On the other hand,
\begin{equation*}
\left\{ \sum_{k \neq 0}|k|^{(n(1/p-1)+s)q}
\left( \sum_{|k|/2 \le |\ell| \le 2|k|}
|c_\ell|^p \right)^{q/p} \right\}^{1/q}=\infty.
\end{equation*}
In fact, since $n(1/p-1)+s \ge -n/q$
and $(1+\varepsilon)q/p<1$, we see that
\begin{align*}
&\left\{ \sum_{k \neq 0}|k|^{(n(1/p-1)+s)q}
\left( \sum_{|k|/2 \le |\ell| \le 2|k|}
|c_\ell|^p \right)^{q/p} \right\}^{1/q}
\\
&\ge \left\{ \sum_{|k| \ge 2N}|k|^{(n(1/p-1)+s)q}
\left( \sum_{|k|/2 \le |\ell| \le 2|k|}
\left( |\ell|^{-n/p}\, (\log |\ell|)^{-(1+\varepsilon)/p}
\right)^p \right)^{q/p} \right\}^{1/q}
\\
&\gtrsim \left\{ \sum_{|k| \ge 2N}|k|^{(n(1/p-1)+s)q}
(\log|k|)^{-(1+\varepsilon)q/p}\right\}^{1/q}
\\
&\gtrsim \left\{ \sum_{|k| \ge 2N}
\left(|k|^{-n/q}
(\log|k|)^{-\{(1+\varepsilon)q/p\}/q}
\right)^q\right\}^{1/q}=\infty.
\end{align*}
However, this contradicts
Lemma \ref{Pre N-LEMMA 2}.

\end{proof}

\begin{proof}[\sc Proof of Theorem \ref{MAIN THEOREM} (\lq\lq only if" part)]

%

Suppose $M^{p,q}({\mathbf R}^n) \hookrightarrow L^p_s({\mathbf R}^n)$.
Then we have $s \leq  n \nu_2(p,q)$ by Corollary \ref{LpMpq}.
Particularly in the case $p<q$, we have
$s< -n(1/p-1/q)=n\nu_2(p,q)$
for $p\leq2$ by Lemma \ref{N-LEMMA 1},
and
$
s< -n(1/p^\prime+ 1/q^\prime -1)
= n(1/p+1/q-1)=n\nu_2(p,q)
$
for $2\leq p$ by the dual statement of Lemma \ref{N-LEMMA 2}.
In the case $p=\infty$,
since ${\mathcal M}^{\infty,q}({\mathbf R}^n)
\hookrightarrow M^{\infty,q}({\mathbf R}^n)
\hookrightarrow L^{\infty}_s({\mathbf R}^n)
$,
we have
$$
L^1_{-s}({\mathbf R}^n) \hookrightarrow ((L^1_{-s}({\mathbf R}^n))^*)^*
= (L^\infty_s({\mathbf R}^n))^* \hookrightarrow
({\mathcal M}^{\infty,q}({\mathbf R}^n))^*
= M^{1,q^\prime}({\mathbf R}^n),
$$
which means that
 $L^1({\mathbf R}^n) \hookrightarrow M^{1,q^\prime}_{s}({\mathbf R}^n)$.
Then we have
$
s <- n/q^\prime =n(1/q-1)
=n \nu_2(\infty,q)
$
for $q\not=1$ by Lemma \ref{LOCAY HARDY h1}.
All of these results yields the necessity of conditions (1)--(4).
\end{proof}

\section{Applications}

\subsection{Unimodular Fourier multiplier}

We consider the
unimodular Fourier multiplier
$e^{i|D|^\alpha}, \alpha \geq 0$,
defined by
\begin{equation*}
e^{i|D|^\alpha} f (x)= \int_{{\mathbf R}^n}
e^{i|\xi|^\alpha} \widehat{f}(\xi) e^{ix \cdot \xi} d \xi, \quad f \in {\mathcal S}({\mathbf R}^n).
\label{DEF OF FOURIER MULTIPLIER}
\end{equation*}
The operator
$e^{i|D|^\alpha} $
has an intimate connection with
the solution $u(t,x)$ of initial value problem
for the dispersive equation
\begin{equation*}
\begin{cases}
i \partial_t u + |\Delta|^{\alpha/2} u =0, \\
u(0,x) =f(x),
\end{cases}
\end{equation*}
$(t,x) \in {\mathbf R} \times {\mathbf R}^n$.
The boundedness of $e^{i|D|^\alpha}$
on several function spaces
has been studied extensively by many authors.
Concerning the $L^p$-Sobolev spaces $L^p_s$
and the modulation spaces $M^{p,q}_s$,
the following theorems
are known.

\begin{previousthmA}[{Miyachi \cite{Miyachi 1}}]
Let  $1 < p < \infty,s \in {\mathbf R}$ and $\alpha >1$.
Then $e^{i|D|^\alpha}$ is bounded from
$L^p_s({\mathbf R}^n)$ to $L^p({\mathbf R}^n)$
if and only if $s \geq \alpha n |1/p -1/2|$.
\end{previousthmA}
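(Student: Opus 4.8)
The plan is to convert the mapping property into the $L^p$-boundedness of a single Fourier multiplier and then to exploit the oscillation of $e^{i|\xi|^\alpha}$ through stationary phase together with analytic interpolation. Writing $\langle D\rangle^s$ for the operator $f\mapsto(\langle\cdot\rangle^s\widehat f)^\vee$, we have $\|f\|_{L^p_s}=\|\langle D\rangle^s f\|_{L^p}$, so $e^{i|D|^\alpha}:L^p_s\to L^p$ is bounded if and only if the Fourier multiplier
\[
T_s=e^{i|D|^\alpha}\langle D\rangle^{-s},\qquad\text{with symbol}\quad m_s(\xi)=e^{i|\xi|^\alpha}\langle\xi\rangle^{-s},
\]
is bounded on $L^p$. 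The low-frequency part $|\xi|\le 2$ contributes a smooth, compactly supported multiplier that is harmless on every $L^p$, so the whole matter rests on the high-frequency behaviour of $m_s$.

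For sufficiency I would first record the trivial endpoint $p=2$: since $|m_0(\xi)|=1$, the operator $T_0=e^{i|D|^\alpha}$ is bounded on $L^2$ by Plancherel, matching $s=0=\alpha n|1/2-1/2|$. The serious endpoint is the Hardy-space one, where one must show that $T_{\alpha n/2}=e^{i|D|^\alpha}\langle D\rangle^{-\alpha n/2}$ maps $h^1\to L^1$. This is the heart of the argument, and I would carry it out by a Littlewood--Paley decomposition $T_{\alpha n/2}=\sum_{j\ge0}T_j$ in which $T_j$ has symbol $e^{i|\xi|^\alpha}\langle\xi\rangle^{-\alpha n/2}\psi_j(\xi)$ supported in $|\xi|\approx 2^j$. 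Rescaling $\xi=2^j\eta$ and applying stationary phase to the phase $2^jx\cdot\eta+2^{j\alpha}|\eta|^\alpha$ shows that the kernel $K_j$ concentrates on the annulus $|x|\approx 2^{j(\alpha-1)}$, the gain $(2^{j\alpha})^{-n/2}$ coming from the nondegenerate Hessian of $|\eta|^\alpha$ on the support; the weight $2^{-j\alpha n/2}$ is tuned exactly so that the resulting bounds are summable in $j$ and yield a uniform estimate of $T_{\alpha n/2}$ on $h^1$-atoms (equivalently, a H\"ormander integral condition on $K=\sum_j K_j$). Interpolating analytically in the family $z\mapsto e^{i|D|^\alpha}\langle D\rangle^{-z}$ between the $L^2$ bound at $\mathrm{Re}\,z=0$ and the $h^1\to L^1$ bound at $\mathrm{Re}\,z=\alpha n/2$ gives boundedness on $L^p$ for $1<p\le2$ with $s=\alpha n(1/p-1/2)$; duality (the adjoint symbol $e^{-i|\xi|^\alpha}\langle\xi\rangle^{-s}$ is of the same type) then covers $2\le p<\infty$, and enlarging $s$ only improves the weight. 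Together these give sufficiency of $s\ge\alpha n|1/p-1/2|$.

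For necessity I would test $T_s$ against functions whose Fourier transforms live in a single annulus. Taking $\widehat{f_j}(\xi)=\psi(\xi/2^j)$ one finds $\|f_j\|_{L^p_s}\approx 2^{js}\,2^{jn(1-1/p)}$, while the stationary-phase description above gives $\|e^{i|D|^\alpha}f_j\|_{L^p}\approx 2^{jn(1-\alpha/2)}2^{j(\alpha-1)n/p}$, reflecting the spreading of mass over the shell $|x|\approx 2^{j(\alpha-1)}$. Comparing the two powers of $2^j$ as $j\to\infty$ forces $s\ge\alpha n(1/p-1/2)$, which is $\alpha n|1/p-1/2|$ for $p\le2$; the range $p\ge2$ follows by duality since $|1/p-1/2|=|1/p'-1/2|$.

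The main obstacle is the endpoint kernel analysis: making the stationary-phase estimate for $K_j$ precise enough---both its size on the annulus $|x|\approx 2^{j(\alpha-1)}$ and its rapid decay off it---to verify the atomic (or H\"ormander) condition uniformly in $j$ with summable constants, and to produce the matching lower bound used in the necessity part. Everything else (the reduction, the $L^2$ endpoint, the complex interpolation, and the test-function computations) is comparatively routine once this estimate is in hand.
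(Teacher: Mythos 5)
This is a quoted result: the paper states Theorem A with a citation to Miyachi's 1980 paper and gives no proof of it, so there is no in-paper argument to compare against; your sketch has to be measured against the classical proof in the literature, which it in fact reconstructs faithfully. The reduction to the multiplier $e^{i|\xi|^{\alpha}}\langle\xi\rangle^{-s}$, the trivial $L^2$ endpoint, the Hardy-space endpoint at $s=\alpha n/2$ via dyadic decomposition and stationary phase, analytic interpolation in the style of Fefferman--Stein, duality for $p\geq 2$, and the annular test functions spreading over $|x|\approx 2^{j(\alpha-1)}$ for sharpness are all exactly the standard route (Miyachi works with $H^p$, $p\le 1$, rather than $h^1$, but that is immaterial here). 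Your exponent bookkeeping is right: comparing $\|f_j\|_{L^p_s}\approx 2^{js}2^{jn(1-1/p)}$ with $\|e^{i|D|^{\alpha}}f_j\|_{L^p}\gtrsim 2^{jn(1-\alpha/2)}2^{j(\alpha-1)n/p}$ forces $s\geq \alpha n(1/p-1/2)$, and the nondegeneracy of the Hessian of $|\xi|^{\alpha}$ uses precisely $\alpha>1$ (at $\alpha=1$ the radial eigenvalue vanishes, which is why the wave multiplier loses only $(n-1)|1/p-1/2|$).

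One caution about the single delicate step, which you flag but describe slightly incorrectly: with the weight $\langle\xi\rangle^{-\alpha n/2}$ the kernel bounds are \emph{not} summable in $j$. Stationary phase gives $|K_j|\approx 2^{-j(\alpha-1)n}$ on a shell of measure $\approx 2^{j(\alpha-1)n}$, so $\|K_j\|_{L^1}\approx 1$ uniformly --- exactly borderline. The $h^1\to L^1$ bound therefore cannot come from summing kernel norms; it requires the atomic argument: for an atom $a$ on a cube of side $r$, split the sum at $2^j r\approx 1$, use the moment condition of $a$ (one derivative of $K_j$, gaining a factor $2^j r$) for the small-$j$ range, and the $L^2$ estimate together with the localization of $K_j$ to $|x|\approx 2^{j(\alpha-1)}$ (plus rapid decay off that shell, by non-stationary phase) for the large-$j$ range; the two resulting geometric series converge. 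A second small point: the low-frequency part deserves a word when $\alpha$ is not an even integer, since $|\xi|^{\alpha}$ is not smooth at the origin; a dyadic Mikhlin estimate with gain $2^{-j\alpha}$ on $|\xi|\sim 2^{-j}$ disposes of it. With these two points made precise, your outline is a complete and correct plan, matching the proof in the cited literature.
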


\begin{previousthmB}[{B\'{e}nyi-Gr\"{o}chenig-Okoudjou-Rogers
\cite{Benyi et all}}]
Let $1 \leq p,q \leq \infty,s \in {\mathbf R}$ and $0 \leq \alpha \leq  2$.
Then $e^{i|D|^\alpha}$ is bounded
from $M^{p,q}({\mathbf R}^n)$ to $M^{p,q}({\mathbf R}^n)$.
\end{previousthmB}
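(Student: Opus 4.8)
The plan is to exploit the structure of the $M^{p,q}$ norm: since $e^{i|D|^\alpha}$ is a Fourier multiplier, it commutes with each frequency localization $\varphi(D-k)$, so it suffices to understand its action on a single frequency box. Concretely, I would reduce the claim to the uniform kernel estimate
$$
\sup_{k \in \mathbf{Z}^n} \big\| \varphi(D-k)\, e^{i|D|^\alpha} \big\|_{L^p \to L^p} \lesssim 1,
$$
together with the elementary fact that the symbol $\varphi(\xi-k)\,e^{i|\xi|^\alpha}\,\varphi(\xi-j)$ vanishes unless $|k-j|_\infty \le 2$, since the supports of $\varphi(\cdot-k)$ and $\varphi(\cdot-j)$ are then disjoint. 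Granting these, inserting $\sum_{j}\varphi(D-j)=\mathrm{Id}$ from \eqref{window function} gives $\varphi(D-k)e^{i|D|^\alpha}f=\sum_{|j-k|_\infty\le 2}\varphi(D-k)e^{i|D|^\alpha}\varphi(D-j)f$, whence $\|\varphi(D-k)e^{i|D|^\alpha}f\|_{L^p}\lesssim\sum_{|j-k|_\infty\le 2}\|\varphi(D-j)f\|_{L^p}$; taking the $\ell^q$ norm in $k$ and using that convolution with a finitely supported sequence is bounded on $\ell^q$ yields $\|e^{i|D|^\alpha}f\|_{M^{p,q}}\lesssim\|f\|_{M^{p,q}}$.

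The heart of the matter is the uniform estimate on the inverse Fourier transform (in $L^1$) of the symbol restricted to each box, and here the condition $\alpha\le 2$ enters decisively. For $|k|$ large I would linearize the phase on $k+[-1,1]^n$: writing $v_k=\nabla(|\cdot|^\alpha)(k)$,
$$
|\xi|^\alpha=\big(|k|^\alpha-v_k\cdot k\big)+v_k\cdot\xi+R_k(\xi),\qquad R_k(\xi)=|\xi|^\alpha-|k|^\alpha-v_k\cdot(\xi-k).
$$
The constant contributes only a unimodular scalar, and $e^{iv_k\cdot\xi}$ acts as a physical-space translation by $v_k$, an isometry of $L^p$. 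It then remains to control the operator with symbol $\varphi(\xi-k)\,e^{iR_k(\xi)}$. Since $\partial^\beta|\xi|^\alpha=O(|\xi|^{\alpha-|\beta|})$, on the box one has $\partial^\beta R_k=O(|k|^{\alpha-2})$ for every $|\beta|\ge 1$ (the subtracted linear term kills the first order, and the Hessian bound handles the rest), which is uniformly bounded \emph{exactly because} $\alpha\le 2$. Consequently $\varphi(\cdot-k)\,e^{iR_k}$ is smooth, supported in a fixed cube, with all derivatives bounded uniformly in $k$; a routine integration by parts gives $\|(\varphi(\cdot-k)e^{iR_k})^\vee\|_{L^1}\lesssim 1$, and Young's inequality yields the desired uniform $L^p\to L^p$ bound.

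The remaining issue is the small-frequency region, where for $0<\alpha<2$ the phase $|\xi|^\alpha$ fails to be smooth at the origin and the linearization breaks down. I would treat the finitely many boxes with $|k|\le R_0$ together, using a fixed cutoff $\chi$ with $\chi=1$ on $\{|\xi|\le R_0+2\}$, and establish directly that $(\chi\,e^{i|\cdot|^\alpha})^\vee\in L^1(\mathbf{R}^n)$. This is a standard oscillatory-integral estimate: away from the origin the phase $|\xi|^\alpha+x\cdot\xi$ has no critical point for $|x|$ large, giving rapid decay by non-stationary phase, while the homogeneity of $|\xi|^\alpha$ near the origin produces only an integrable $|x|^{-n-\alpha}$ tail. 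The cases $\alpha=0$ and $\alpha=2$ are trivial and classical, respectively. With the low-frequency contribution thus bounded on $L^p$, hence on each of these boxes, the assembly of the first paragraph completes the proof.

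I expect the phase-linearization estimate of the second paragraph to be the main obstacle: one must verify that subtracting the box-dependent linear phase leaves a remainder \emph{all} of whose derivatives are controlled uniformly in $k$, and it is precisely the threshold $\alpha\le 2$ that prevents the second and higher derivatives of $|\xi|^\alpha$ from growing. The origin singularity for $0<\alpha<2$ is a genuine but localized secondary difficulty, cleanly isolated to finitely many boxes.
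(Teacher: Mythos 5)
Your proposal is correct, but note that this statement is quoted: the paper offers no proof of Theorem B, citing B\'enyi--Gr\"ochenig--Okoudjou--Rogers \cite{Benyi et all}, so the only meaningful comparison is with that reference, and your argument is essentially a reconstruction of it. There the key step is phrased as the single Wiener-amalgam estimate $\sup_{k}\|(\varphi(\cdot-k)\,e^{i|\cdot|^{\alpha}})^{\vee}\|_{L^1}<\infty$, which yields boundedness on every $M^{p,q}$ simultaneously; your uniform bound $\sup_k\|\varphi(D-k)e^{i|D|^\alpha}\|_{L^p\to L^p}\lesssim 1$ is exactly this kernel estimate followed by Young's inequality, and your almost-diagonal assembly over $|j-k|_\infty\le 2$ is the standard reduction (the same localization mechanism appears in the paper's Lemma \ref{MULTIPLIER LEMMA 0} and Remark \ref{equivalence}, run in the opposite direction). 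The two decisive ingredients you isolate --- linearization of the phase on each box with remainder bounds $\partial^{\beta}R_k=O(|k|^{\alpha-2})$ for $|\beta|\ge 1$, uniform precisely because $\alpha\le 2$, the linear factor $e^{iv_k\cdot\xi}$ being absorbed as a translation (an isometry of $L^p$, and invisible to the $\mathcal{F}L^1$ norm); and a separate $\mathcal{F}L^1$ estimate for the compactly supported piece at the origin --- are exactly the mechanisms of \cite{Benyi et all}.

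One step deserves a more careful formulation. For $0<\alpha<1$ the phase $x\cdot\xi+|\xi|^{\alpha}$ does have critical points for every large $|x|$, located at $|\xi|\sim |x|^{-1/(1-\alpha)}$; they are not absent but migrate into the origin as $|x|\to\infty$, so ``non-stationary phase away from the origin'' and ``homogeneity near the origin'' must be spliced with some care. The clean repair is to write $\chi\, e^{i|\cdot|^{\alpha}}=\chi+\chi\,(e^{i|\cdot|^{\alpha}}-1)$ and observe that $|\partial^{\beta}(e^{i|\xi|^{\alpha}}-1)|\lesssim |\xi|^{\alpha-|\beta|}$ near $\xi=0$; a dyadic decomposition in $|\xi|$ then gives, for each shell at scale $2^{-j}$, the bound $2^{-j(n+\alpha)}(2^{-j}|x|)^{-N}$ after $N$ integrations by parts, and summing produces precisely the $|x|^{-n-\alpha}$ tail you assert, hence $(\chi\, e^{i|\cdot|^{\alpha}})^{\vee}\in L^1$. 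This is a standard argument and does not affect the correctness of your outline; with it in place, your proof is complete.
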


\begin{previousthmC}[{Miyachi-Nicola-Rivetti-Tabacco-Tomita
\cite{Miyachi-NicolaRivetti-Tabacco-Tomita}}]
Let $1 \leq p,q \leq \infty,s \in {\mathbf R}$ and $\alpha >2$.
Then $e^{i|D|^\alpha}$ is bounded
from $M^{p,q}_s({\mathbf R}^n)$ to $M^{p,q}({\mathbf R}^n)$
if and only if $s \geq (\alpha-2)n|1/p -1/2|$.
\end{previousthmC}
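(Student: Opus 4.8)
The plan is to reduce the whole mapping question to a single frequency-cube estimate for the localized operators $T_k := \varphi(D-k)\,e^{i|D|^\alpha}$, exploiting two structural features. Since $e^{i|D|^\alpha}$ is a Fourier multiplier it preserves frequency supports, and since $\varphi$ has compact support the ranges of $\varphi(D-k)$ and $\varphi(D-k')$ in frequency are essentially disjoint unless $|k-k'|\lesssim 1$; hence $\varphi(D-k)\,e^{i|D|^\alpha}=\sum_{|k'-k|\le C}\varphi(D-k)\,e^{i|D|^\alpha}\,\varphi(D-k')$ is a finite sum. This decouples the problem over $k$, leaving the weight $\langle k\rangle^{s}$ and the outer $\ell^q$ summation untouched, which is precisely why the critical exponent does not depend on $q$. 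First I would therefore isolate $T_k$ and aim to prove the building-block estimate $\|T_k\|_{L^p\to L^p}\approx\langle k\rangle^{(\alpha-2)n|1/p-1/2|}$.

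The heart of the matter is this estimate. After the substitution $\xi=k+\zeta$ the kernel of $T_k$ equals, up to the modulation $e^{ik\cdot(x-y)}$ (an $L^p$-isometry) and a constant phase $e^{i|k|^\alpha}$, an oscillatory integral $\int\varphi(\zeta)\,e^{i\Phi_k(\zeta)}\,e^{i(x-y)\cdot\zeta}\,d\zeta$, where $\Phi_k(\zeta)$ is $|k+\zeta|^\alpha$ with its constant and linear Taylor terms removed (the linear term only translates $x$, which is harmless). Over the unit cube the Hessian of $\xi\mapsto|\xi|^\alpha$ at $\xi=k$ has eigenvalues comparable to $|k|^{\alpha-2}$ (namely $\alpha|k|^{\alpha-2}$ transversally and $\alpha(\alpha-1)|k|^{\alpha-2}$ radially, both nonzero for $\alpha>1$), so with $\lambda=|k|^{\alpha-2}$ the phase is $\lambda\,\tilde\phi(\zeta)$ with $\tilde\phi$ of size $1$ and nondegenerate. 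A stationary-phase/van der Corput analysis then shows the kernel has amplitude $\sim\lambda^{-n/2}$ on the set of size $\sim\lambda^{n}$ swept out by the critical points $x-y=-\lambda\nabla\tilde\phi(\zeta)$ and is rapidly decreasing off it; thus its $L^1$ norm is $\lesssim\lambda^{n/2}$, giving $\|T_k\|_{L^1\to L^1}=\|T_k\|_{L^\infty\to L^\infty}\lesssim\lambda^{n/2}$. Combined with the trivial bound $\|T_k\|_{L^2\to L^2}\lesssim1$, Riesz--Thorin interpolation yields the upper bound $\|T_k\|_{L^p\to L^p}\lesssim\lambda^{n|1/p-1/2|}=|k|^{(\alpha-2)n|1/p-1/2|}$.

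With the upper bound, sufficiency (the case $s\ge(\alpha-2)n|1/p-1/2|$) follows by summation: bound $\|\varphi(D-k)e^{i|D|^\alpha}f\|_{L^p}$ by $\|T_k\|_{L^p\to L^p}$ times a finite sum of $\|\varphi(D-k')f\|_{L^p}$ over $|k'-k|\le C$, insert the weight $\langle k\rangle^{s}\approx\langle k'\rangle^{s}$, and take the $\ell^q$ norm, the finite-width convolution in $k$ being harmless in $\ell^q$. This gives boundedness for $s=(\alpha-2)n|1/p-1/2|$, and the embedding $M^{p,q}_s\hookrightarrow M^{p,q}_{s_0}$ for $s\ge s_0$ closes that direction. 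For necessity I would test on $f_k$ with $\widehat{f_k}=g(\cdot-k)$ for a fixed bump $g$ supported in the cube around $k$: only one frequency cube is active for $f_k$ and for $e^{i|D|^\alpha}f_k$, so $\|f_k\|_{M^{p,q}_s}\approx\langle k\rangle^{s}\|f_k\|_{L^p}$ and $\|e^{i|D|^\alpha}f_k\|_{M^{p,q}}\approx\|e^{i|D|^\alpha}f_k\|_{L^p}$, and boundedness collapses to $\|e^{i|D|^\alpha}f_k\|_{L^p}\lesssim\langle k\rangle^{s}\|f_k\|_{L^p}$. A matching lower bound $\|T_k\|_{L^p\to L^p}\gtrsim\lambda^{n|1/p-1/2|}$ then forces $s\ge(\alpha-2)n|1/p-1/2|$; for $1\le p\le2$ it comes from feeding in a spatial bump and reading off from the stationary-phase asymptotics that the output has amplitude $\sim\lambda^{-n/2}$ on a set of measure $\sim\lambda^{n}$, so $\|e^{i|D|^\alpha}f_k\|_{L^p}\gtrsim\lambda^{n(1/p-1/2)}$, while $p\ge2$ follows by duality since $T_k^{*}$ is the localized $e^{-i|D|^\alpha}$ and $|1/p'-1/2|=|1/p-1/2|$.

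The main obstacle is the building-block estimate, and specifically securing the sharp $L^1$ kernel bound $\lambda^{n/2}$ together with a matching $L^p$ lower bound. Both rest on a careful, uniform stationary-phase analysis: one must control the higher-order Taylor terms of $|k+\zeta|^\alpha$ (lower order in $\lambda$, but to be shown not to spoil nondegeneracy) and quantify the spreading of the critical-point set precisely enough that the upper and lower bounds carry the same power of $\lambda$. The frequency-cube decoupling and the $\ell^q$ bookkeeping, by contrast, are routine once $\|T_k\|_{L^p\to L^p}$ is pinned down.
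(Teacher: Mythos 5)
Theorem C is imported by citation in this paper and never proved there; the only trace of its proof in the text is Remark 5.9, which records (from Lemma 2.2 of Miyachi--Nicola--Rivetti--Tabacco--Tomita) exactly your key reduction, namely $\| e^{i|D|^\alpha} \|_{M^{p,\tilde q}_s \to M^{p,\tilde q}} \approx \sup_{k}\langle k\rangle^{-s}\|\varphi(D-k)e^{i|D|^\alpha}\|_{L^p\to L^p}$. Your proposal reconstructs the cited reference's actual argument: decouple over frequency cubes using finite overlap (which explains the $q$-independence of the threshold), then pin down $\|\varphi(D-k)e^{i|D|^\alpha}\|_{L^p\to L^p}\approx \langle k\rangle^{(\alpha-2)n|1/p-1/2|}$ by stationary phase on the phase $|k+\zeta|^\alpha$, whose Hessian has eigenvalues $\sim|k|^{\alpha-2}$, interpolating the $L^1$ kernel bound $\lambda^{n/2}$ with the trivial $L^2$ bound and matching it with a lower bound for $1\le p\le 2$ plus duality for $p\ge 2$; this is correct, and you rightly flag the uniform control of higher Taylor terms as the real work. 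The one step stated too loosely is the necessity: testing on the single function $f_k$ with $\widehat{f_k}=g(\cdot-k)$ bounds the action of $T_k$ on one input, not $\|T_k\|_{L^p\to L^p}$; to invoke the operator-norm lower bound (and the duality $\|T_k\|_{L^p\to L^p}=\|T_k^*\|_{L^{p'}\to L^{p'}}$) you should test on $\varphi(D-k)h$ for arbitrary $h\in{\mathcal S}$, exactly as in the paper's Lemma 5.8 --- a trivially fixable point, since for $1\le p\le 2$ your explicit bump already suffices and only the $p\ge 2$ case needs the full operator norm.
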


Theorem A says that the operator $e^{i|D|^\alpha}$ is not bounded on $L^p({\mathbf R}^n)$,
and we have generally a loss of regularity of the order 
up to $\alpha n |1/p -1/2|$.
Theorem B describes an advantage of modulation spaces because
we have no loss in the case $0\leq\alpha\leq2$
or smaller loss in the case $\alpha>2$
if we consider the operator $e^{i|D|^\alpha}$ on these spaces.

Then what is the exact order of the loss
when we consider the operator $e^{i|D|^\alpha}$ 
between $L^p$ spaces and modulation spaces.
We can answer this question by using our main theorem.
The case $0 \leq \alpha \leq 2$ is rather simple, 
and we have the following results:

\begin{theorem}\label{THEOREM 0<a<2}
Let $1\leq p,q\leq\infty$, $s\in {\mathbf R}$ and $0\leq\alpha\leq2$.
Then $e^{i|D|^\alpha}$ is bounded from $M^{p,q}_s({\mathbf R}^n)$
to $L^p({\mathbf R}^n)$
if and only if
$M^{p,q}_s({\mathbf R}^n)\hookrightarrow L^p({\mathbf R}^n)$.
\end{theorem}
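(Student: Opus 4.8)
The plan is to reduce the statement to the single fact that, for $0\le\alpha\le2$, the operator $e^{i|D|^\alpha}$ is a bounded isomorphism of $M^{p,q}_s({\mathbf R}^n)$ onto itself whose inverse $e^{-i|D|^\alpha}$ is likewise bounded on $M^{p,q}_s({\mathbf R}^n)$. Granting this, the theorem is immediate. For the \emph{if} part one factors the operator as $M^{p,q}_s\xrightarrow{e^{i|D|^\alpha}}M^{p,q}_s\hookrightarrow L^p$, so that $\|e^{i|D|^\alpha}f\|_{L^p}\lesssim\|e^{i|D|^\alpha}f\|_{M^{p,q}_s}\lesssim\|f\|_{M^{p,q}_s}$. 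For the \emph{only if} part one writes the identity as $\mathrm{Id}=e^{i|D|^\alpha}\circ e^{-i|D|^\alpha}$ (valid since the two symbols multiply to $1$) and factors it as $M^{p,q}_s\xrightarrow{e^{-i|D|^\alpha}}M^{p,q}_s\xrightarrow{e^{i|D|^\alpha}}L^p$, which yields $\|f\|_{L^p}=\|e^{i|D|^\alpha}(e^{-i|D|^\alpha}f)\|_{L^p}\lesssim\|e^{-i|D|^\alpha}f\|_{M^{p,q}_s}\lesssim\|f\|_{M^{p,q}_s}$, that is, $M^{p,q}_s\hookrightarrow L^p$.

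To establish the reduction, first note that $e^{i|D|^\alpha}$ and the Bessel-type multiplier $\langle D\rangle^s$ (defined by $\langle D\rangle^s g=(\langle\cdot\rangle^s\widehat g)^\vee$) are both Fourier multipliers and hence commute. Combining this with the standard norm equivalence $\|g\|_{M^{p,q}_s}\approx\|\langle D\rangle^s g\|_{M^{p,q}}$ (which follows from $\langle\xi\rangle^s\approx\langle k\rangle^s$ on $\mathrm{supp}\,\varphi(\cdot-k)$) and Theorem B, we obtain
\[
\|e^{i|D|^\alpha}f\|_{M^{p,q}_s}\approx\|e^{i|D|^\alpha}\langle D\rangle^s f\|_{M^{p,q}}\lesssim\|\langle D\rangle^s f\|_{M^{p,q}}\approx\|f\|_{M^{p,q}_s},
\]
so $e^{i|D|^\alpha}$ is bounded on $M^{p,q}_s$. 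The boundedness of the inverse $e^{-i|D|^\alpha}$ is not covered directly by Theorem B, but it follows by symmetry: writing $Cf=\overline f$ for complex conjugation, $C$ maps both $M^{p,q}_s$ and $L^p$ boundedly to themselves (isometrically on $L^p$, and on $M^{p,q}_s$ by a direct computation with a real even window $\varphi$), while $C\,e^{i|D|^\alpha}\,C=e^{-i|D|^\alpha}$ because the symbol $e^{i|\xi|^\alpha}$ is even. Hence $e^{-i|D|^\alpha}$ inherits the boundedness of $e^{i|D|^\alpha}$ on $M^{p,q}_s$.

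The remaining point is the rigorous passage between the a priori estimates, which are assumed only on ${\mathcal S}({\mathbf R}^n)$, and the composed operators applied to functions such as $e^{-i|D|^\alpha}f$ that need not be Schwartz. When $1\le p,q<\infty$ this is harmless, since ${\mathcal S}$ is dense in $M^{p,q}_s$ and all the operators extend continuously; the endpoint cases $p=\infty$ or $q=\infty$ are handled by passing to the closure ${\mathcal M}^{p,q}_s$ and dualizing, exactly as in the \emph{only if} argument of Section 4. I expect this density/duality bookkeeping, together with the verification of the conjugation symmetry for $e^{-i|D|^\alpha}$, to be the only genuinely delicate part; the core of the argument is the purely formal cancellation $e^{i|D|^\alpha}e^{-i|D|^\alpha}=\mathrm{Id}$ combined with the invertibility of $e^{i|D|^\alpha}$ on $M^{p,q}_s$ furnished by Theorem B.
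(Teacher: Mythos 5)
Your proof is correct and follows essentially the same route as the paper's: the \emph{if} direction factors through the boundedness of $e^{i|D|^\alpha}$ on $M^{p,q}_s$ furnished by Theorem B (with the lifting property), and the \emph{only if} direction substitutes $f=e^{-i|D|^\alpha}g$ into the hypothesis and uses the conjugation identity $\overline{e^{i|D|^\alpha}f}=e^{-i|D|^\alpha}\overline{f}$ to transfer Theorem B to the inverse multiplier, exactly as in the paper. Your extra bookkeeping about extending the a priori estimate from ${\mathcal S}({\mathbf R}^n)$ (density when $p,q<\infty$, passage to ${\mathcal M}^{p,q}_s$ and duality at the endpoints) addresses a point the paper passes over silently, but it does not change the substance of the argument.
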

\begin{proof}
Assume that $e^{i|D|^\alpha}$ is bounded from $M^{p,q}_s({\mathbf R}^n)$ to $L^p({\mathbf R}^n)$:
\[
\|{e^{i|D|^\alpha}f}\|_{L^p}\lesssim \|{f}\|_{M^{p,q}_s}.
\]
Note that $e^{-i|D|^\alpha}$ is
also bounded from $M^{p,q}_s({\mathbf R}^n)$ to $L^p({\mathbf R}^n)$.
Then by taking $f=e^{-i|D|^\alpha}g$ we have
\[
\|{g}\|_{L^p}\lesssim \|{e^{-i|D|^\alpha}g}\|_{M^{p,q}_s}
\lesssim \|{g}\|_{M^{p,q}_s},
\]
which means that $M^{p,q}_s ({\mathbf R}^n) \hookrightarrow L^p({\mathbf R}^n)$
by the equation $\overline{e^{i|D|^\alpha}f}=e^{-i|D|^\alpha}\overline{f}$.
Conversely assume that $M^{p,q}_s ({\mathbf R}^n)  \hookrightarrow L^p({\mathbf R}^n)$.
Since $e^{i|D|^\alpha}$ is bounded on $M^{p,q}_s({\mathbf R}^n)$
by Theorem B,
we have
\[
\|{e^{i|D|^\alpha}f}\|_{L^p}
\lesssim 
\|{e^{i|D|^\alpha}f}\|_{M^{p,q}_s}
\lesssim 
\|{f}\|_{M^{p,q}_s},
\]
which means that $e^{i|D|^\alpha}$ is bounded from $M^{p,q}_s({\mathbf R}^n)$ to $L^p({\mathbf R}^n)$.
\end{proof}

The following corollary is straightforwardly obtained
from Theorem \ref{THEOREM 0<a<2} and Theorem \ref{MAIN THEOREM}.
The second part is just the dual statement of the first part:

\begin{corollary}\label{cor5.2}
Let $1 \leq p,q \leq \infty,s \in {\mathbf R}$ and $0 \leq \alpha \leq  2$.
Then $e^{i|D|^\alpha}$
is bounded from $M^{p,q}_s({\mathbf R}^n)$
to $L^p({\mathbf R}^n)$
if and only if one of the following conditions is satisfied$:$
\begin{enumerate}
\item[$(1)$]  $q \leq p< \infty$ and $s \geq -n \nu_2(p,q);$

\item[$(2)$]  $p<q$ and $s> -n \nu_2(p,q);$
 
\item[$(3)$]  $p=\infty, q=1$, and $s \geq -n \nu_2(\infty,1);$

\item[$(4)$]  $p=\infty, q \not=1$, and $s > -n \nu_2(\infty,q)$,
\end{enumerate}
and from $L^p_s({\mathbf R}^n)$
to $M^{p,q}({\mathbf R}^n)$
if and only if one of the following conditions is satisfied$:$
\begin{enumerate}
\item[$(5)$] $q \geq  p> 1$ and $s \geq n \nu_1(p,q);$

\item[$(6)$] $p>q$ and $s> n \nu_1(p,q);$

\item[$(7)$] $p=1, q=\infty$, and $s \geq n \nu_1(1,\infty);$

\item[$(8)$] $p=1, q \not=\infty$ and $s> n \nu_1(1,q)$.

\end{enumerate}
\end{corollary}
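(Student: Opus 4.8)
The plan is to establish the first equivalence (boundedness of $e^{i|D|^\alpha}$ from $M^{p,q}_s$ to $L^p$) directly from Theorem \ref{THEOREM 0<a<2} and Theorem \ref{MAIN THEOREM}, and then to deduce the second equivalence (from $L^p_s$ to $M^{p,q}$) purely by duality, so that essentially no new analysis is required.

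For the first equivalence I would begin by invoking Theorem \ref{THEOREM 0<a<2}, which already reduces boundedness of $e^{i|D|^\alpha}:M^{p,q}_s\to L^p$ to the plain inclusion $M^{p,q}_s\hookrightarrow L^p$. It then remains to recognize this inclusion as an instance of Theorem \ref{MAIN THEOREM}. To that end I would use the Bessel potential $\langle D\rangle^{s}$, which is an isomorphism of $M^{p,q}_s$ onto $M^{p,q}$ and satisfies $\|\langle D\rangle^{-s}g\|_{L^p}=\|g\|_{L^p_{-s}}$. Substituting $f=\langle D\rangle^{-s}g$ shows that $M^{p,q}_s\hookrightarrow L^p$ holds if and only if $M^{p,q}\hookrightarrow L^p_{-s}$. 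Applying Theorem \ref{MAIN THEOREM} with $s$ replaced by $-s$, and rewriting each inequality $-s\le n\nu_2$ as $s\ge -n\nu_2$ (and $-s<n\nu_2$ as $s>-n\nu_2$), then produces exactly conditions (1)--(4).

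For the second equivalence I would argue by duality. The ($L^2$-)adjoint of the Fourier multiplier $e^{i|D|^\alpha}$ is $e^{-i|D|^\alpha}$, so together with the identifications $(\mathcal M^{p,q})^{*}=\mathcal M^{p',q'}$ and $(L^p_s)^{*}=L^{p'}_{-s}$ one gets that $e^{i|D|^\alpha}:L^p_s\to M^{p,q}$ is bounded if and only if $e^{-i|D|^\alpha}:M^{p',q'}\to L^{p'}_{-s}$ is bounded. A second application of the Bessel-potential substitution moves the weight onto the domain, turning this into boundedness of $e^{-i|D|^\alpha}:M^{p',q'}_{s}\to L^{p'}$, to which the first equivalence applies (the proof of Theorem \ref{THEOREM 0<a<2} works verbatim for $e^{-i|D|^\alpha}$, via $\overline{e^{-i|D|^\alpha}f}=e^{i|D|^\alpha}\overline{f}$). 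Hence the second operator is bounded precisely when conditions (1)--(4) hold with $(p,q)$ replaced by $(p',q')$. Finally I would translate these back using the relation $\nu_2(p',q')=-\nu_1(p,q)$ recorded in the introduction, under which $q'\le p'<\infty\Leftrightarrow q\ge p>1$, $\ p'<q'\Leftrightarrow p>q$, $\ (p',q')=(\infty,1)\Leftrightarrow(p,q)=(1,\infty)$, and $-n\nu_2(p',q')=n\nu_1(p,q)$; this converts conditions (1)--(4) at $(p',q')$ into conditions (5)--(8).

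The routine facts (that $\langle D\rangle^{s}$ is an isomorphism on the modulation scale and commutes with $e^{\pm i|D|^\alpha}$, and that $e^{-i|D|^\alpha}$ is bounded on $M^{p,q}$ by Theorem B) are standard and I would state them without detailed computation. The only point requiring genuine care is the duality step at the endpoints $p,q\in\{1,\infty\}$, where $\mathcal S$ need not be dense and $M^{p,q}$ must be replaced by its closure $\mathcal M^{p,q}$. I would handle these exactly as in the ``only if'' part of the proof of Theorem \ref{MAIN THEOREM}, passing through $L^1_{-s}\hookrightarrow((L^1_{-s})^{*})^{*}=(L^{\infty}_{s})^{*}$ to relate $e^{i|D|^\alpha}$ on $L^{\infty}_s$ to its predual formulation. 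This endpoint bookkeeping is the main obstacle, but it is entirely analogous to arguments already carried out in the paper.
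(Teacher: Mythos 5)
Your proposal is correct and takes essentially the same route as the paper, which obtains the corollary ``straightforwardly'' from Theorem \ref{THEOREM 0<a<2} and Theorem \ref{MAIN THEOREM} and disposes of the second equivalence as ``just the dual statement of the first part.'' Your lifting-property reduction ($M^{p,q}_s\hookrightarrow L^p \Leftrightarrow M^{p,q}\hookrightarrow L^p_{-s}$), the index bookkeeping via $\nu_2(p',q')=-\nu_1(p,q)$, and the endpoint care with $\mathcal{M}^{p,q}$ merely make explicit the steps the paper leaves implicit.
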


For $\alpha > 2$, 
we have the following results:

\begin{theorem}\label{THEOREM 2<a<infty}
Let $1\leq p,q\leq\infty$, $s\in{\mathbf R}$ and $\alpha >2$.
Then $e^{i|D|^\alpha}$ is bounded from $M^{p,q}_{s+(\alpha-2)n|1/p-1/2|}({\mathbf R}^n)$
to $L^p({\mathbf R}^n)$ if $M^{p,q}_s({\mathbf R}^n)\hookrightarrow L^p({\mathbf R}^n)$.
\end{theorem}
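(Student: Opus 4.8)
The plan is to assemble three facts: the hypothesis $M^{p,q}_s(\mathbf R^n)\hookrightarrow L^p(\mathbf R^n)$, the sharp boundedness $e^{i|D|^\alpha}\colon M^{p,q}_{\sigma}\to M^{p,q}$ from Theorem C (where I abbreviate $\sigma:=(\alpha-2)n|1/p-1/2|$), and the elementary observation that $e^{i|D|^\alpha}$ is a Fourier multiplier and hence commutes with the Bessel potential $\langle D\rangle^t$. All estimates are a priori estimates for $f\in{\mathcal S}({\mathbf R}^n)$, which is exactly what the definition of boundedness in Section \ref{Preliminaries} requires, so no density argument is needed.

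The technical ingredient I would first record is the \emph{lift property} of modulation spaces: for every $t,\tau\in{\mathbf R}$ the operator $\langle D\rangle^t$ maps $M^{p,q}_{\tau}$ isomorphically onto $M^{p,q}_{\tau-t}$, that is, $\|g\|_{M^{p,q}_{\tau}}\approx \|\langle D\rangle^t g\|_{M^{p,q}_{\tau-t}}$. This follows from the frequency-uniform structure of the modulation norm: on $\operatorname{supp}\varphi(\cdot-k)$ one has $\langle\xi\rangle^t\approx\langle k\rangle^t$, while the remaining factor $\langle\xi\rangle^t\langle k\rangle^{-t}\varphi(\xi-k)$ is a smooth bump multiplier acting on $L^p$ with bound uniform in $k$; summing in the $\ell^q(\langle k\rangle^{\tau q})$ sense then gives the stated equivalence. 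In particular $\|g\|_{M^{p,q}_s}\approx\|\langle D\rangle^s g\|_{M^{p,q}}$.

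With these in hand, the proof is the chain
\begin{align*}
\|e^{i|D|^\alpha}f\|_{L^p}
&\lesssim \|e^{i|D|^\alpha}f\|_{M^{p,q}_s}
\approx \|\langle D\rangle^s e^{i|D|^\alpha}f\|_{M^{p,q}} \\
&= \|e^{i|D|^\alpha}\langle D\rangle^s f\|_{M^{p,q}}
\lesssim \|\langle D\rangle^s f\|_{M^{p,q}_{\sigma}}
\approx \|f\|_{M^{p,q}_{s+\sigma}},
\end{align*}
valid for all $f\in{\mathcal S}({\mathbf R}^n)$. Here the first inequality is the hypothesis applied to $e^{i|D|^\alpha}f$, the first $\approx$ and the last $\approx$ are the lift property, the equality is the commutation of the two Fourier multipliers $e^{i|D|^\alpha}$ and $\langle D\rangle^s$ (obvious on the Fourier side), and the remaining inequality is precisely Theorem C. Since $s+\sigma=s+(\alpha-2)n|1/p-1/2|$, this is the asserted boundedness.

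I expect the only delicate point to be the rigorous justification of the lift property for the full range $1\le p,q\le\infty$, including the endpoint cases, and the implicit verification that $e^{i|D|^\alpha}f$ indeed lies in the spaces on which the preceding embeddings are invoked; once the uniform-in-$k$ multiplier estimate behind $\|g\|_{M^{p,q}_s}\approx\|\langle D\rangle^s g\|_{M^{p,q}}$ is established, the rest of the argument is a formal composition and presents no further obstacle.
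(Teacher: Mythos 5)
Your proof is correct and follows essentially the same route as the paper: the paper's two-line argument chains the hypothesis $\|e^{i|D|^\alpha}f\|_{L^p}\lesssim\|e^{i|D|^\alpha}f\|_{M^{p,q}_s}$ with Theorem C applied in its weight-shifted form $M^{p,q}_{s+(\alpha-2)n|1/p-1/2|}\to M^{p,q}_s$, which is exactly your chain. The only difference is that the paper leaves the lifting property (which converts Theorem C's unweighted statement into the shifted one, using that $\langle D\rangle^s$ commutes with the multiplier) implicit, whereas you state and justify it explicitly---a standard fact, and your sketch of the uniform-in-$k$ bump-multiplier estimate is the right justification for the full range $1\le p,q\le\infty$.
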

\begin{proof}
Assume that $M^{p,q}_s ({\mathbf R}^n) \hookrightarrow L^p({\mathbf R}^n)$.
Since $e^{i|D|^\alpha}$ is bounded from $M^{p,q}_{s+(\alpha-2)n|1/p-1/2|}({\mathbf R}^n)$
to $M^{p,q}_s({\mathbf R}^n)$
by Theorem C,
we have
\[
\|{e^{i|D|^\alpha}f}\|_{L^p}
\lesssim 
\|{e^{i|D|^\alpha}f}\|_{M^{p,q}_s}
\lesssim 
\|{f}\|_{M^{p,q}_{s+(\alpha-2)n|1/p-1/2|}},
\]
which means that $e^{i|D|^\alpha}$ is bounded from
$M^{p,q}_{s+(\alpha-2)n|1/p-1/2|}({\mathbf R}^n)$ to $L^p({\mathbf R}^n)$.
\end{proof}

The following corollary is obtained
from Theorem \ref{THEOREM 2<a<infty}, Theorem \ref{MAIN THEOREM} 
and the duality argument again:

\begin{corollary}\label{MAIN THEOREM 4}
Let $1 \leq p,q \leq \infty,s \in {\mathbf R}$ and  $\alpha >2$.
Then $e^{i|D|^\alpha}$
is bounded from $M^{p,q}_s({\mathbf R}^n)$
to $L^p({\mathbf R}^n)$
if one of the following conditions is satisfied$:$
\begin{enumerate}
\item[$(1)$]  $q \leq p< \infty$ and $s \geq -n \nu_2(p,q)+(\alpha-2)n|1/p -1/2|;$

\item[$(2)$]  $p<q$ and $s> -n \nu_2(p,q)+(\alpha-2)n|1/p -1/2|;$
 
\item[$(3)$]  $p=\infty, q=1$, and $s \geq -n \nu_2(\infty,1)+(\alpha-2)n|1/p -1/2|;$

\item[$(4)$]  $p=\infty, q \not=1$, and $s> -n \nu_2(\infty,q)+(\alpha-2)n|1/p -1/2|$,

\end{enumerate}
and from $L^p_s({\mathbf R}^n)$
to $M^{p,q}({\mathbf R}^n)$
if one of the following conditions is satisfied$:$
\begin{enumerate}
\item[$(5)$] $q \geq  p> 1$ and $s \geq n \nu_1(p,q)+(\alpha-2)n|1/p -1/2|$;

\item[$(6)$] $p>q$ and $s> n \nu_1(p,q)+(\alpha-2)n|1/p -1/2|;$

\item[$(7)$] $p=1, q=\infty$, and $s \geq n \nu_1(1,\infty)+(\alpha-2)n|1/p -1/2|;$

\item[$(8)$] $p=1, q \not=\infty$ and $s> n \nu_1(1,q)+(\alpha-2)n|1/p -1/2|$.

\end{enumerate}
\end{corollary}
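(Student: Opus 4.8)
The plan is to deduce both halves of Corollary \ref{MAIN THEOREM 4} from Theorem \ref{THEOREM 2<a<infty}, Theorem \ref{MAIN THEOREM}, and a weight-shifting duality, so that essentially all that remains is index bookkeeping. Throughout I write $\langle D\rangle^\sigma f=(\langle\cdot\rangle^\sigma\widehat f)^\vee$, which commutes with every Fourier multiplier.

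First I would handle the claims (1)--(4), concerning boundedness from $M^{p,q}_s(\mathbf{R}^n)$ to $L^p(\mathbf{R}^n)$. By Theorem \ref{THEOREM 2<a<infty}, the operator $e^{i|D|^\alpha}$ is bounded from $M^{p,q}_s(\mathbf{R}^n)$ to $L^p(\mathbf{R}^n)$ whenever $M^{p,q}_\sigma(\mathbf{R}^n)\hookrightarrow L^p(\mathbf{R}^n)$ with $\sigma=s-(\alpha-2)n|1/p-1/2|$. Hence it suffices to record when the plain embedding $M^{p,q}_\sigma\hookrightarrow L^p$ holds. Setting $g=\langle D\rangle^{\sigma}f$ and using $\|f\|_{L^p}=\|g\|_{L^p_{-\sigma}}$ together with $\|f\|_{M^{p,q}_\sigma}\approx\|g\|_{M^{p,q}}$, one sees that $M^{p,q}_\sigma\hookrightarrow L^p$ is equivalent to $M^{p,q}\hookrightarrow L^p_{-\sigma}$, which is exactly Theorem \ref{MAIN THEOREM} with its parameter $s$ replaced by $-\sigma$. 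Reading off the four cases of that theorem and flipping signs gives that $M^{p,q}_\sigma\hookrightarrow L^p$ holds iff $q\le p<\infty$ and $\sigma\ge-n\nu_2(p,q)$, or $p<q$ and $\sigma>-n\nu_2(p,q)$, or $p=\infty,q=1$ and $\sigma\ge-n\nu_2(\infty,1)$, or $p=\infty,q\ne1$ and $\sigma>-n\nu_2(\infty,q)$. Substituting $\sigma=s-(\alpha-2)n|1/p-1/2|$ yields precisely conditions (1)--(4).

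Next I would obtain (5)--(8) from (1)--(4) by duality. Since $e^{i|D|^\alpha}$ is bounded from $L^p_s(\mathbf{R}^n)$ to $M^{p,q}(\mathbf{R}^n)$ iff its Banach adjoint maps $(M^{p,q})^*$ into $(L^p_s)^*$, and since the adjoint of a Fourier multiplier is again a Fourier multiplier, this is equivalent to boundedness of $e^{-i|D|^\alpha}$ from $M^{p',q'}(\mathbf{R}^n)$ to $L^{p'}_{-s}(\mathbf{R}^n)$ (using $(M^{p,q})^*=M^{p',q'}$ and $(L^p_s)^*=L^{p'}_{-s}$). Commuting $\langle D\rangle^{-s}$ converts this into boundedness of $e^{-i|D|^\alpha}$ from $M^{p',q'}_{s}(\mathbf{R}^n)$ to $L^{p'}(\mathbf{R}^n)$, and the identity $\overline{e^{i|D|^\alpha}f}=e^{-i|D|^\alpha}\overline{f}$ together with the conjugation-invariance of both norms shows this is in turn equivalent to boundedness of $e^{i|D|^\alpha}$ from $M^{p',q'}_s(\mathbf{R}^n)$ to $L^{p'}(\mathbf{R}^n)$. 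The latter is governed by the part (1)--(4) already proved, now applied with $(p,q)$ replaced by $(p',q')$; using $|1/p'-1/2|=|1/p-1/2|$ and the relation $-n\nu_2(p',q')=n\nu_1(p,q)$ that follows from the remark $\nu_2(p,q)=-\nu_1(p',q')$, the four cases transform term by term into exactly (5)--(8).

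The main obstacle is not the computation but making the duality steps rigorous at the endpoints $p=\infty$ or $q=\infty$, where $M^{p,q}$ fails to be the dual of a well-behaved predual because ${\mathcal S}(\mathbf{R}^n)$ is not dense. There I would argue through the completions ${\mathcal M}^{p,q}$ and their biduals, using $({\mathcal M}^{p,q})^*={\mathcal M}^{p',q'}$ and the inclusion ${\mathcal M}^{p,q}\hookrightarrow M^{p,q}$, exactly as in the ``only if'' part of the proof of Theorem \ref{MAIN THEOREM}; one must also check that the commutation of $\langle D\rangle^{-s}$ with the multiplier and the conjugation identity are valid on ${\mathcal S}(\mathbf{R}^n)$ and extend by density. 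Once these endpoint technicalities are in place, the remainder is the routine index bookkeeping carried out above.
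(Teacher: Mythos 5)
Your proposal is correct and follows essentially the paper's own route: the paper derives Corollary \ref{MAIN THEOREM 4} exactly from Theorem \ref{THEOREM 2<a<infty} combined with Theorem \ref{MAIN THEOREM} (your lifting step $M^{p,q}_\sigma\hookrightarrow L^p \Leftrightarrow M^{p,q}\hookrightarrow L^p_{-\sigma}$ being the implicit bookkeeping) and then obtains (5)--(8) by the same duality argument, using $\nu_2(p',q')=-\nu_1(p,q)$ and the conjugation identity $\overline{e^{i|D|^\alpha}f}=e^{-i|D|^\alpha}\overline{f}$ that the paper itself employs. Your extra care with the completions ${\mathcal M}^{p,q}$ at the endpoints $p=\infty$ or $q=\infty$ matches how the paper handles duality in the necessity part of Theorem \ref{MAIN THEOREM}, so nothing is missing.
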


Although the converse of Theorem \ref{THEOREM 2<a<infty} is not true,
we believe that the converse of Corollary \ref{MAIN THEOREM 4}  is still true.
In fact we have at least the following result:

\begin{theorem}\label{MAIN THEOREM 5}
Let $1 \leq p,q \leq \infty,s \in {\mathbf R}$ and  $\alpha >2$.
Suppose that $e^{i|D|^\alpha}$
is bounded from $M^{p,q}_s({\mathbf R}^n)$
to $L^p({\mathbf R}^n)$.
Then we have $s \geq -n \nu_2(p,q)+(\alpha-2)n|1/p -1/2|$.
Suppose that $e^{i|D|^\alpha}$
is bounded from $L^p_s({\mathbf R}^n)$
to $M^{p,q}({\mathbf R}^n)$ instead.
Then we have $s \geq n \nu_1(p,q)+(\alpha-2)n|1/p -1/2|$.

\end{theorem}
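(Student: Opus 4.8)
The plan is to prove the first assertion and obtain the second by duality. Indeed, the adjoint of a bounded operator $e^{i|D|^\alpha}\colon M^{p,q}_s(\mathbf{R}^n)\to L^p(\mathbf{R}^n)$ is $e^{-i|D|^\alpha}\colon L^{p'}(\mathbf{R}^n)\to M^{p',q'}_{-s}(\mathbf{R}^n)$, and since $\overline{e^{i|D|^\alpha}f}=e^{-i|D|^\alpha}\overline{f}$, the relations $\nu_1(p,q)=-\nu_2(p',q')$ and $|1/p-1/2|=|1/p'-1/2|$ convert the bound for the first operator into the bound for the second. Thus I would only need to show that boundedness of $e^{i|D|^\alpha}\colon M^{p,q}_s\to L^p$ forces $s\ge -n\nu_2(p,q)+(\alpha-2)n|1/p-1/2|$.

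Before constructing test functions I would record two orientation facts. First, the sharp endpoint inclusion $M^{p,q}(\mathbf{R}^n)\hookrightarrow L^p_{n\nu_2(p,q)}(\mathbf{R}^n)$ of Theorem \ref{MAIN THEOREM} (where it is available), combined with Miyachi's Theorem A, makes the composition $L^p_s\xrightarrow{e^{i|D|^\alpha}}M^{p,q}\hookrightarrow L^p_{n\nu_2}$ already yield the conclusion on the diagonal slice $q=p$; this is a useful consistency check, but the case $p=2$, $q<2$ shows it is strictly too weak off the diagonal, so a direct argument is unavoidable. Second, the dilation identity $e^{i|D|^\alpha}U_\lambda=U_\lambda e^{i\lambda^\alpha|D|^\alpha}$ shows that applying $e^{i|D|^\alpha}$ to a profile at frequency $\approx\lambda$ amounts to running a fractional Schr\"odinger propagator for the long time $T\approx\lambda^{\alpha-2}$ on a unit-frequency profile, which is the mechanism that should produce the factor $(\alpha-2)n|1/p-1/2|$.

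The core would be a test-function construction on a single frequency shell. For large $\lambda$ I would build $f_\lambda$ out of the modulation blocks $\varphi(D-k)$ with $|k|\approx\lambda$, assigning the coefficients exactly as in the necessity Lemmas \ref{Pre N-LEMMA 2} and \ref{N-LEMMA 1}, so that the $\ell^q$–$\ell^p$ block count over the shell saturates the inclusion threshold and contributes the term $-n\nu_2(p,q)$ to the eventual exponent. On such a shell one writes $|\xi|^\alpha=c\lambda^\alpha+(\text{linear in }\xi)+Q(\xi)$ with a Hessian of size $\approx\lambda^{\alpha-2}$; the linear part is a harmless translation, while a stationary-phase analysis of the quadratic part shows that $e^{i|D|^\alpha}$ spreads each unit block to spatial scale $\approx\lambda^{\alpha-2}$ and alters its $L^p$ size by the dispersive factor $\lambda^{-(\alpha-2)n(1/2-1/p)}=\lambda^{-(\alpha-2)n|1/p-1/2|}$ in the regime $p\ge2$ (and the mirror factor for $p<2$). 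Bounding $\|e^{i|D|^\alpha}f_\lambda\|_{L^p}$ from below and $\|f_\lambda\|_{M^{p,q}_s}$ from above (the latter carrying the weight $\lambda^{s}$), and then letting $\lambda\to\infty$, would force the exponent inequality that rearranges precisely to $s\ge -n\nu_2(p,q)+(\alpha-2)n|1/p-1/2|$.

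I expect the main obstacle to lie in the third step: producing the lower bound on $\|e^{i|D|^\alpha}f_\lambda\|_{L^p}$ uniformly and with the \emph{exact} dispersive exponent $(\alpha-2)n|1/p-1/2|$, and arranging the stationary-phase spreading so that the dispersion contribution adds cleanly to the inclusion contribution $-n\nu_2(p,q)$ rather than interfering with it. This is delicate because the optimal coefficient sequence, the amount of spatial spreading, and the resulting $\ell^q(L^p)$ bookkeeping all change across the three regions $I_1,I_2,I_3$ defining $\nu_2(p,q)$, so each region would have to be handled separately; moreover the endpoints $p=1$ and $p=\infty$ would require the local Hardy space input of Lemma \ref{LOCAY HARDY h1} together with the density and duality reductions already exploited in Section 4.
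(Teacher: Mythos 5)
Your duality reduction of the second assertion to the first matches the paper, but the core of your plan for the first assertion has a sign problem that makes it fail on half the parameter range, and it does not reach the region $q>\max(p,p')$ at all. Concretely: a wave packet occupying a unit frequency block at $|\xi|\approx\lambda$ is spread by $e^{i|D|^\alpha}$ to spatial scale $\lambda^{\alpha-2}$ with amplitude $\lambda^{-(\alpha-2)n/2}$, so its $L^p$ norm changes by the factor $\lambda^{(\alpha-2)n(1/p-1/2)}$. For $p\le 2$ this is a \emph{gain} of $\lambda^{(\alpha-2)n|1/p-1/2|}$ and your mechanism works; but for $p\ge 2$ it is a \emph{loss}, so testing with forward-propagated packets (single or in a shell, with any choice of coefficients) yields only $s\ge -n\nu_2(p,q)-(\alpha-2)n|1/p-1/2|$, strictly weaker than the claimed bound. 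Your parenthetical ``in the regime $p\ge2$ (and the mirror factor for $p<2$)'' has this backwards. To produce the $+(\alpha-2)n|1/p-1/2|$ term for $p\ge2$ one must make the stationary-phase lower bound act on the exponent-$\le 2$ side: the paper does this by dualizing the hypothesis to $\|e^{-i|D|^\alpha}f\|_{M^{p',q'}_{-s}}\lesssim\|f\|_{L^{p'}}$ and testing \emph{that} estimate with a dilated annulus bump $U_\lambda g$ (Lemma \ref{Lps-Mpq estimate}), where the lower bound is now a modulation-norm bound obtained from the $k=0$ block together with the dilation estimate of Lemma \ref{DILATION}. Separately, in the region $q\le\min(p,p')$ (where $\nu_2=0$) no dilation test of this kind gives the right sign for $p>2$; there the paper transfers the hypothesis to boundedness on modulation spaces via Lemma \ref{MULTIPLIER LEMMA 0} and Remark \ref{equivalence} and invokes the sharp necessity in Theorem C, a step your sketch has no substitute for.

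Two further points. First, the region $q>\max(p,p')$ (case (iv) in the paper's proof) is genuinely out of reach of any single-scale test-function argument, including yours: the paper handles it by a contradiction via complex interpolation, assuming the bound held with an $\varepsilon$-better exponent at $P=(1/p,1/q)$ and interpolating with the known sufficient estimate at an interior point $Q$ from Corollary \ref{MAIN THEOREM 4} to contradict the sharp necessity already established on the open segment; nothing in your proposal plays this role. Second, the obstacle you flag as the hardest step --- a sharp $L^p$ lower bound for a superposition of $\approx\lambda^n$ overlapping dispersed packets on a shell, with $\ell^q(L^p)$ bookkeeping varying over the regions $I_1,I_2,I_3$ --- is real but also unnecessary: the paper never confronts multi-packet interference, because a single dilated bump $f=U_\lambda g$ with $\widehat g$ supported in an annulus already encodes the sharp $-n\nu_2(p,q)$ contribution through the dilation exponents $\mu_1,\mu_2$ of Lemma \ref{DILATION} (note also that where your construction does work, $p\le 2\le q\le p'$, the log-refined coefficient sequences of Lemmas \ref{N-LEMMA 1} and \ref{Pre N-LEMMA 2} are overkill, since Theorem \ref{MAIN THEOREM 5} claims only non-strict inequalities). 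So your plan is simultaneously harder than the paper's where it works and incomplete where it does not; the missing ingredients are the duality-plus-dilation step for $p\ge2$, the transfer to Theorem C for $q\le\min(p,p')$, and the interpolation argument for $q>\max(p,p')$.
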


To prove Theorem \ref{MAIN THEOREM 5}, we use the following lemma.

\begin{lemma}\label{MULTIPLIER LEMMA 0}
Let $1 \leq  p,q \leq \infty,s \in {\mathbf R}$ and
$\alpha \geq 0$.
Then
\begin{equation}
 \sup_{k \in {\mathbf Z}^n}
\langle k \rangle^{-s}
\| \varphi(D-k) e^{i|D|^\alpha} \|_{L^p \to L^p}
\lesssim
\| e^{i|D|^\alpha} \|_{M^{p,q}_s \to L^p},
\end{equation}
where $\varphi$ is a function satisfying \eqref{window function}.
\end{lemma}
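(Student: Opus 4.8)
The plan is to test the operator against functions whose frequencies are concentrated in a single band, so that the full modulation norm collapses to essentially one summand and the $M^{p,q}_s \to L^p$ bound turns into a family of single-band $L^p \to L^p$ bounds. Fix $k \in {\mathbf Z}^n$ and choose an auxiliary $\widetilde\varphi \in {\mathcal S}({\mathbf R}^n)$ with $\widetilde\varphi \equiv 1$ on $\mathrm{supp}\,\varphi$ and $\mathrm{supp}\,\widetilde\varphi \subset [-2,2]^n$. Since all operators in play are Fourier multipliers and $\widetilde\varphi(\xi-k)=1$ on the support of $\varphi(\cdot-k)$, one has the symbol identity $\varphi(D-k)\,e^{i|D|^\alpha} = \varphi(D-k)\,e^{i|D|^\alpha}\,\widetilde\varphi(D-k)$. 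Thus for any $g \in {\mathcal S}({\mathbf R}^n)$, writing $f = \widetilde\varphi(D-k)g \in {\mathcal S}({\mathbf R}^n)$, I would rewrite $\|\varphi(D-k)e^{i|D|^\alpha}g\|_{L^p} = \|\varphi(D-k)e^{i|D|^\alpha}f\|_{L^p}$ and then feed the frequency-localized $f$ into the hypothesis.

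The crucial step is to bound $\|f\|_{M^{p,q}_s}$ by $\langle k\rangle^s \|g\|_{L^p}$ \emph{uniformly} in $k$. Because $\widehat f = \widetilde\varphi(\cdot-k)\widehat g$ is supported in $k+[-2,2]^n$, the band projection $\varphi(D-j)f$ vanishes unless $(j+[-1,1]^n)\cap(k+[-2,2]^n)\neq\emptyset$, i.e. unless $|j-k|\lesssim 1$; hence only $O(1)$ terms (a number depending only on $n$) contribute to the defining sum, and for each such $j$ one has $\langle j\rangle \approx \langle k\rangle$. Young's inequality gives $\|\varphi(D-j)h\|_{L^p} \le \|\varphi^\vee\|_{L^1}\|h\|_{L^p}$ uniformly in $j$ (since $(\varphi(\cdot-j))^\vee = e^{ij\cdot x}\varphi^\vee$), and likewise $\|f\|_{L^p} = \|\widetilde\varphi(D-k)g\|_{L^p} \le \|\widetilde\varphi^\vee\|_{L^1}\|g\|_{L^p}$. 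Combining these (with the obvious modifications when $p$ or $q$ is infinite),
\[
\|f\|_{M^{p,q}_s} = \Big( \sum_{|j-k|\lesssim 1} \langle j\rangle^{sq}\|\varphi(D-j)f\|_{L^p}^q \Big)^{1/q} \lesssim \langle k\rangle^s \|f\|_{L^p} \lesssim \langle k\rangle^s \|g\|_{L^p}.
\]

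Finally, assembling the pieces and using both the uniform $L^p$-boundedness of $\varphi(D-k)$ and the hypothesis,
\[
\|\varphi(D-k)e^{i|D|^\alpha}g\|_{L^p} \lesssim \|e^{i|D|^\alpha}f\|_{L^p} \le \|e^{i|D|^\alpha}\|_{M^{p,q}_s\to L^p}\,\|f\|_{M^{p,q}_s} \lesssim \langle k\rangle^s\,\|e^{i|D|^\alpha}\|_{M^{p,q}_s\to L^p}\,\|g\|_{L^p}.
\]
Taking the supremum over $g \in {\mathcal S}$ with $\|g\|_{L^p}=1$, then multiplying by $\langle k\rangle^{-s}$ and taking the supremum over $k \in {\mathbf Z}^n$, yields the claim.

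The one place where the argument could go wrong if handled loosely is the uniformity in $k$ of every implied constant: that the number of overlapping bands is bounded by a dimensional constant, that $\langle j\rangle\approx\langle k\rangle$ on those bands, and that the $L^p$-operator norms of $\varphi(D-j)$ and $\widetilde\varphi(D-k)$ are controlled by the \emph{fixed} quantities $\|\varphi^\vee\|_{L^1}$ and $\|\widetilde\varphi^\vee\|_{L^1}$ independently of $j$ and $k$. Granting this uniformity, the frequency localization via $\widetilde\varphi$ does all the work, reducing the full $M^{p,q}_s\to L^p$ bound to the single-band estimate.
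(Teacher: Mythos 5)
Your proof is correct and takes essentially the same approach as the paper's: both test the hypothesis on inputs frequency-localized near $k$, on which the $M^{p,q}_s$ norm collapses, by the bounded overlap of the bands and Young's inequality with constants uniform in $k$, to $\approx \langle k\rangle^{s}\|\cdot\|_{L^p}$. The only cosmetic difference is that the paper localizes with $\varphi(D-k)f$ itself, using the identity $\varphi(\cdot-k)=\sum_{|\ell|\le N}\varphi(\cdot-k)\,\varphi(\cdot-(k+\ell))$ and the commutativity of Fourier multipliers, whereas you insert a fattened cutoff $\widetilde\varphi(D-k)$ and control the outer $\varphi(D-k)$ by Young's inequality.
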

\begin{proof}

Let  $N$ be a positive integer
such that
\begin{equation*}
\varphi(\cdot - k)
= \sum_{|\ell| \leq N} \varphi(\cdot-k) \varphi(\cdot - (k+\ell))
\end{equation*}
for all $k \in {\mathbf Z}^n.$
Then
we have
\begin{align*}
&\| \varphi(D-k)e^{i|D|^\alpha} f \|_{L^p}  \\
&\leq \| e^{i|D|^\alpha} \|_{M^{p,q}_s \to L^p} \| \varphi(D-k) f \|_{M^{p,q}_s}\\
&= \| e^{i|D|^\alpha} \|_{M^{p,q}_s \to L^p}
\Big( \sum_{m \in {\mathbf Z}^n}  \langle m \rangle^{sq} 
\| \varphi(D- m)
\varphi(D-k) f \|_{L^p}^q \Big)^{1/q}
\\
&= \| e^{i|D|^\alpha} \|_{M^{p,q}_s \to L^p}
\Big( \sum_{|\ell| \leq N}  \langle k+\ell \rangle^{sq} 
\| \varphi(D- (k+\ell))
\varphi(D-k)f \|_{L^p}^q \Big)^{1/q}
\\
& \lesssim  \langle k \rangle^s
\| e^{i|D|^\alpha} \|_{M^{p,q}_s \to L^p}
\| f \|_{L^p}
\end{align*}
for $f \in {\mathcal S}({\mathbf R}^n)$ and $q \not= \infty$.
We have easily the same conclusion for $q= \infty$.
Hence, we obtain the desired result.

\end{proof}

\begin{remark}\label{equivalence}
We remark that
since
\begin{align*}
\| e^{i|D|^\alpha} \|_{M^{p,\widetilde{q}}_s \to M^{p,\widetilde{q}}}
\approx \sup_{k \in {\mathbf Z}^n}
\langle k \rangle^{-s}
\| \varphi(D-k) e^{i|D|^\alpha} \|_{L^p \to L^p} 
\end{align*}
for  $1 \leq p, \widetilde{q} \leq \infty,s \in {\mathbf R}  $
(see \cite[Lemma 2.2]{Miyachi-NicolaRivetti-Tabacco-Tomita}),
we have
\begin{equation*}
\| e^{i|D|^\alpha} \|_{M^{p,\tilde{q}}_s \to M^{p,\tilde{q}}}
\lesssim
\| e^{i|D|^\alpha} \|_{M^{p,q}_s \to L^p}
\end{equation*}
for all $1 \leq  p, q, \tilde{q} \leq \infty.$
\end{remark}

Now, we prove Theorem \ref{MAIN THEOREM 5}.

\begin{proof}[Proof of Theorem \ref{MAIN THEOREM 5}]
Since the latter
is just the dual statement of the former,
we prove only the former.
Suppose that $e^{i|D|^\alpha}$
is bounded from $M^{p,q}_s({\mathbf R}^n)$
to $L^p({\mathbf R}^n)$:
\begin{equation}
\| e^{i|D|^\alpha} f \|_{L^p} \lesssim \| f \|_{M^{p,q}_s}, 
\quad f \in {\mathcal S}({\mathbf R}^n). \label{main estimate}
\end{equation}
\noindent
(i) Let $q \leq \min(p,p^\prime)$.
By the necessary condition of Theorem C and Remark \ref{equivalence},
we have
$s \geq (\alpha -2)n|1/p-1/2|$.
Since $\nu_2(p,q)=0$, we obtain the desired result.

\noindent
(ii)
Let $1 \leq p \leq 2$ and $p \leq q  \leq p^\prime$.
Note that  inequality
 \eqref{main estimate} 
can be written as
\begin{equation}\label{ASSUME 2}
\| e^{i|D|^\alpha} \langle D \rangle^{-s} f \|_{L^p}
\lesssim \| f \|_{M^{p,q}}, \quad f \in {\mathcal S} ({\mathbf R}^n)
\end{equation}
by the lifting property.
Here, we denote
$\langle D \rangle^{-s} f = (\langle \cdot \rangle^{-s} \widehat{f}(\cdot))^\vee$
for  $s \in {\mathbf R}$.

Let $g \in {\mathcal S}({\mathbf R}^n)$ be
such that
\begin{equation}
{\rm supp~} \widehat{g} \subset \{\xi~|~  2^{-1} <|\xi| < 2\} 
~~~ {\rm and} ~~~ \widehat{g}(\xi)=1 ~{\rm on}~ \{ \xi~|~  2^{-1/2} <|\xi| < 2^{1/2} \},  \label{CONDITION g}
\end{equation}
and test  \eqref{ASSUME 2} with a specific $f= U_\lambda g$,
$\lambda \geq 1$.
Since 
$$e^{i|D|^\alpha} \langle D \rangle^{-s} U_\lambda g
= U_\lambda (e^{i| \lambda D|^\alpha} \langle \lambda D \rangle^{-s}g),$$
it follows 
from Theorem \ref{DILATION} that
$$
\lambda^{-n/p} \| e^{i|\lambda D|^\alpha} \langle \lambda D \rangle^{-s} g \|_{L^p}
\lesssim \lambda^{n \mu_1(p,q)} \| g \|_{M^{p,q}}.
$$

On the other hand, by the change of variable
$x \mapsto \lambda^\alpha x$ and the
method of stationary phase,
we obtain
\begin{align*}
 \| e^{i|\lambda D|^\alpha} \langle \lambda D \rangle^{-s} g \|_{L^p}
&= \left\| \int_{{\mathbf R}^n} e^{ix \cdot \xi + i |\lambda \xi|^\alpha}
\langle \lambda \xi \rangle^{-s} \widehat{g} (\xi) d  \xi \right\|_{L^p} \\
&= \lambda^{\alpha n/p} \left\| \int_{{\mathbf R}^n} e^{i \lambda^\alpha (x \cdot \xi +  |\xi|^\alpha)}
\langle \lambda \xi \rangle^{-s} \widehat{g} (\xi) d  \xi \right\|_{L^p} \\
& \gtrsim \lambda^{\alpha n/p - \alpha n/2 -s}.
\end{align*}
Combining these two estimates, we get
$$
\lambda^{n \mu_1(p,q) +n/p - \alpha n/p +\alpha n/2 +s}
\gtrsim 1
$$
for all $\lambda \geq 1$.
Letting $\lambda  \to \infty$ yields the 
necessary condition
\begin{align*}
s &\geq -n(1/q-1)
- n/p +\alpha n/p - \alpha n/2 \\
&=(\alpha -2)n(1/p-1/2)
+ n/p-n/q \\
&= (\alpha -2)n|1/p-1/2| -n \nu_2(p,q),
\end{align*}
since $\mu_1(p,q)=1/q-1$ and $\nu_2(p,q)= -1/p+1/q$.

\noindent
(iii)
Let $2 \leq p \leq \infty$ and $p^\prime \leq q  \leq p$.
By duality, we have
$$
\| e^{-i|D|^\alpha} f \|_{L^{p^\prime}}
\lesssim \| f \|_{M^{p^\prime,q^\prime}_{-s}},
\quad f \in {\mathcal S}({\mathbf R}^n).
$$
So, we have only to prove
the following lemma.

\begin{lemma}\label{Lps-Mpq estimate}
Let $1 \leq p^\prime \leq 2, p^\prime \leq q^\prime \leq p$
and $s \in {\mathbf R}$.
If $e^{i|D|^\alpha}$ is bounded from
$L^{p^\prime}_s({\mathbf R}^n)$ to $M^{p^\prime,q^\prime}({\mathbf R}^n)$,
then $s \geq (\alpha -2) n|1/p-1/2| -n \nu_2(p,q)$.

\end{lemma}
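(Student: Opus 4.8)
The statement to prove is Lemma~\ref{Lps-Mpq estimate}, which is precisely the boundedness problem from case (iii) recast via duality, now with the roles of $p$ and $p'$, $q$ and $q'$ exchanged. The plan is to mimic exactly the dilation-plus-stationary-phase argument that was carried out in part (ii) of the proof of Theorem~\ref{MAIN THEOREM 5}, but applied to the operator $e^{i|D|^\alpha}$ acting from $L^{p'}_s$ to $M^{p',q'}$ rather than from $M^{p,q}_s$ to $L^p$. First I would rewrite the hypothesized boundedness $\|e^{i|D|^\alpha}f\|_{M^{p',q'}} \lesssim \|f\|_{L^{p'}_s}$ in the lifted form $\|e^{i|D|^\alpha}\langle D\rangle^{-s} f\|_{M^{p',q'}} \lesssim \|f\|_{L^{p'}}$, using the lifting property exactly as in \eqref{ASSUME 2}.

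Next I would test this inequality against the single dilated bump $f=U_\lambda g$ for $\lambda\ge 1$, where $g$ satisfies the frequency-localization condition \eqref{CONDITION g}. The commutation identity $e^{i|D|^\alpha}\langle D\rangle^{-s}U_\lambda g = U_\lambda(e^{i|\lambda D|^\alpha}\langle\lambda D\rangle^{-s}g)$ holds verbatim. The key difference from part (ii) is that the dilation estimate must now be applied on the \emph{modulation-space} side: from Lemma~\ref{DILATION} the $M^{p',q'}$-norm of $U_\lambda(\cdot)$ is bounded \emph{below} by $\lambda^{n\mu_2(p',q')}$ times the $M^{p',q'}$-norm of its argument, while the $L^{p'}$-norm of $U_\lambda g$ scales like $\lambda^{-n/p'}$. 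Thus I expect to obtain
\[
\lambda^{n\mu_2(p',q')}\,\|e^{i|\lambda D|^\alpha}\langle\lambda D\rangle^{-s}g\|_{M^{p',q'}} \lesssim \lambda^{-n/p'}.
\]
Combining this with the stationary-phase lower bound $\|e^{i|\lambda D|^\alpha}\langle\lambda D\rangle^{-s}g\|_{M^{p',q'}} \gtrsim \lambda^{\alpha n/p'-\alpha n/2-s}$ (the analogue of the $L^p$ computation in (ii), valid because the $M^{p',q'}$-norm controls a single frequency-localized piece from below by its $L^{p'}$-norm via Lemma~\ref{inclusion 1}), and letting $\lambda\to\infty$, the exponent inequality collapses to the claimed bound once I substitute the explicit values $\mu_2(p',q')$ and $\nu_2(p,q)$ in the relevant index region.

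The main obstacle I anticipate is the stationary-phase lower bound on the \emph{modulation-space} side rather than on $L^{p'}$. In part (ii) one estimated $\|e^{i|\lambda D|^\alpha}\langle\lambda D\rangle^{-s}g\|_{L^p}$ directly; here I need the same oscillatory-integral asymptotics but measured in $M^{p',q'}$. Since $g$ is supported on a fixed frequency annulus, only finitely many windows $\varphi(D-k)$ see its transform, so the $M^{p',q'}$-norm is comparable to the $L^{p'}$-norm of the single localized piece (Lemma~\ref{inclusion 1}), and the stationary-phase computation transfers. The delicate bookkeeping is to check that the index ranges $1\le p'\le 2$, $p'\le q'\le p$ place $(1/p',1/q')$ in the region where $\mu_2(p',q')=1/q'-1$ and where $\nu_2(p,q)=-1/p+1/q$, so that the final arithmetic $s \ge -n/p' +\alpha n/p' -\alpha n/2 - n\mu_2(p',q')$ simplifies to $(\alpha-2)n|1/p-1/2| - n\nu_2(p,q)$, matching case (iii) and thereby completing the necessity proof for Theorem~\ref{MAIN THEOREM 5}.
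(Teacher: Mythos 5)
Your overall strategy coincides with the paper's proof of Lemma \ref{Lps-Mpq estimate}: test the hypothesized boundedness on $f=U_\lambda g$ with $\widehat{g}$ supported in a fixed annulus as in \eqref{CONDITION g}, use Lemma \ref{DILATION} to bound $\|U_\lambda(e^{i|\lambda D|^\alpha}\langle \lambda D\rangle^{-s}g)\|_{M^{p',q'}}$ from below by $\lambda^{n\mu_2(p',q')}$ times the norm of the argument, bound that norm from below by the single window $k=0$ plus stationary phase, and compare exponents as $\lambda\to\infty$. (Your preliminary lifting step is a cosmetic variant: the paper instead computes $\|U_\lambda g\|_{L^{p'}_s}\approx \lambda^{s-n/p'}$ directly, which is equivalent since $\widehat{g}$ lives on an annulus.)

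However, the bookkeeping step you yourself flag as delicate is wrong as stated, and it is exactly where the conclusion is decided. Under $1\le p'\le 2$ and $p'\le q'\le p$ one has $1/2\le 1/p'$ and $1/q'\le 1/p'$, so $(1/p',1/q')$ lies in $I_3$ and $\mu_2(p',q')=-2/p'+1/q'$; your formula $\mu_2(p',q')=1/q'-1$ is the $I_2$ value, which would require $\max(1/q',1/2)\le 1/p$ and fails for every $p>2$ (the two formulas agree only at $p=2$). Likewise, since $2\le p$ and $p'\le q\le p$, the point $(1/p,1/q)$ lies in $I_2$, so $\nu_2(p,q)=1/p+1/q-1$, not $-1/p+1/q$; you appear to have carried over the region data from case (ii) of the proof of Theorem \ref{MAIN THEOREM 5} (where $p\le 2$) without redoing it for the dual indices. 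In addition, your displayed final inequality $s\ge -n/p'+\alpha n/p'-\alpha n/2-n\mu_2(p',q')$ has the signs of both $n/p'$ and $n\mu_2(p',q')$ flipped: your own chain $\lambda^{n\mu_2(p',q')}\,\lambda^{\alpha n/p'-\alpha n/2-s}\lesssim \lambda^{-n/p'}$ yields $s\ge n\mu_2(p',q')+n/p'+\alpha n(1/p'-1/2)$, which with the correct region values reproduces the paper's computation $s\ge \alpha n(1/p'-1/2)+n/p'+n(-2/p'+1/q')=(\alpha-2)n|1/p-1/2|-n\nu_2(p,q)$; with your values the arithmetic does not return the claimed bound. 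A minor further point: the one-window lower bound $\|h\|_{M^{p',q'}}\ge \|\varphi(D)h\|_{L^{p'}}$ is immediate from the definition of the modulation norm (drop all terms $k\neq 0$), whereas Lemma \ref{inclusion 1}, which you cite for it, states the embeddings $M^{p,\min(p,p')}_s\hookrightarrow L^p_s\hookrightarrow M^{p,\max(p,p')}_s$ and is not the relevant fact.
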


\begin{proof}[Proof of Lemma \ref{Lps-Mpq estimate}]
Set $f= U_\lambda g, \lambda \geq 1$,
where $g$ is a function satisfying
\eqref{CONDITION g}.
Then, by Lemma \ref{DILATION},
we have
\begin{align*}
\| e^{i|D|^\alpha} f \|_{M^{p^\prime,q^\prime}}
&= \| e^{i|D|^\alpha} U_\lambda g \|_{M^{p^\prime,q^\prime}} \\
&= \| U_\lambda(e^{i|\lambda D|^\alpha} g) \|_{M^{p^\prime,q^\prime}}\\
&\gtrsim \lambda^{n \mu_2(p^\prime,q^\prime)} 
\| e^{i|\lambda D|^\alpha} g \|_{M^{p^\prime,q^\prime}}.
\end{align*}
In the same way as (ii), we obtain, by the change of variable
$x \mapsto \lambda^\alpha x$ and the
method of stationary phase, 
\begin{align*}
 \| e^{i|\lambda D|^\alpha}  g \|_{M^{p^\prime,q^\prime}} 
 &= \left( \sum_{k \in {\mathbf Z}^n} \| \varphi(D- k) 
e^{i|\lambda D|^\alpha}  g \|_{L^{p^\prime}}^{q^\prime} \right)^{1/q^\prime} \\
& \gtrsim \left\| \int_{{\mathbf R}^n} \varphi(\xi) e^{i x \cdot \xi + i |\lambda \xi|^\alpha}
 \widehat{g} (\xi) d  \xi \right\|_{L^{p^\prime}} \\
&= \lambda^{\alpha n/p^\prime} \left\| \int_{{\mathbf R}^n} e^{i \lambda^\alpha (x \cdot \xi +  |\xi|^\alpha)}
\varphi(\xi)  \widehat{g} (\xi) d  \xi \right\|_{L^{p^\prime}} \\
& \gtrsim \lambda^{\alpha n/p^\prime - \alpha n/2}.
\end{align*}
Hence, we have
$$
\| e^{i|D|^\alpha} f \|_{M^{p^\prime,q^\prime}} \gtrsim  \lambda^{n \mu_2(p^\prime,q^\prime)}   \lambda^{\alpha n(1/p^\prime-1/2)}.
$$
On the other hand, we have
\begin{align*}
\| f \|_{L^{p^\prime}_s} = \| U_\lambda g \|_{L^{p^\prime}_s}
\approx \lambda^{-n/p^\prime} \lambda^s. 
\end{align*}
Combining these two estimates, we obtain
$$
\lambda^{s 
-n/p^\prime -n \mu_2(p^\prime,q^\prime) - \alpha n(1/p^\prime-1/2)}  \gtrsim  1
$$
for all $\lambda \geq 1$.
Letting $\lambda \to \infty$
yields the necessary condition
\begin{align*}
s &\geq \alpha n(1/p^\prime-1/2)
+n/p^\prime +n (-2/p^\prime+1/q^\prime) \\
&= (\alpha -2) n(1/p^\prime -1/2)
+ 2n (1/p^\prime-1/2) +n/p^\prime +n (-2/p^\prime+1/q^\prime)\\
&= (\alpha -2)n(1/p^\prime-1/2)
+n(1/p^\prime+1/q^\prime -1) \\
&= (\alpha -2)n(1/2-1/p) + n(1-1/p -1/q) \\
&= (\alpha -2) n|1/p-1/2| -n \nu_2(p,q),
\end{align*}
since $\mu_2(p^\prime,q^\prime)=-2/p^\prime +1/q^\prime$
and $\nu_2(p,q)=1/p+1/q-1$.
\end{proof}

\noindent
(iv) Let $2 \leq p \leq \infty$ and $p <q$.
Contrary to our claim,
suppose that
there exists $\varepsilon >0$
such that 
$s= (\alpha -2)n|1/p-1/2| -n \nu_2(p,q) -\varepsilon$
implies \eqref{main estimate}.
Then, by interpolation with the estimate
for a point $Q(1/p_1,1/q_1)$ with $2<p_1<\infty, p^\prime_1 < q_1  < p_1$ and $s=(\alpha -2)n|1/p_1-1/2|
-n \nu_2(p_1,q_1)$ (which holds by Corollary \ref{MAIN THEOREM 4}),
one would obtain an improved estimates of the segment
joining $P(1/p,1/q)$ and $Q(1/p_1,1/q_1)$,
which is not possible.
In the same way as above,
we can treat the case $1 \leq p \leq 2$ and $p^\prime < q$,
and we have the conclusion. 

$$
\unitlength 0.1in
\begin{picture}( 18.4300, 17.8000)(  0.0400,-19.0600)
%
\special{pn 8}%
\special{pa 392 1830}%
\special{pa 392 372}%
\special{fp}%
\special{sh 1}%
\special{pa 392 372}%
\special{pa 372 440}%
\special{pa 392 426}%
\special{pa 412 440}%
\special{pa 392 372}%
\special{fp}%
\special{pa 392 1830}%
\special{pa 1848 1830}%
\special{fp}%
\special{sh 1}%
\special{pa 1848 1830}%
\special{pa 1780 1810}%
\special{pa 1794 1830}%
\special{pa 1780 1850}%
\special{pa 1848 1830}%
\special{fp}%
%
\special{pn 8}%
\special{pa 1686 1830}%
\special{pa 1686 534}%
\special{fp}%
\special{pa 1686 534}%
\special{pa 392 534}%
\special{fp}%
\special{pa 392 534}%
\special{pa 1038 1182}%
\special{fp}%
\special{pa 1038 1182}%
\special{pa 1038 1830}%
\special{fp}%
\special{pa 1038 1182}%
\special{pa 392 1830}%
\special{fp}%
\put(2.2900,-19.9100){\makebox(0,0){$0$}}%
\put(10.3700,-19.9100){\makebox(0,0){$1/2$}}%
\put(16.8500,-19.9100){\makebox(0,0){$1$}}%
\put(20.0800,-19.9100){\makebox(0,0){$1/p$}}%
\put(2.2900,-11.8200){\makebox(0,0){$1/2$}}%
\put(2.2900,-5.3400){\makebox(0,0){$1$}}%
\put(2.2900,-2.1100){\makebox(0,0){$1/q$}}%
\put(5.5200,-12.3100){\makebox(0,0){$Q$}}%
\put(9.4900,-15.7800){\makebox(0,0){$P$}}%
%
\special{pn 13}%
\special{pa 876 1668}%
\special{pa 552 1344}%
\special{fp}%
\end{picture}%
$$

\end{proof}

\subsection{Pseudo-differential operator}

Let $\sigma(x,\xi)$ be a function
on ${\mathbf R}^n \times {\mathbf R}^n$.
Then the pseudo-differential operator $\sigma^W(X,D)$
is  defined by
$$
\sigma^W(X,D)f(x) = \frac{1}{(2 \pi)^n} \iint_{{\mathbf R}^{2n}}
\sigma \left( \frac{x+y}{2},\xi \right) e^{i(x-y) \cdot \xi} f(y) dyd \xi.
$$
We give a condition for $\sigma^W(X,D)$ to be bounded
from $M^{p,q}({\mathbf R}^n)$ to $L^p_s({\mathbf R}^n)$.
To this end, we recall the following fundamental results about 
$\sigma^W(X,D)$
with $\sigma \in M^{\infty,1}({\mathbf R}^{2n})$.

\begin{previousthmD}[Sj\"ostrand \cite{Sjostrand 94},\cite{Sjostrand}]
Let $\sigma \in M^{\infty,1}({\mathbf R}^{2n})$.
Then $\sigma^W(X,D)$ is bounded on $L^2({\mathbf R}^n)$.
Moreover, if $\sigma^W(X,D)$ is invertible on $L^2({\mathbf R}^n)$,
then $(\sigma^W(X,D))^{-1}= \tau^W(X,D)$ for
some $\tau \in M^{\infty,1}({\mathbf R}^{2n})$.

\end{previousthmD}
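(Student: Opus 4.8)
The plan is to prove both assertions through the time-frequency (Gabor) analysis of the Weyl calculus, which is the most transparent route to the Sj\"ostrand class $M^{\infty,1}$. Throughout I fix a nonzero Gaussian window $g \in \mathcal{S}(\mathbf{R}^n)$, write $\pi(z) = M_{\zeta} T_{x}$ (with $z=(x,\zeta)$) for the time-frequency shifts, and let $\Phi = W(g,g) \in \mathcal{S}(\mathbf{R}^{2n})$ denote the Wigner transform of $g$, which serves as a window on phase space. The short-time Fourier transform $V_\Phi \sigma$ then computes the $M^{\infty,1}$ norm via $\|\sigma\|_{M^{\infty,1}} \approx \int_{\mathbf{R}^{2n}} \sup_{z} |V_\Phi \sigma(z,\zeta)|\, d\zeta$, equivalently to the band-decomposition definition used in Section~\ref{Preliminaries}.

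First I would establish the \emph{almost diagonalization} of $\sigma^W(X,D)$ against the Gabor system $\{\pi(z)g\}$. The core computation is the identity
$$\bigl\langle \sigma^W(X,D)\pi(w)g,\ \pi(z)g\bigr\rangle = V_\Phi \sigma\bigl(\tfrac{z+w}{2},\, J(w-z)\bigr),$$
where $J(x,\zeta)=(\zeta,-x)$ is the standard symplectic rotation; this follows by unfolding the Weyl product and recognizing the matrix coefficient of a time-frequency shift as a value of the Wigner transform of $g$. Taking absolute values and a supremum over the average variable $(z+w)/2$ produces a bound
$$\bigl|\langle \sigma^W(X,D)\pi(w)g, \pi(z)g\rangle\bigr| \le H(z-w), \qquad \|H\|_{L^1(\mathbf{R}^{2n})} \lesssim \|\sigma\|_{M^{\infty,1}},$$
so that the Gabor matrix of $\sigma^W(X,D)$ is dominated by convolution with an $L^1$ kernel. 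The $L^2$-boundedness in the first assertion is then immediate from Schur's test applied to this convolution-dominated kernel, combined with the fact that a Gabor frame over a sufficiently fine lattice $\Lambda \subset \mathbf{R}^{2n}$ furnishes bounded analysis and synthesis operators between $L^2(\mathbf{R}^n)$ and $\ell^2(\Lambda)$.

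For the second (spectral invariance) assertion I would pass permanently to the lattice and work with the matrix algebra. The crucial structural point is that the class $\mathcal{C}$ of matrices $(A_{\mu\lambda})_{\mu,\lambda\in\Lambda}$ satisfying $|A_{\mu\lambda}| \le h(\mu-\lambda)$ for some $h\in\ell^1(\Lambda)$ is a Banach $*$-algebra under matrix multiplication, which uses only $\ell^1 * \ell^1 \subset \ell^1$, and that, by the correspondence above, membership of $A$ in $\mathcal{C}$ is equivalent to $A$ being the Gabor matrix of a Weyl operator with symbol in $M^{\infty,1}$. Given that $\sigma^W(X,D)$ is invertible on $L^2$, its Gabor matrix $A$ is invertible on $\ell^2(\Lambda)$, and the inverse operator is represented by $A^{-1}$. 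It therefore suffices to show $A^{-1}\in\mathcal{C}$ and to read off the symbol $\tau\in M^{\infty,1}$ from $A^{-1}$.

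The main obstacle is exactly this last step: the \emph{Wiener-type lemma} asserting that the unweighted convolution-dominated algebra $\mathcal{C}$ is inverse-closed in $\mathcal{B}(\ell^2(\Lambda))$. Unlike the weighted polynomial case, where the Gelfand--Raikov--Shilov condition and Hulanicki's lemma give spectral invariance cheaply, the bare $\ell^1$ off-diagonal decay resists a soft argument; this is precisely Sj\"ostrand's original contribution, carried out by a commutator/interpolation technique and recast later by Baskakov and by Gr\"ochenig--Leinert. I would invoke this inverse-closedness as the analytic heart of the proof, after which translating $A^{-1}$ back into a Weyl symbol $\tau$ and verifying $\tau\in M^{\infty,1}$ is routine bookkeeping via the frame correspondence.
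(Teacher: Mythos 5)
The paper contains no proof of Theorem D: it is imported verbatim from Sj\"ostrand \cite{Sjostrand 94}, \cite{Sjostrand} as a black box for Theorem \ref{Last Theorem}, so your attempt can only be measured against the literature. What you outline is not Sj\"ostrand's original argument (a phase-space decomposition into unit cubes with a commutator/iteration scheme) but Gr\"ochenig's time-frequency proof \cite{Grochenig 2006}. Most of it is sound: the almost-diagonalization identity $\bigl|\langle \sigma^W(X,D)\pi(w)g,\pi(z)g\rangle\bigr|=\bigl|V_\Phi\sigma\bigl(\tfrac{z+w}{2},J(w-z)\bigr)\bigr|$ is correct (your version omits a unimodular phase factor, which is harmless since only absolute values are used), the convolution-dominated bound with $\|H\|_{L^1}\lesssim\|\sigma\|_{M^{\infty,1}}$ does give $L^2$-boundedness via Schur/Young together with the mapping properties of Gabor analysis and synthesis, and the equivalence between membership of the Gabor matrix in the convolution-dominated class $\mathcal{C}$ and $\sigma\in M^{\infty,1}$ is exactly the tool needed to read off $\tau$ at the end. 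Invoking the Wiener-type lemma (inverse-closedness of $\mathcal{C}$ in $\mathcal{B}(\ell^2(\Lambda))$) without proof is defensible, since it is an independent result of Baskakov and of Gr\"ochenig--Leinert and you correctly flag it as the analytic heart; but be aware that with that step external, your text is a reduction of Sj\"ostrand's theorem to that lemma, not a self-contained proof.

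There is, however, one step that fails as written: \emph{``its Gabor matrix $A$ is invertible on $\ell^2(\Lambda)$.''} It is not. Writing $C$ for the analysis operator of a (normalized, Parseval) Gabor frame, the matrix $A=C\,\sigma^W(X,D)\,C^*$ annihilates $(\operatorname{ran} C)^{\perp}$, and this subspace is never trivial here: a Gabor frame with window $g\in\mathcal{S}({\mathbf R}^n)$ is necessarily overcomplete, since at critical density the Balian--Low theorem forbids a Riesz basis with such a window. So $A$ always has a nontrivial kernel, and the clause ``the inverse operator is represented by $A^{-1}$'' is meaningless on $\ell^2(\Lambda)$; as stated, the Wiener lemma cannot even be applied. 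The standard repair, which your argument must include: let $P=CC^*$ be the orthogonal projection onto $\operatorname{ran} C$, check that $P\in\mathcal{C}$ (its entries are, up to phases, samples of $V_g g$, which decays rapidly), and apply inverse-closedness to $A+(I-P)$, which \emph{is} invertible on all of $\ell^2(\Lambda)$ when $\sigma^W(X,D)$ is invertible on $L^2({\mathbf R}^n)$; the relevant inverse is then $P\bigl(A+(I-P)\bigr)^{-1}P\in\mathcal{C}$, and only from this does the frame correspondence yield $\tau\in M^{\infty,1}({\mathbf R}^{2n})$ with $\tau^W(X,D)=(\sigma^W(X,D))^{-1}$. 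With that correction, your outline coincides with the published time-frequency proof.
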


\begin{previousthmE}[Gr\"ochenig-Heil \cite{Grochenig-Heil}]
Let $1 \leq p,q \leq \infty$ and $\sigma \in M^{\infty,1}({\mathbf R}^{2n})$. Then
$\sigma^W(X,D)$ is bounded on $M^{p,q}({\mathbf R}^n)$.
\end{previousthmE}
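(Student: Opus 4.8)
The plan is to prove the boundedness by time-frequency methods, combining the description of modulation spaces through Gabor frame coefficients with an almost-diagonalization of the Weyl transform. Throughout, fix a window $g \in {\mathcal S}({\mathbf R}^n)\setminus\{0\}$, write $\pi(z)=M_\eta T_x$ for the time-frequency shift attached to $z=(x,\eta)\in{\mathbf R}^{2n}$ (so $T_x$ is translation and $M_\eta$ modulation), and let $V_g f(z)=\langle f,\pi(z)g\rangle$ denote the short-time Fourier transform. The goal is to show that in Gabor coordinates $\sigma^W(X,D)$ is represented by a matrix that is bounded on every mixed-norm sequence space.

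First I would recall the two standard facts I will lean on. On the continuous side, $\|f\|_{M^{p,q}}\approx\|V_g f\|_{L^{p,q}}$; on the discrete side, for a sufficiently fine lattice $\Lambda=\alpha{\mathbf Z}^n\times\beta{\mathbf Z}^n$ the system $\{\pi(\lambda)g\}_{\lambda\in\Lambda}$ is a Gabor frame, so that $\|f\|_{M^{p,q}}\approx\|\{\langle f,\pi(\lambda)g\rangle\}_{\lambda\in\Lambda}\|_{\ell^{p,q}(\Lambda)}$ for every $1\le p,q\le\infty$. Hence it suffices to bound the Gabor matrix of $\sigma^W(X,D)$ on each $\ell^{p,q}(\Lambda)$.

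The crux is the kernel estimate identifying the Gabor matrix entries with the short-time Fourier transform of the symbol. Starting from the identity $\langle\sigma^W(X,D)\pi(z)g,\pi(w)g\rangle=\langle\sigma,W(\pi(w)g,\pi(z)g)\rangle$ relating matrix entries to the cross-Wigner distribution, together with the covariance relation (valid up to a unimodular phase) $W(\pi(w)g,\pi(z)g)=\pi'\big(\tfrac{w+z}{2},J(w-z)\big)\,\Phi$, where $\Phi=W(g,g)\in{\mathcal S}({\mathbf R}^{2n})$, $\pi'$ is the time-frequency shift on ${\mathbf R}^{2n}$ and $J$ the standard symplectic matrix, the entries become values of the STFT of $\sigma$:
\[
|\langle\sigma^W(X,D)\pi(z)g,\pi(w)g\rangle|=\big|V_\Phi\sigma\big(\tfrac{w+z}{2},J(w-z)\big)\big|.
\]
Setting $H(\zeta)=\sup_{u\in{\mathbf R}^{2n}}|V_\Phi\sigma(u,\zeta)|$, the very definition of $M^{\infty,1}({\mathbf R}^{2n})$ gives $H\in L^1({\mathbf R}^{2n})$ with $\|H\|_{L^1}\approx\|\sigma\|_{M^{\infty,1}}$ (the norm being independent of the Schwartz window), so the entries are dominated by $H(J(w-z))$, i.e. by a convolution kernel in $L^1$ of the difference variable.

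Finally I would transfer this to $\Lambda$ and conclude. Sampling or majorizing $H$ produces a sequence $h\in\ell^1(\Lambda)$ with $|\langle\sigma^W(X,D)\pi(\mu)g,\pi(\lambda)g\rangle|\lesssim h(\lambda-\mu)$, so the Gabor matrix of $\sigma^W(X,D)$ is convolution-dominated. A Schur-type test---equivalently Young's inequality for the mixed-norm spaces---shows that convolution with an $\ell^1$ sequence is bounded on every $\ell^{p,q}(\Lambda)$, with norm controlled by $\|h\|_{\ell^1}\lesssim\|\sigma\|_{M^{\infty,1}}$. Combined with the frame equivalence of norms from the first step, this yields $\|\sigma^W(X,D)f\|_{M^{p,q}}\lesssim\|\sigma\|_{M^{\infty,1}}\|f\|_{M^{p,q}}$ for all $1\le p,q\le\infty$. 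I expect the main obstacle to be precisely this kernel estimate: making the Wigner covariance identity exact and checking that the substitution $(w,z)\mapsto(\tfrac{w+z}{2},J(w-z))$ preserves the $L^\infty$-in-position, $L^1$-in-frequency structure, so that the $M^{\infty,1}$ hypothesis converts \emph{exactly} into $\ell^1$ off-diagonal decay of the matrix.
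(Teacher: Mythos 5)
The paper offers no proof of Theorem E---it is quoted verbatim from Gr\"ochenig--Heil \cite{Grochenig-Heil}---and your argument is essentially the standard proof of that cited result: almost-diagonalization of the Weyl operator via the identity $\langle\sigma^W(X,D)\pi(z)g,\pi(w)g\rangle=\langle\sigma,W(\pi(w)g,\pi(z)g)\rangle$, the covariance of the cross-Wigner distribution converting the $M^{\infty,1}$ hypothesis into an $L^1$ convolution-dominated Gabor matrix, and Young's inequality on $\ell^{p,q}(\Lambda)$. The steps you gloss (that the dual window can be taken in $M^1$ so the frame norm equivalences hold for all $1\le p,q\le\infty$, and that the sampled envelope $h$ lies in $\ell^1$ because $V_\Phi\sigma$ belongs to the amalgam space $W(C,L^{\infty,1})$) are standard and fillable, so the proposal is correct and faithful to the source.
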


Now, we state our result which is an analog of
Theorems \ref{THEOREM 0<a<2} and \ref{THEOREM 2<a<infty}.

\begin{theorem}\label{Last Theorem}
Let $1 \leq p,q \leq \infty, s \in {\mathbf R}^n$
and $\sigma \in M^{\infty,1}({\mathbf R}^{2n})$.
\begin{enumerate}
\item[$(1)$]  If $M^{p,q}({\mathbf R}^n) \hookrightarrow L^p_s({\mathbf R}^n)$,
then $\sigma^W(X,D)$ is bounded from $M^{p,q}({\mathbf R}^n)$ to $L^p_s({\mathbf R}^n)$.

\item[$(2)$]  Suppose that $\sigma^W(X,D)$ is invertible on $L^2({\mathbf R}^n)$.
If $\sigma^W(X,D)$ is bounded from
$M^{p,q}({\mathbf R}^n)$ to $L^p_s({\mathbf R}^n)$,
then $\mathcal{M}^{p,q}({\mathbf R}^n)
 \hookrightarrow L^p_s({\mathbf R}^n)$.

\end{enumerate}

\end{theorem}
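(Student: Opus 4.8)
The plan is to derive both parts by composing the operator $\sigma^W(X,D)$ with the hypothesized embedding, using the Gr\"ochenig--Heil boundedness (Theorem E) throughout and, for the second part, the Sj\"ostrand inversion (Theorem D).

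For part $(1)$ I would argue by a one-line composition. Since $\sigma \in M^{\infty,1}({\mathbf R}^{2n})$, Theorem E gives that $\sigma^W(X,D)$ is bounded on $M^{p,q}({\mathbf R}^n)$. Combined with the assumed embedding $M^{p,q}({\mathbf R}^n)\hookrightarrow L^p_s({\mathbf R}^n)$, for $f\in{\mathcal S}({\mathbf R}^n)$ we obtain
\[
\|\sigma^W(X,D)f\|_{L^p_s}\lesssim \|\sigma^W(X,D)f\|_{M^{p,q}}\lesssim\|f\|_{M^{p,q}},
\]
which is exactly the claimed boundedness from $M^{p,q}({\mathbf R}^n)$ to $L^p_s({\mathbf R}^n)$.

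For part $(2)$ the plan is to invert and then compose in the reverse order. By Theorem D, the invertibility of $\sigma^W(X,D)$ on $L^2({\mathbf R}^n)$ yields $(\sigma^W(X,D))^{-1}=\tau^W(X,D)$ for some $\tau\in M^{\infty,1}({\mathbf R}^{2n})$; in particular $\sigma^W(X,D)\tau^W(X,D)=\mathrm{id}$ on $L^2({\mathbf R}^n)$. The assumed boundedness $\sigma^W(X,D)\colon M^{p,q}\to L^p_s$ extends, by density of ${\mathcal S}({\mathbf R}^n)$, to a bounded map ${\mathcal M}^{p,q}({\mathbf R}^n)\to L^p_s({\mathbf R}^n)$. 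For $f\in{\mathcal S}({\mathbf R}^n)$ I would then factor $f=\sigma^W(X,D)\bigl(\tau^W(X,D)f\bigr)$ and estimate
\[
\|f\|_{L^p_s}=\|\sigma^W(X,D)\tau^W(X,D)f\|_{L^p_s}\lesssim\|\tau^W(X,D)f\|_{M^{p,q}}\lesssim\|f\|_{M^{p,q}},
\]
the last step again by Theorem E applied to $\tau$. Since ${\mathcal S}({\mathbf R}^n)$ is dense in ${\mathcal M}^{p,q}({\mathbf R}^n)$ by definition, this inequality propagates to all of ${\mathcal M}^{p,q}({\mathbf R}^n)$ and yields ${\mathcal M}^{p,q}({\mathbf R}^n)\hookrightarrow L^p_s({\mathbf R}^n)$.

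The main obstacle is making the factorization $f=\sigma^W(X,D)\tau^W(X,D)f$ rigorous, since $\tau^W(X,D)f$ need not be Schwartz, so the defining inequality for $\sigma^W(X,D)$ (a priori valid only on ${\mathcal S}({\mathbf R}^n)$) cannot be applied to it directly; this is a genuine issue when $p=\infty$ or $q=\infty$, where ${\mathcal S}({\mathbf R}^n)$ fails to be dense in $M^{p,q}({\mathbf R}^n)$ and the distinction between ${\mathcal M}^{p,q}$ and $M^{p,q}$ is essential. I would resolve it using the Feichtinger algebra $M^{1,1}({\mathbf R}^n)$ as a dense, operator-invariant intermediate space: one has ${\mathcal S}({\mathbf R}^n)\subset M^{1,1}({\mathbf R}^n)\subset{\mathcal M}^{p,q}({\mathbf R}^n)$ for every $1\le p,q\le\infty$ (the last inclusion because $M^{1,1}={\mathcal M}^{1,1}$ embeds continuously into $M^{p,q}$), and by Theorem E with $p=q=1$ the operator $\tau^W(X,D)$ maps $M^{1,1}({\mathbf R}^n)$ into itself. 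Hence $\tau^W(X,D)f\in M^{1,1}({\mathbf R}^n)\subset{\mathcal M}^{p,q}({\mathbf R}^n)$ for $f\in{\mathcal S}({\mathbf R}^n)$, the extended $\sigma^W(X,D)$ applies, and a short approximation argument (convergence in $L^p_s$ and in ${\mathcal S}'({\mathbf R}^n)$ forces the limit to equal $f$) identifies the value of the extension with $f$, closing the estimate.
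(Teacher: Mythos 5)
Your proof follows essentially the same route as the paper: part (1) is the identical one-line composition of Theorem E with the assumed embedding, and part (2) uses exactly the paper's factorization $f=\sigma^W(X,D)\,\tau^W(X,D)f$ via Theorem D together with the boundedness of $\tau^W(X,D)$ on modulation spaces. The only difference is that you explicitly justify applying the hypothesized bound to the non-Schwartz function $\tau^W(X,D)f$ through the intermediate space $M^{1,1}({\mathbf R}^n)$ and a density argument, a point the paper passes over silently (it simply writes the chain of estimates for $f\in{\mathcal S}({\mathbf R}^n)\subset L^2({\mathbf R}^n)$), so your write-up is, if anything, more careful than the original.
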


\begin{proof}
(1) 
Since $\sigma^W(X,D)$ is bounded on $M^{p,q}({\mathbf R}^n)$ by Theorem E,
we have
$$
\| \sigma^W(X,D) f \|_{L^p_s} \lesssim \| \sigma^W(X,D)f \|_{M^{p,q}}
\lesssim \| f \|_{M^{p,q}},
$$
which means $\sigma^W(X,D)$ is bounded from $M^{p,q}({\mathbf R}^n)$ to $L^p_s({\mathbf R}^n)$.

\noindent
(2) Since $(\sigma^W(X,D))^{-1}
= \tau^W(X,D)$ for some $\tau \in M^{\infty,1}({\mathbf R}^{2n})$
and $\tau^W(X,D)$ is bounded on $M^{p,q}({\mathbf R}^n)$ by Theorems D and E,
we obtain
$$
\| f \|_{L^p_s} = \| \sigma^W(X,D) \tau^W(X,D) f \|_{L^p_s}
\lesssim \|  \tau^W(X,D) f \|_{M^{p,q}}
\lesssim \| f \|_{M^{p,q}}
$$
for $f\in{\mathcal{S}}({\mathbf{R}}^n)\subset L^2({\mathbf{R}}^n)$,
which means $\mathcal{M}^{p,q}({\mathbf R}^n)
 \hookrightarrow L^p_s({\mathbf R}^n)$.

\end{proof}

If we combine Theorem \ref{Last Theorem} and Theorem \ref{MAIN THEOREM},
we have similar results to Corollaries \ref{cor5.2} and \ref{MAIN THEOREM 4}.
We will not state them explicitly but it is just a straightforward task.





\bibliographystyle{elsarticle-num}
\bibliography{<your-bib-database>}







\end{document}